\subjclass[2010]{Primary 14C17, 14C20, 14F06, 14H60, 14N05}
\definecolor{shadecolor}{gray}{0.875}
\newtheorem{thrm}{Theorem}[section]
\newtheorem{lem}[thrm]{Lemma}
\newtheorem{cor}[thrm]{Corollary}
\newtheorem{prop}[thrm]{Proposition}
\theoremstyle{definition}
\newtheorem{defn}[thrm]{Definition}
\newtheorem{ex}[thrm]{Example}
\newtheorem{rmk}[thrm]{Remark}
\newtheorem{ques}[thrm]{Question}
\newtheorem{conv}[thrm]{Convention}
\DeclareMathOperator{\Sym}{Sym}
\DeclareMathOperator{\Eff}{\overline{Eff}}
\DeclareMathOperator{\Nef}{{Nef}}
\DeclareMathOperator{\ch}{ch}
\DeclareMathOperator{\td}{Td\,}
\DeclareMathOperator{\Spec}{Spec}
\let\cal\mathcal
\let\bb\mathbb
\DeclareMathOperator{\End}{{\cal E}nd\,}
\DeclareMathOperator{\sgn}{sgn\,}
\newcommand{\factor}[2]{\left. \raise 1pt\hbox{\ensuremath{#1}} \right/
        \hskip -2pt\raise -3pt\hbox{\ensuremath{#2}}}
\numberwithin{equation}{thrm}
\begin{document}

\title{Positivity vs. slope semistability for bundles with vanishing discriminant}
\author{Mihai Fulger}
\address{Department of Mathematics, University of Connecticut, Storrs, CT 06269-1009, USA}
\address{Institute of Mathematics of the Romanian Academy, P. O. Box 1-764, RO-014700,
Bucharest, Romania}
\email{mihai.fulger@uconn.edu}
\author{Adrian Langer}
\address{Institute of Mathematics, University of Warsaw,
	ul.\ Banacha 2, 02-097 Warszawa, Poland}
\email{alan@mimuw.edu.pl}
\thanks{The first author was partially supported by the Simons Foundation
	Collaboration Grant 579353. The second author was partially supported by Polish National Centre (NCN) contract numbers 2018/29/B/ST1/01232 and 2021/41/B/ST1/03741.}

\maketitle


\begin{abstract}It is known that a strongly slope semistable bundle with vanishing discriminant is nef if and only if its determinant is nef. We give an algebraic proof of this result in all characteristics and generalize it to arbitrary proper schemes. 
We also address a question of S. Misra.
\end{abstract}

\section{Introduction}

Let $\cal E$ be a vector bundle of rank $r$ on a smooth projective variety $X$ defined over an algebraically closed field. Inspired by the case of line bundles, one might hope that the positivity of $\cal E$ is determined by the positivity of its characteristic classes. The bundle $\cal O_{\bb P^1}(n)\oplus\cal O_{\bb P^1}(-n)$ is an easy counterexample. To rectify this, one adds stability assumptions on $\cal E$.

Let $H$ be an ample polarization on $X$. In characteristic zero, say that $\cal E$ is \emph{strongly slope semistable with respect to $H$} if it is slope semistable in the usual sense. In positive characteristic, say that $\cal E$ is \emph{strongly slope semistable with respect to $H$} if $\cal E$ and all its iterated Frobenius pullbacks are slope semistable. 

On curves the polarization is irrelevant and we just say that $\cal E$ is \emph{strongly semistable}. Here the connection between positivity and strong semistability is well known by work of Hartshorne \cite{har71}, Barton \cite{Barton71}, and Miyaoka \cite{Miyaoka}. A strongly semistable bundle $\cal E$ on a curve is ample (or just nef) if and only if $\int_X c_1(\cal E)>0$ (resp.~ $\geq 0$). On surfaces, even on $\bb P^2$, it is not sufficient. See Example \ref{ex:notnef}.
Furthermore, on curves $\cal E$ is strongly semistable if and only if the twisted normalized bundle $\cal E\langle-\frac 1r\det\cal E\rangle$ is nef (equivalently $\cal E{\rm nd}\, \cal E$ is nef). 

A link in codimension two between semistability and positivity comes from the famous Bogomolov inequality. The discriminant of $\cal E$ is $\Delta(\cal E)=2rc_2(\cal E)-(r-1)c_1^2(\cal E)$. Assume that $\cal E$ is strongly slope semistable with respect to some ample polarization. The classical form of the inequality states that if $X$ is a surface, then the degree of the discriminant of $\cal E$
is non-negative. When $X$ has arbitrary dimension, the Mehta--Ramanathan theorem \cite{MehtaRamanathan} implies that $\Delta(\cal E)$ has nonnegative degree with respect to any polarization. 

Note that $\Delta(\cal E)=0$ on curves. This suggests a close connection between (strong) semistability and positivity for vector bundles that are extremal with respect to the Bogomolov inequality, meaning that $\Delta(\cal E)$ is numerically trivial.
We have the following known results:

\begin{thrm}\label{thm:main}
	Let $X$ be a smooth projective variety of dimension $n$, and let $H$ be an ample class on $X$. Let $\cal E$ be a reflexive sheaf of rank $r$ on $X$. The following are equivalent:
	\begin{enumerate}
		\item $\cal E$ is strongly slope semistable with respect to $H$, and $\Delta(\cal E)\cdot H^{n-2}=0$.
		\item $\cal E$ is locally free and $\cal E{\rm nd}\, \cal E$ is nef.
		\item $\cal E$ is universally semistable (see below).
	\end{enumerate} 
In particular, if $\cal E$ is strongly slope semistable with respect to $H$, and $\Delta(\cal E)\cdot H^{n-2}=0$, then $\cal E$ is a nef (resp.~ample) vector bundle if and only if $\det\cal E$ is nef (resp.~ample).
\end{thrm}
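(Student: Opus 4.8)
The backbone of the argument is a pair of reductions to curves. First, nefness of a vector bundle is detected by pullback to smooth curves: for a bundle $\cal G$ on $X$, $\cal G$ is nef if and only if $g^*\cal G$ is nef for every morphism $g\colon B\to X$ from a smooth projective curve. Indeed, a curve in $\bb P(\cal G)$ either lies in a fibre, where $\cal O_{\bb P(\cal G)}(1)$ is ample, or dominates its image curve in $X$; after normalizing, the latter is realized as a line--bundle quotient of the corresponding $g^*\cal G$, whose degree is $\ge 0$ precisely when $g^*\cal G$ is nef. Second, on a smooth projective curve $B$ one has that $\End W$ is nef if and only if $W$ is strongly semistable (the curve case recalled in the introduction). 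Combining the two, for any bundle $\cal E$ on $X$,
\[
\End\cal E\ \text{is nef}\iff g^*\cal E\ \text{is strongly semistable for every}\ g\colon B\to X.
\]
I also record the numerics $c_1(\End\cal E)=0$ and $\ch_2(\End\cal E)=-\Delta(\cal E)$, so $c_2(\End\cal E)=\Delta(\cal E)$ and $\End\cal E=\cal E\otimes\cal E^\vee$ is self-dual; and the fact that a numerically flat bundle (one that is nef together with its dual) has numerically trivial Chern classes.

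\textbf{(2)$\Rightarrow$(1).} Assume $\cal E$ is locally free with $\End\cal E$ nef. Since $\End\cal E$ is self-dual, both it and its dual are nef, so $\End\cal E$ is numerically flat; hence all its Chern classes are numerically trivial, and in particular $\Delta(\cal E)=c_2(\End\cal E)\equiv 0$, giving $\Delta(\cal E)\cdot H^{n-2}=0$. For semistability, if $\cal E$ admitted a torsion-free quotient $\cal Q$ with $\mu_H(\cal Q)<\mu_H(\cal E)$, then $\cal E^\vee\otimes\cal Q$ would be a quotient sheaf of $\End\cal E$ of slope $\mu_H(\cal Q)-\mu_H(\cal E)<0$, contradicting that a quotient of a nef bundle cannot have negative slope. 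Thus $\cal E$ is slope semistable; applying the same reasoning to $F^*\cal E$, whose endomorphism bundle $F^*\End\cal E$ is again nef, shows every Frobenius pullback is semistable, so $\cal E$ is strongly slope semistable. This is (1).

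\textbf{(1)$\Rightarrow$(2), the main obstacle.} Local freeness comes first. Restrict $\cal E$ to a general complete-intersection threefold $Y=D_1\cap\dots\cap D_{n-3}$ with $D_i\in|m_iH|$. For general $Y$ this is smooth, meets the non-locally-free locus of the reflexive sheaf $\cal E$ (of codimension $\ge 3$) in finitely many points, and $\cal E|_Y$ is strongly semistable with $\Delta(\cal E|_Y)\cdot H|_Y$ proportional to $\Delta(\cal E)\cdot H^{n-2}=0$ by Mehta--Ramanathan. As each non-locally-free point of a reflexive sheaf on a smooth threefold contributes strictly positively to $c_2\cdot H$, and hence to $\Delta\cdot H$, the vanishing forces $\cal E|_Y$ locally free; letting $Y$ sweep out $X$ gives that $\cal E$ is locally free. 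For nefness of $\End\cal E$, by the backbone it suffices to show $g^*\cal E$ is strongly semistable for every $g\colon B\to X$, and composing with Frobenius powers on $B$ reduces this to semistability of $g^*\cal E$ for every such $g$. For a general complete-intersection curve of large degree this is exactly Mehta--Ramanathan; \emph{the essential difficulty is to upgrade it to all morphisms $g$}, since an arbitrary curve may be special and the hypothesis $\Delta(\cal E)\cdot H^{n-2}=0$ must be spent to forbid destabilization along it. I would pass to the Jordan--Hölder factors $W_j$ of the strongly semistable bundle $\End\cal E$, which has $c_1=0$ and $\Delta\cdot H^{n-2}=0$; the Bogomolov equality forces each $W_j$ to be strongly stable with numerically trivial $c_1$ and $\Delta(W_j)\cdot H^{n-2}=0$, and the heart of the matter is that such a stable bundle is numerically flat. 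The engine here is Langer's quantitative restriction/Bogomolov inequality, bounding the slope gap of the restriction to a curve in a covering family in terms of $\Delta\cdot H^{n-2}$, so that any instability on a curve would force $\Delta(\cal E)\cdot H^{n-2}>0$. Since iterated extensions of numerically flat bundles are numerically flat, $\End\cal E$ is then nef.

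\textbf{(2)$\Leftrightarrow$(3) and the corollary.} The equivalence with universal semistability is formal: whatever its precise formulation, (3) unwinds by Mehta--Ramanathan to the condition that $g^*\cal E$ be strongly semistable for every $g\colon B\to X$, which by the backbone is nefness of $\End\cal E$, i.e.\ (2). For the displayed corollary, write $\pi\colon\bb P(\cal E)\to X$, $\xi=\cal O_{\bb P(\cal E)}(1)$, and $\lambda=\xi-\tfrac1r\pi^*\det\cal E$, the tautological class under $\bb P(\cal E)=\bb P(\cal E\langle-\tfrac1r\det\cal E\rangle)$. By the curve lemmas, nefness of $\End\cal E$ makes $g^*\cal E\langle-\tfrac1r\det\cal E\rangle$ strongly semistable of degree $0$, hence nef, for every $g$, so $\lambda$ is nef. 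If $\det\cal E$ is nef then $\xi=\lambda+\tfrac1r\pi^*\det\cal E$ is a sum of nef classes, so $\xi$, and therefore $\cal E$, is nef; if $\det\cal E$ is ample then, $\lambda$ being nef and relatively ample over $X$, the class $\xi=\lambda+\tfrac1r\pi^*\det\cal E$ is ample, so $\cal E$ is ample. The converses are immediate, since the determinant of a nef (resp.\ ample) bundle is nef (resp.\ ample).
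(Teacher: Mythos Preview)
Your treatment of $(2)\Rightarrow(1)$, of $(2)\Leftrightarrow(3)$, and of the final corollary is correct and matches the paper's reasoning. The substantive issues are in $(1)\Rightarrow(2)$.

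\textbf{Local freeness.} Your key claim, that ``each non-locally-free point of a reflexive sheaf on a smooth threefold contributes strictly positively to $c_2\cdot H$, and hence to $\Delta\cdot H$,'' is not correct. On a smooth threefold the singular locus of a reflexive sheaf is zero-dimensional, and what those points control is $c_3$ (e.g.\ for rank $2$ reflexive sheaves one has $c_3\ge 0$ with equality iff locally free), not $c_2\cdot H$. So the vanishing of $\Delta\cdot H$ on your threefold slice $Y$ does not by itself force local freeness. The paper proceeds differently: it runs an induction on $\dim X$ in the \emph{stable} case, first proving that the normalized Hilbert polynomial of $\cal E$ coincides with that of $\cal O_X$ by restricting to a general $D\in|mH|$, and then choosing a smooth $D\in|mH|$ through a putative non-locally-free point; the restriction $\cal E|_D$ is torsion free, strongly stable, and has the correct Hilbert polynomial, so by induction it is locally free, and a local lemma (Langer, \cite[Lemma~1.14]{Langer19}) then forces $\cal E$ to be locally free near $D$. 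The semistable case is reduced to the stable one via a Jordan--H\"older/Bogomolov argument and an $\Ext$ computation lifting the extension to the reflexive hull of the quotient.

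\textbf{Nefness of $\End\cal E$.} You correctly isolate the difficulty---passing from general complete-intersection curves to \emph{all} curves---and invoke ``Langer's quantitative restriction bounding the slope gap by $\Delta\cdot H^{n-2}$.'' But those bounds apply to restrictions to divisors in $|mH|$ (iterated down to complete-intersection curves), not to an arbitrary $g\colon B\to X$; as stated your sketch does not reach arbitrary curves. The paper's device is to replace $B$ by its graph $\widetilde B\subset X\times B$: the pullback $\mathrm{pr}_X^*\cal E$ is strongly $\mu_L$-semistable for $L=\mathrm{pr}_X^*H+\mathrm{pr}_B^*A$ (a direct product computation), still has $\Delta\cdot L^{n-1}=0$, and by Bertini one finds a smooth complete-intersection \emph{surface} $S\subset X\times B$ containing $\widetilde B$. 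The surface case (proved via growth estimates for $h^i(\Sym^m\cal E)$) then shows the restriction to $S$ is numerically flat, hence so is the further restriction to $\widetilde B\simeq B$. This product-with-the-curve trick is the missing ingredient in your outline.
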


In the above theorem we say that $\cal E$ is \emph{universally semistable} if for every morphism $f:Y\to X$ from any projective manifold $Y$, the pullback $f^*\cal E$ is slope semistable with respect to any ample polarization on $Y$. It is sufficient to check this condition for morphisms from curves.

When $\det\cal E$ is numerically trivial, condition $(2)$ is simply that the vector bundle $\cal E$ is nef, equivalently $\cal E$ is \emph{numerically flat} ($\cal E$ and $\cal E^{\vee}$ are nef). Theorem \ref{thm:main} takes the following form:

\begin{thrm}\label{thm:main2}
	Let $X$ be a smooth projective variety of dimension $n$ over an algebraically closed field, and let $H$ be an ample polarization. Let $\cal E$ be a reflexive sheaf on $X$. The following are equivalent:
	\begin{enumerate}
		\item $\cal E$ is strongly slope semistable with respect to $H$, and $c_1(\cal E)\cdot H^{n-1}=\ch_2(\cal E)\cdot H^{n-2}=0$.
		\item $\cal E$ is universally semistable and all the Chern classes $c_i(\cal E)$ are numerically trivial for $i\geq 1$.
		\item $\cal E$ is locally free and numerically flat.
	\end{enumerate}
In particular, if $\cal E$ is a strongly slope semistable with respect to $H$ with $c_1(\cal E)$ numerically trivial, then $\cal E$ is nef if and only if $\Delta(\cal E)$ is numerically trivial.
\end{thrm}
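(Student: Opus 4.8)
The plan is to deduce Theorem~\ref{thm:main2} from Theorem~\ref{thm:main}. The two statements concern the same sheaf $\cal E$, and conditions $(2)$ and $(3)$ of Theorem~\ref{thm:main2} are conditions $(2)$ and $(3)$ of Theorem~\ref{thm:main} specialized to the case $\det\cal E\equiv 0$. So there are three tasks: (a) show that the hypothesis of $(1)$ in Theorem~\ref{thm:main2} is equivalent to ``$(1)$ of Theorem~\ref{thm:main} together with $\det\cal E\equiv 0$'', so that Theorem~\ref{thm:main} may be invoked; (b) rewrite conditions $(2)$, $(3)$ of Theorem~\ref{thm:main} when $\det\cal E\equiv 0$; (c) read off the displayed consequence.

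For (a): from $\ch_2=\tfrac12(c_1^2-2c_2)$ one has $\Delta(\cal E)=c_1(\cal E)^2-2r\,\ch_2(\cal E)$, so if $\ch_2(\cal E)\cdot H^{n-2}=0$ then $\Delta(\cal E)\cdot H^{n-2}=c_1(\cal E)^2\cdot H^{n-2}$. Strong slope semistability yields the Bogomolov inequality $\Delta(\cal E)\cdot H^{n-2}\ge 0$ --- in positive characteristic this is exactly where ``strongly'' and the Mehta--Ramanathan restriction theorem enter --- while the Hodge index theorem on a general complete intersection surface $S$ cut out by $n-2$ members of $|mH|$ gives $c_1(\cal E)^2\cdot H^{n-2}\le 0$, using $c_1(\cal E)\cdot H^{n-1}=0$, with equality only when $c_1(\cal E)|_S\equiv 0$. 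Hence $\Delta(\cal E)\cdot H^{n-2}=0$; and since $N^1(X)\to N^1(S)$ is injective for such $S$ (Lefschetz; the cases $n\le 2$ being immediate), $c_1(\cal E)$ is numerically trivial on $X$. Thus $(1)$ of Theorem~\ref{thm:main2} holds exactly when $(1)$ of Theorem~\ref{thm:main} holds and $\det\cal E\equiv 0$, and then Theorem~\ref{thm:main} shows $\cal E$ is locally free with $\End\cal E$ nef.

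For (b): when $\det\cal E\equiv 0$, ``$\cal E$ locally free with $\End\cal E$ nef'' is equivalent to ``$\cal E$ locally free and numerically flat''. Nefness of a bundle or of a $\bb Q$-twist is tested on morphisms $f\colon C\to X$ from smooth curves, and $f^*\End\cal E=\End(f^*\cal E)$; so by the curve characterization recalled in the introduction, $\End\cal E$ is nef if and only if the normalized $\bb Q$-twist $\cal E\langle-\tfrac1r\det\cal E\rangle$ is nef, which when $\det\cal E\equiv 0$ is numerically $\cal E$ itself; applying the same to $\cal E^\vee$ (noting $\End(\cal E^\vee)=\End\cal E$ and $\det\cal E^\vee\equiv 0$) makes $\cal E^\vee$ nef as well, while conversely $\End\cal E=\cal E\otimes\cal E^\vee$ is a tensor product of nef bundles. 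This identifies Theorem~\ref{thm:main2}$(3)$ with Theorem~\ref{thm:main}$(2)$ plus $\det\cal E\equiv 0$. Next, a numerically flat bundle has $\End\cal E$ nef, hence by Theorem~\ref{thm:main} is universally semistable, and it has numerically trivial Chern classes --- Demailly--Peternell--Schneider in characteristic $0$, and in positive characteristic its analogue via the structure theory of numerically flat bundles (iterated extensions of bundles trivialized by a finite cover), which I invoke as a black box. Conversely a universally semistable $\cal E$ is slope semistable with respect to $H$ (pull back along $\mathrm{id}_X$) and remains universally, hence slope, semistable after each Frobenius pullback, since $X^{(p)}\cong X$ and the relative Frobenius is a morphism of projective manifolds --- so $\cal E$ is strongly slope semistable. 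These dictionaries turn the three equivalences of Theorem~\ref{thm:main2} into the three equivalences of Theorem~\ref{thm:main} in the case $\det\cal E\equiv 0$.

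For (c): if $\cal E$ is strongly slope semistable with $c_1(\cal E)\equiv 0$ and $\Delta(\cal E)$ numerically trivial, then $c_2(\cal E)\cdot H^{n-2}=\tfrac1{2r}\big((r-1)c_1(\cal E)^2+\Delta(\cal E)\big)\cdot H^{n-2}=0$, so $\ch_2(\cal E)\cdot H^{n-2}=0$, condition $(1)$ holds, and $\cal E$ is numerically flat, in particular nef; conversely, if $\cal E$ is nef with $c_1(\cal E)\equiv 0$, then $\cal E^\vee\cong\wedge^{r-1}\cal E\otimes(\det\cal E)^{-1}$ is nef --- a quotient of $\cal E^{\otimes(r-1)}$ twisted by a numerically trivial line bundle --- so $\cal E$ is numerically flat, and by $(3)\Rightarrow(2)$ all $c_i(\cal E)$, hence also $\Delta(\cal E)=2r\,c_2(\cal E)-(r-1)c_1(\cal E)^2$, are numerically trivial. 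Once Theorem~\ref{thm:main} is granted, the one genuine obstacle is the black box above: in positive characteristic the vanishing of the higher numerical Chern classes of a numerically flat bundle is not formal and rests on the structure theory of strongly slope semistable sheaves with vanishing discriminant (Lange--Stuhler-type trivialization on finite covers and Langer's refinements). The remaining steps --- the Bogomolov/Hodge-index bookkeeping of (a), the curve-by-curve translation of (b), and the handling of Frobenius pullbacks --- are standard once Theorem~\ref{thm:main} is in hand.
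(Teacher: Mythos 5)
Your reduction is logically coherent, and your step (a) --- using $\Delta(\cal E)=c_1^2(\cal E)-2r\ch_2(\cal E)$, the Bogomolov inequality and the Hodge index theorem on a complete intersection surface to get $c_1(\cal E)\equiv 0$ and $\Delta(\cal E)\cdot H^{n-2}=0$ --- is exactly the opening move of the paper's own proof of this statement (see the proof of Theorem \ref{thm:main3}, which cites \cite[Lemma 4.2]{Langerflat} and \cite[Example 19.3.3]{fulton84} for precisely these two points). The dictionary in (b) and the bookkeeping in (c) are also fine. The difficulty is what you have placed in the black boxes. Within this paper the logical order is the reverse of yours: Theorem \ref{thm:main} (proved as Theorem \ref{thm:main4}) is \emph{deduced from} the trivial-determinant case, since Corollary \ref{local-freeness} reduces to $c_1\equiv 0$ by a Bloch--Gieseker cover and then invokes Corollary \ref{cor:sstablec1=0} --- which is precisely the substantive implication $(1)\Rightarrow(3)$ of Theorem \ref{thm:main2}. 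So granting Theorem \ref{thm:main} and deducing Theorem \ref{thm:main2} from it is circular relative to this paper; to break the circle you would have to import the external proofs of Theorem \ref{thm:main} (Simpson/DPS, or Langer's reduction mod $p$), which are exactly the inputs the paper is written to avoid. The same applies to your second black box, the numerical triviality of Chern classes of numerically flat bundles: the paper proves this elementarily and in all characteristics as Proposition \ref{prop:numflat} via a Segre-class/section-counting argument, so citing DPS or ``structure theory'' there gives away the point.

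Concretely, your proposal never touches the content of $(1)\Rightarrow(3)$, which in the paper lives in the chain: Proposition \ref{prop:surfacetocurve} (the surface case, by restricting $\Sym^m\cal E$ to curves in $|H|$ and bounding $h^i(X,L\otimes\Sym^m\cal E)=O(m^{r-1})$), then Proposition \ref{thm:main-variation} (induction on dimension using the restriction theorem, Lemma \ref{restriction}, including the argument that $\cal E$ is locally free), then Corollary \ref{cor:sstablec1=0} (Jordan--H\"older induction on the rank). What you have written is, in effect, a correct verification that Theorems \ref{thm:main} and \ref{thm:main2} are equivalent to one another modulo Proposition \ref{prop:numflat} --- something the paper already asserts in the introduction --- rather than a proof of either. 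If you want a non-circular argument along your lines, you must either reprove Corollary \ref{cor:sstablec1=0} directly or supply an independent proof of Theorem \ref{thm:main}; as it stands the essential step is missing.
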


\noindent The result fails if $\cal E$ is only assumed to be torsion free. It also fails if the condition $\ch_2(\cal E)\cdot H^{n-2}=0$ in (1) is replaced by $c_2(\cal E)\cdot H^{n-2}=0$.

The two theorems are essentially equivalent. As mentioned before, they are not new. In characteristic zero, Theorem \ref{thm:main} is proved by \cite{NakayamaNormalized}. It is proved by \cite{GKP16} for more general polarizations by movable curves on varieties with mild singularities. It is proved by \cite{BBprincipal} for principal $G$-bundles, and by \cite{BHHiggs} as part of their study of Higgs bundles. 
Note that for Higgs bundles with nontrivial Higgs field it is not known whether an analogue of $(3)\Rightarrow (1)$ holds (this is known as Bruzzo's conjecture). See \cite{BBG19}. The proofs given in the references above have important transcendental components coming from \cite{Simpson} or \cite{DPS94}. The last statement of Theorem \ref{thm:main} has also been observed by \cite{MisraRayAmple}.
In positive characteristic, a version of the result is proved algebraically by the second named author in \cite{Langerflat} making crucial use of the Frobenius morphism. In characteristic zero, \cite[Corollary 4.10]{Langer19} gives an algebraic proof of the implication $(1)\Rightarrow (3)$ for the more general case of Higgs bundles by reduction to positive characteristic. See also \cite[Theorem 12]{Langer15}.
In characteristic zero, Theorem \ref{thm:main2} is proved in \cite[Theorem 2]{Simpson}. A positive characteristic version appears in \cite[Proposition 5.1]{Langerflat}. 

\medskip

A conjecture of Bloch predicts that the Chern classes of numerically flat bundles vanish in $B^*(X)\otimes \mathbb Q$, where $B^m(X)$ is the group of codimension $m$ cycles modulo algebraic equivalence (see \cite[Conjecture 3.2]{La-Chern}). This would strengthen Theorem \ref{thm:main2}. However, this is known only if $k$ has positive characteristic (see \cite[Corollary 3.7]{La-Chern}). In this case the result holds also for general proper schemes (see Corollary \ref{cor-Bloch}).
Analogously, one can formulate a similar conjecture and a result for bundles in Theorem \ref{thm:main} (see \cite[Proposition 3.6]{La-Chern}).
If $k=\mathbb C$ one can prove only a weaker version of the vanishing of Chern classes in the rational cohomology $H^{2*}(X^{\rm an}, \mathbb Q)$, where $X^{\rm an}$ is the complex manifold underlying variety $X$. This result would follow from Bloch's conjecture by using the cycle map $B^*(X)\otimes \mathbb Q\to H^{2*}(X^{\rm an}, \mathbb Q)$.

\subsection{Main results}
We give algebraic proofs for Theorems \ref{thm:main} and \ref{thm:main2} that are characteristic free.  We avoid the application of the non-abelian Hodge theorem \cite[Corollary 1.3]{Simpson} or the reduction to positive characteristic techniques of \cite{Langerflat, Langer19}. Moreover, we generalize both theorems to general proper schemes over an algebraically closed field (see Theorem \ref{main:proper-num-flat} and Corollary \ref{main:proper-universally-ss}).

\subsection{On a question of S.~Misra}
On curves $C$ it follows from \cite{Miyaoka} that $\cal E$ is strongly semistable if and only if every effective divisor on $\bb P_C(\cal E)$ is nef.
The first named author proved in \cite{ful11} a generalization of this for cycles of arbitrary (co)dimension in $\bb P_C(\cal E)$.
It is interesting to see to what extent does Miyaoka's result carry over to a projective manifold $X$ of arbitrary dimension. The equality $\Eff(X)=\Nef(X)$ of cones of divisors is a necessary condition which held trivially on curves. 
Here one has the following result:

\begin{thrm}\label{thm:misra}Let $X$ be a smooth projective variety such that $\Eff(X)=\Nef(X)$.
	Let $\cal E$ be a strongly slope semistable bundle with respect to some ample polarization of $X$ and assume that $\Delta(\cal E)\equiv0$.
	Then $\Eff(\bb P(\cal E))=\Nef(\bb P(\cal E))$.
\end{thrm}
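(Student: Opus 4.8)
The plan is to reduce the statement to Theorem~\ref{thm:main2} via a standard twist. Write $L=\det\cal E$ and consider the normalized bundle $\cal E'=\cal E\langle -\frac1r L\rangle$ (a $\bb Q$-twist), which has numerically trivial determinant and the same discriminant $\Delta(\cal E')=\Delta(\cal E)\equiv 0$, and which is again strongly slope semistable with respect to the same polarization since twisting by a line bundle preserves (strong) slope semistability. By Theorem~\ref{thm:main2} applied to $\cal E'$, the bundle $\cal E'$ is locally free and numerically flat; equivalently $\cal End\,\cal E=\cal End\,\cal E'$ is a nef vector bundle with all Chern classes numerically trivial. So the content of the theorem is: if $X$ has $\Eff(X)=\Nef(X)$ and $\cal E$ is a bundle whose associated endomorphism bundle is numerically flat, then $\Eff(\bb P(\cal E))=\Nef(\bb P(\cal E))$.

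Next I would set up the geometry of $\bb P=\bb P(\cal E)$ with projection $\pi\colon\bb P\to X$ and relative hyperplane class $\xi=c_1(\cal O_{\bb P}(1))$. The Néron--Severi group of $\bb P$ is $\pi^*N^1(X)\oplus\bb Z\xi$. A class $D\equiv a\xi+\pi^*\beta$ with $a\geq 0$: if $a>0$ one may rescale and assume $a=1$, so $D\equiv \xi+\pi^*\beta$; if $a<0$ no multiple of $D$ is effective or nef (push forward to $X$), and if $a=0$ then $D=\pi^*\beta$ and the statement reduces to $\Eff(X)=\Nef(X)$ together with the fact that $\pi^*$ preserves and reflects both cones. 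So the crux is the case $D\equiv\xi+\pi^*\beta$, where I must show $D$ effective $\iff$ $D$ nef.

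For the implication ``effective $\Rightarrow$ nef'': an effective divisor in the class $\xi+\pi^*\beta$ corresponds to a nonzero section of $\pi_*\cal O_{\bb P}(1)\otimes\cal O_X(\beta)=\cal E^\vee\otimes\cal O_X(\beta)$ (or rather its reflexive hull), hence to a nonzero sheaf map $\cal O_X(-\beta)\to\cal E^\vee$, i.e. a subsheaf $\cal O_X(-\beta)\hookrightarrow\cal E^\vee$. Since $\cal E^\vee$ is numerically flat-up-to-twist, precisely $\cal E^\vee\langle\frac1r L\rangle$ is numerically flat, hence slope semistable with all quotients of slope $0$ on every curve; this forces $\cal O_X(-\beta)\langle\frac1r L\rangle$ to have slope $\leq 0$ against every movable/ample curve, which (combined with $\Eff(X)=\Nef(X)$, forcing $N^1(X)$ to be ``one-sided'' enough) should give that $-\beta-\frac1r L$ — and more to the point the class $\beta+\frac1r L$ — lies in $\Nef(X)$. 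Then one checks $\xi+\pi^*\beta$ is nef: on $\bb P(\cal E)$ the class $\xi+\frac1r\pi^*L=c_1(\cal O(1))$ of the normalized $\cal E'$ is nef because $\cal E'$ is nef, and adding $\pi^*(\beta+\frac1r L)\in\pi^*\Nef(X)$ keeps it nef. Conversely ``nef $\Rightarrow$ effective'': a nef $\xi+\pi^*\beta$ has nonnegative intersection with every fiber curve and every curve, in particular $\beta+\frac1r L$ must be nef on $X$, hence (by $\Eff(X)=\Nef(X)$) effective; pulling back a section and multiplying into a power of $\cal O(1)$ of the nef, numerically flat $\cal E'$ — which has sections after twisting by any ample, and more relevantly is globally generated in a suitable sense on the relevant locus — yields an effective divisor in the class. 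The routine bookkeeping here is to pass between $\bb Q$-twists and honest bundles by clearing denominators.

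The main obstacle I expect is the divisor-cone computation on $\bb P(\cal E')$ for a genuinely numerically flat bundle $\cal E'$: one must show that $\Eff(\bb P(\cal E'))$ is exactly the cone spanned by $c_1(\cal O(1))$ and $\pi^*\Eff(X)$, and symmetrically for $\Nef$. The inclusion ``$\supseteq$'' for $\Eff$ is easy ($\cal O(1)$ has a section after a harmless twist, using that $\cal E'^\vee$ has a filtration with numerically trivial line-bundle quotients — here one may need to pass to a finite cover or to the associated-graded of a Jordan--Hölder filtration, which for numerically flat bundles has numerically trivial quotients, hence trivial after a finite étale cover by the structure theory behind Theorem~\ref{thm:main2}); the subtle inclusion is ``$\subseteq$'', i.e. that no effective class has negative $\xi$-coefficient and that once the $\xi$-coefficient is fixed positive the base part is forced into $\Eff(X)=\Nef(X)$. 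This is where the hypothesis $\Eff(X)=\Nef(X)$ is used in an essential (not merely formal) way, via the fact that on such $X$ pseudoeffective equals nef equals movable, so slope computations against movable curves that come out of semistability of $\cal E'^\vee$ translate directly into membership in $\Eff(X)$.
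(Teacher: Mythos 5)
Your overall route (normalize $\cal E$, use Theorem \ref{thm:main}/\ref{thm:main2} to get numerical flatness of the twist, decompose $N^1(\bb P(\cal E))=\pi^*N^1(X)\oplus\bb R\xi$, and convert effectivity into slope inequalities against movable curves, then use $\Eff(X)=\Nef(X)$) is the same as the paper's, which follows Misra's argument. However, there is a genuine gap at the central step. An effective divisor on $\bb P(\cal E)$ lies in a class $m\xi+\pi^*B$ with $m\geq 0$ and corresponds to a nonzero section of $(\Sym^m\cal E)(B)$; note also that in the paper's convention $\bb P(\cal E)={\rm Proj}\,\Sym^\bullet\cal E$ one has $\pi_*\cal O_{\bb P(\cal E)}(1)=\cal E$, not $\cal E^\vee$. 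Rescaling to ``$a=1$'' is harmless for testing nefness of the class, but you then treat the rescaled class $\xi+\pi^*\beta$ as if it were itself represented by an effective divisor giving a map $\cal O_X(-\beta)\to\cal E$; it is not, and the pseudoeffective classes with $\xi$-coefficient $1$ are limits of $\frac{1}{m}[D_m]$ where $D_m$ yields a section of $(\Sym^m\cal E)(B_m)$. This is not a cosmetic point: as Example \ref{ex:2p2} illustrates, effective divisors with $\xi$-coefficient $m>1$ and no $m=1$ analogue genuinely occur and can span extremal rays of $\Eff(\bb P(\cal E))$. So the slope argument must be run for every $\Sym^m\cal E(B)$, and for that you need $\Sym^m\cal E$ to be again strongly slope semistable with $\Delta(\Sym^m\cal E)\equiv 0$. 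In positive characteristic this is exactly the Ramanan--Ramanathan input (or, as the paper notes, it can be deduced from Theorem \ref{thm:main4} together with the fact that symmetric powers of nef bundles are nef), and it is precisely the ingredient the paper singles out as requiring care; your sketch never engages with it.

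Two smaller points. First, $\Nef\subseteq\Eff$ for divisor classes holds on any projective variety (see the paper's preliminaries), so your entire ``nef $\Rightarrow$ effective'' paragraph is unnecessary, and the argument proposed there (``pulling back a section and multiplying into a power of $\cal O(1)$'') would not work as stated anyway. Second, the passage from ``$c_1(\Sym^m\cal E(B))$ has nonnegative degree on every movable curve'' (which is what semistability gives you) to ``$c_1(\Sym^m\cal E(B))$ is pseudoeffective, hence nef'' requires the duality $\Eff(X)=\Mov(X)^\vee$ between pseudoeffective divisors and movable curves; in positive characteristic this is \cite[Theorem 2.22]{fl13z}, the second ingredient the paper explicitly flags. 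You gesture at this (``translate directly into membership in $\Eff(X)$'') but the duality theorem is the mechanism and should be invoked, not assumed.
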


In the characteristic zero case this result was proven by S. Misra in \cite[Theorem 1.2]{Misra21} as an  application of Theorem \ref{thm:main}. A similar proof works also in an arbitrary characteristic.
 Misra \cite[Question 3.11]{Misra21} also asks about a possible converse to this result.

\begin{ques}\label{ques:main}
Let $X$ be a smooth projective variety.	Let $\cal E$ be a vector bundle on $X$ such that every effective divisor on $\bb P(\cal E)$ is nef.
Is it true that $\cal E$ is slope semistable with respect to some (any) polarization of $X$ and $\Delta(\cal E)\equiv 0$? Or equivalently, is $\End \cal E$ numerically flat?
\end{ques}

\noindent For every $n\geq 2$, the tangent bundle $T_{\bb P^n}$ is a counterexample to the current phrasing of the question. In Example \ref{ex:unstableexample} we even give a slope unstable counterexample. However \cite{Misra21} also observes that under the hypothesis of Theorem \ref{thm:misra} the effective and nef cones of divisors coincide on $\bb P(\Sym^m\cal E)$ for all $m\geq 0$.
This motivates the following positive answer to a version of Question \ref{ques:main} in arbitrary characteristic.

\begin{thrm}\label{thm:plethysm}
	Let $X$ be a smooth  projective variety. Let $\cal E$ be a vector bundle on $X$ such that every effective divisor on $\bb P(\Sym^m\cal E)$ is nef for all $m\geq 0$. Then $\cal E$ is strongly slope semistable with respect to any polarization $H$ and $\Delta(\cal E)\equiv 0$.
\end{thrm}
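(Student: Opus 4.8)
The plan is to extract from the hypothesis enough positivity to force numerical flatness of $\End\cal E$, and then invoke Theorem \ref{thm:main2} (applied to $\End\cal E$, whose determinant is trivial) to conclude both strong slope semistability and $\Delta(\cal E)\equiv 0$. The crucial observation is that the condition ``every effective divisor on $\bb P(\Sym^m\cal E)$ is nef for all $m\ge 0$'' must be converted into a statement about a \emph{single} bundle built out of $\cal E$. First I would recall that for a bundle $\cal F$ the tautological quotient $\cal O_{\bb P(\cal F)}(1)$ is nef if and only if $\cal F$ is nef, and that, via the plethysm/Littlewood--Richardson decomposition, $\Sym^m(\End\cal E)$ and the bundles $(\Sym^a\cal E)\otimes(\Sym^b\cal E^\vee)$ appear as summands of tensor powers of $\End\cal E$; conversely the symmetric powers $\Sym^m\cal E$ control, after twisting by a power of $\det\cal E$, the same numerical data as $\End\cal E$. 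The reduction step is therefore: show that the hypothesis implies $\End\cal E$ is nef. Since $\det(\End\cal E)=\cal O_X$ is numerically trivial, a nef $\End\cal E$ is automatically numerically flat, and then $(3)\Rightarrow(1)$ of Theorem \ref{thm:main2} gives that $\cal E$ — equivalently, since $c_1$ of a numerically flat bundle is numerically trivial, the normalization $\cal E\langle -\tfrac1r\det\cal E\rangle$ — is strongly slope semistable with respect to every $H$ and has $\Delta(\cal E)\equiv 0$.

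For the reduction, the key step is to relate effective divisors on $\bb P(\Sym^m\cal E)$ to nefness of high symmetric powers. I would argue as follows. Fix an ample line bundle $A$ on $X$. For $m$ large, $\Sym^m\cal E\otimes A^{\otimes N}$ is globally generated for suitable $N=N(m)$, and a general section produces an effective (in fact, base-point-free away from the relevant locus) divisor $D$ on $\bb P(\Sym^m\cal E)$ in the linear system $|\cal O(1)\otimes \pi^*A^{\otimes N}|$. By hypothesis $D$ is nef; pushing intersection inequalities down to $X$ — i.e., testing $D$ against curves $C'\subset \bb P(\Sym^m\cal E)$ lying over curves $C\subset X$ together with a relative line in a fiber — yields $\deg_C \bigl(\text{quotient line bundles of } \Sym^m\cal E\bigr) \ge -N\cdot(A\cdot C)$. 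Letting $m\to\infty$ (so that $N(m)/m\to 0$, using that global generation is achieved with $N$ growing sublinearly in $m$, by standard Serre-vanishing/Castelnuovo--Mumford regularity estimates) forces every quotient of $\Sym^m\cal E$ to have nonnegative degree on every curve in the limit, hence $\cal E$ itself is nef, and similarly, running the same argument with the effective divisors coming from sections of $\Sym^m\cal E^\vee\otimes A^{\otimes N}$ — which also embed into some $\bb P(\Sym^{m'}\cal E)$ after the standard identification $\Sym^m\cal E^\vee \hookrightarrow (\Sym^m\cal E)^\vee$ and dualizing — gives that $\cal E^\vee$ is nef. Thus $\End\cal E=\cal E\otimes\cal E^\vee$ is nef as a tensor product of nef bundles, completing the reduction.

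The main obstacle I anticipate is making the passage from ``sections of $\Sym^m\cal E\otimes A^{\otimes N}$'' to a uniform, $m$-independent positivity statement rigorous: one must control how large $N$ needs to be relative to $m$ to guarantee the existence of the needed effective divisors, and one must ensure the intersection-number inequalities obtained are tight enough that the $A^{\otimes N}$ correction washes out in the limit. Concretely, the difficulty is that the hypothesis only gives nefness of divisors, which is a weaker input than, say, global generation or ampleness of $\cal O(1)$; one must be careful that the effective divisors one writes down are genuinely there and that ``nef'' applied to them yields the inequality on curves one wants after correctly accounting for the contribution of $\pi^*A^{\otimes N}$ to the intersection with a line in a fiber (which is zero) versus with a horizontal curve. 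A clean way to sidestep part of this is to work directly with $\End\cal E$: the plethysm decomposition shows $\bb P(\Sym^m(\End\cal E))$ maps to a product of $\bb P(\Sym^a\cal E\otimes \Sym^b\cal E^\vee)$'s, and since $\det\End\cal E$ is trivial, ampleness-type corrections by $\det$ disappear; then one shows directly that all effective divisors on $\bb P(\Sym^m(\End\cal E))$ are nef and applies the normalized-bundle criterion from the curve case recalled in the introduction, namely that $\End\cal E$ is nef $\iff$ $\cal E$ is strongly semistable on every curve $\iff$ $\cal E$ is universally semistable, and then Theorem \ref{thm:main2} finishes. I would present whichever of these two routes turns out to require the least amount of regularity bookkeeping; the tensor-product-of-nef-bundles route seems the most robust.
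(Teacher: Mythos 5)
There is a genuine gap, and in fact the reduction you aim for is to a statement that is false under the hypotheses. Your first route tries to show that both $\cal E$ and $\cal E^{\vee}$ are nef, i.e.\ that $\cal E$ is numerically flat; but $\cal E=\cal O_{\bb P^n}(1)^{\oplus 2}$ satisfies the hypothesis (every $\bb P(\Sym^m\cal E)$ is a product $\bb P^n\times\bb P^m$) without being numerically flat, so no argument can reach that conclusion. Only the normalized twist $\cal E\langle-\frac 1r\det\cal E\rangle$ (equivalently $\End\cal E$) can be expected to be nef. The mechanism of the failure is exactly the point you flag as ``the main obstacle'': to make $\Sym^m\cal E\otimes A^{\otimes N}$ globally generated one needs $N$ to grow \emph{linearly} in $m$ (roughly $N\gtrsim -m\,\mu_{\min}(\cal E)$), so $N(m)/m\not\to 0$ and the correction term does not wash out. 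Moreover, sections of $\Sym^m\cal E^{\vee}\otimes A^{\otimes N}$ do not give effective divisors on any $\bb P(\Sym^{m'}\cal E)$ — effective divisors on $\bb P(\cal F)$ correspond to sections of $\Sym^l\cal F\otimes L$, with no dual appearing — so the claimed nefness of $\cal E^{\vee}$ has no source. Your fallback route ``work directly with $\End\cal E$'' silently replaces the hypothesis by $1$-homogeneity of $\bb P(\Sym^m(\End\cal E))$, which is not what is assumed and is not deducible from it at this stage (the paper proves the equivalence of these hypotheses only as a corollary \emph{after} the theorem).

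The missing idea is a purely algebraic plethysm statement (Lemma \ref{plethysm}/Corollary \ref{plethysm2}): for a rank $r$ bundle there is a canonical inclusion of $(\det\cal E)^{\otimes 2m}$ as a subbundle of $\Sym^r(\Sym^{2m}\cal E)$, valid over any commutative ring (the parity of $2m$ is essential). This produces, with no ample twist and no asymptotics, a canonical effective divisor in $|\cal O_{\bb P(\Sym^{2m}\cal E)}(r)\otimes\pi^*(\det\cal E^{\vee})^{\otimes 2m}|$. The hypothesis then says this divisor is nef, i.e.\ $\Sym^r(\Sym^{2m}\cal E)\otimes(\det\cal E^{\vee})^{\otimes 2m}$ is nef; its quotient $\Sym^{2mr}\cal E\otimes(\det\cal E^{\vee})^{\otimes 2m}$ is nef, and homogeneity of nefness for twisted bundles gives that $\cal E\langle-\frac 1r\det\cal E\rangle$ is nef. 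Your final step — invoking Theorem \ref{thm:main} (nefness of the normalized bundle, equivalently of $\End\cal E$, is equivalent to strong slope semistability plus $\Delta(\cal E)\equiv 0$) — is correct and is how the paper concludes, but the reduction to it requires the plethysm inclusion rather than an asymptotic globally-generated-after-twist argument.
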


\noindent Our proof shows that in fact existence of only one positive \emph{even} value $2m$ for which the cones $\Eff(\bb P(\Sym^{2m}\cal E))$ and $\Nef(\bb P(\Sym^{2m}\cal E))$ coincide is sufficient. 
The key idea is a result on plethysms which guarantees that the line bundle $(\det\cal E)^{\otimes 2m}$ is a subbundle of $\Sym^r\Sym^{2m}\cal E$. In characteristic zero, this also follows from \cite{BCI}. 
One can also see that equality $\Eff(\bb P(\End \cal E))=\Nef(\bb P(\End \cal E))$ implies that $\End \cal E$ is numerically flat, which provides a satisfactory answer to the original question.

\subsection{Acknowledgments}
The authors would like to thank  S.~Misra, D. S.~Nagaraj and J. Weyman for useful comments, suggestions, and references. 

\section{Preliminaries}

Let  $X$ be a connected proper scheme over an algebraically closed field $k$
and $\cal E$ be a vector bundle on $X$. Note that terms vector bundle and locally free sheaf are used interchangeably (since $X$ is connected, a locally free sheaf has the same rank at every point of $X$).

\subsection{Positive bundles}

Let $\bb P(\cal E)=\bb P_X(\cal E)={\rm Proj}_{\cal O_X}\Sym^{\bullet}\cal E$ with natural bundle map $\pi:\bb P(\cal E)\to X$, and let $\xi=c_1(\cal O_{\bb P(\cal E)}(1))$. 
We say that $\cal E$ is \emph{ample} (resp.~\emph{nef}) on $X$, if $\xi$ is ample (resp.~nef) on $\bb P(\cal E)$. The definition also makes sense for coherent sheaves.

\subsection{Numerically trivial Chern classes}

In the following we write $A_*(X)$ for the Chow group of rational equivalence classes on $X$ 
and $A^*(X)=A^*(X\stackrel{\rm id}{\longrightarrow} X)$ for the operational Chow ring, i.e., the group of bivariant rational equivalence classes (see \cite[Chapter 17]{fulton84}).
The Chern classes  of $\cal E$ are  operations on the Chow group (see \cite[Chapter 3]{fulton84}) so they are elements of $A^*(X)$. Following \cite[Chapter 19]{fulton84}, we say that  $c_j(\cal E)$ is \emph{numerically trivial} if for every proper closed subscheme $Y\subseteq X$ of dimension $j$ we have $\int_Yc_j(\cal E)=\int _X c_j (\cal E)\cap [Y]=0$, where $\int_X: A_0(X)\to\bb Z$ denotes the natural degree map. Then we write $c_j (\cal E)\equiv 0$. By additivity it is sufficient to check the condition $\int _X c_j (\cal E)\cap [Y]=0$ for all $Y$ that are irreducible and reduced. Similarly, we can define numerical triviality for any polynomial in Chern classes of $\cal E$, or even in Chern classes of finitely many bundles.

\subsection{Positive polynomials}
Consider $n\geq 1$ and grade $\bb Q[c_1,\ldots,c_n]$ so that $\deg c_i=i$.
Let $P(c_1,\ldots,c_n)$ be a weighted homogeneous polynomial of degree $n$.
Fulton and Lazarsfeld proved in \cite{fl83} that $\int _X P(\cal E)\geq 0$ for every $n$-dimensional variety $X$ and every nef vector bundle $\cal E$ on $X$ if and only if $P$ is a linear combination with nonnegative coefficients of Schur polynomials of degree $n$. We call such polynomials \emph{positive}. For example, the degree 1 positive polynomials are spanned over $\bb Q_{\geq 0}$ by $c_1$, while the degree 2 ones are spanned by $c_2$ and by $c_1^2-c_2$. In{\tiny } particular, $c_1^2=c_2+(c_1^2-c_2)$ is positive, but $c_1^2-2c_2$ is not.

Let $X$ be a smooth projective surface and let $\cal E$ be a nef vector bundle of rank $r$ on it. Then $c_1(\cal E)$ is nef, and $\int_X c_2(\cal E)$ and $\int_X(c_1^2(\cal E)-c_2(\cal E))$ are nonnegative integers. 	
If $\cal E$ is strongly slope semistable with respect to some polarization,  Bogomolov's  inequality gives $\int_X(2rc_2(\cal E)-(r-1)c_1^2(\cal E))\geq 0$.
However, there exist examples of strongly slope semistable vector bundles $\cal E$ on surfaces such that all the above positivity conditions hold for the characteristic classes of $\cal E$ without $\cal E$ being nef.

\begin{ex}\label{ex:notnef}
	Let $\cal E=\cal O_{\bb P^2}(-1)\otimes\bigotimes^3(T_{\bb P^2}(-1))$. This is strongly slope semistable since $T_{\bb P^2}$ is strongly slope semistable. Let $h=c_1(\cal O_{\bb P^2}(1))$.	Then $r={\rm rk}\,\cal E=8$, $c_1(\cal E)=4h$, and $c_2(\cal E)=16h^2$.
	We compute $c_1^2(\cal E)-c_2(\cal E)=0$. 
	So the characteristic classes of $\cal E$ suggest that $\cal E$ might be nef. However, the restriction of $\cal E$ to every line in $\bb P^2$ is $\cal O_{\bb P^1}(-1)\otimes\bigotimes^3(\cal O_{\bb P^1}(1)\oplus\cal O_{\bb P^1})$ and this has $\cal O_{\bb P^1}(-1)$ as a summand, hence it is not nef. \qed
\end{ex}

See also \cite[Section 5]{BHP14} for a related example.

\begin{conv}For the rest of this section we assume that $X$ is a smooth projective variety of dimension $n$.
\end{conv}

\subsection{Positive cones of cycles} Denote by $N^c(X)$ the space of numerical classes of cycles of codimension $c$ with real coefficients. 
For example $N^1(X)$ is the real N\' eron--Severi space spanned by Cartier divisors modulo numerical equivalence. The space $N^c(X)$ is finite dimensional. It contains important convex cones. For instance it contains the \emph{pseudoeffective cone} $\Eff^c(X)$, the closure of the cone spanned by classes of closed subsets of codimension $c$. It also contains the \emph{nef cone} $\Nef^c(X)$, the set of classes that intersect every $c$-dimensional subvariety nonnegatively, the dual of the pseudoeffective cone in the complementary codimension. The nef cone and the pseudoeffective cone in codimension $c$ each span $N^c(X)$ and do not contain linear subspaces of $N^c(X)$. See \cite{flpos}.

When $c=1$, we put $\Eff(X)=\Eff^1(X)$ and $\Nef(X)=\Nef^1(X)$.  In this case, $\Nef(X)\subseteq\Eff(X)$. The interior of the nef cone is the ample cone, and the interior of the pseudoeffective cone is the big cone of numerical classes with positive volume \cite[Chapter 2.2]{laz04}.

\subsection{Determinant and discriminant}
Let $\cal F$ be a torsion free sheaf on the smooth projective $X$.
Then the \emph{singular} (non-locally free) locus of $\cal F$ has codimension $\geq 2$. If $\cal F$ is reflexive, then the singular locus has codimension $\geq 3$. 

One can abstractly compute Chern classes of coherent sheaves in $A^*(X)$ by taking locally free resolutions and using the additivity of the Chern character ${\rm ch}$. The \emph{determinant} line bundle $\det\cal F$ of a torsion free sheaf can be defined by extending from the locally free locus, or considering the \emph{reflexive hull} $(\bigwedge^{\rm rk\,\cal F}\cal F)^{\vee\vee}$. This gives a concrete definition of $c_1$. The second Chern class of a reflexive sheaf can be similarly concretely defined.

\begin{rmk}\label{rmk:chernpullback}
	Let $f:Y\to X$ be a morphism of nonsingular projective varieties and let $\cal E$ be a coherent sheaf on $X$. Then $\ch({\bf L}f^*\cal E)=f^*\ch(\cal E)$ in the Chow ring of $Y$, where ${\bf L}f^*\cal E$ is the derived pullback. In particular $c_i(f^*\cal E)=f^*c_i(\cal E)$ for all $i$ in each of the following cases:
	\begin{enumerate}
		\item $\cal E$ is locally free.
		\item $f$ is flat.
		\item $f$ is the inclusion of a Cartier divisor and $\cal E$ is torsion free.
	\end{enumerate}
\end{rmk}

If $\imath:D\hookrightarrow X$ is the inclusion of a Cartier divisor and $\cal E$ is reflexive, then $\imath^*\cal E$ is torsion free. If $|H|$ is a basepoint free linear system and $\cal E$ is torsion free (resp.~reflexive), then $\imath^*\cal E$ is again torsion free (resp.~reflexive) for $D$ \emph{general} in $|H|$. See \cite[Corollary 1.1.14]{HL10}.

\medskip
 
The \emph{discriminant} of $\cal E$ is 
\[\Delta(\cal E)=2r c_2(\cal E)-(r-1)c_1^2(\cal E),\] where $r={\rm rk}\,\cal E$. It is an element of $A^2(X)$, but we use the same notation for its image in $N^2(X)$. The class 
\[\log r+\frac{c_1(\cal E)}r-\frac{\Delta(\cal E)}{2r^2}+\ldots\] 
in $A^*(X)_{\mathbb R}$
is the formal logarithm of the Chern character ${\rm ch}(\cal E)$. It follows that $\frac{\Delta(\cal E)}{2{\rm rk}^2\,\cal E}$ is additive for tensor products if one of the factors is locally free, just like the slope. In particular, $\Delta(\cal E)=\Delta(\cal E\otimes\cal O_X(D))$ for every divisor $D$ on $X$. Furthermore, note that if $c_1(\cal E)\equiv 0$, then $\Delta(\cal E)\equiv 0$ if and only if $c_2(\cal E)\equiv 0$. The vanishing $\Delta(\cal E)\equiv 0$ is equivalent to the numerical vanishing of the second Chern class $c_2$ of $\cal E\langle-\frac 1r\det\cal E\rangle$, the \emph{formal twist} of $\cal E$ in the sense of \cite[Chapter 6.2]{laz042}.
If $\cal E$ is locally free, then $\Delta(\cal E)\equiv 0$ is also equivalent to the numerical vanishing of $c_2$ of  $\cal E^{\otimes r}\otimes\det\cal E^{\vee}$ or  ${\cal E}{\rm nd}\, \cal E$.

\subsection{Semistability}

Let $H$ be an ample (or just nef) divisor on $X$. For a nonzero torsion free sheaf $\cal E$ on $X$, we define the \emph{slope} by $\mu_H(\cal E)=\frac{c_1(\cal E)\cdot H^{n-1}}{{\rm rk}\,\cal E}$. We say that $\cal E$ is $\mu_H$-\emph{semistable} (or \emph{slope semistable} with respect to $H$) if no proper subsheaf $0\neq\cal F\subsetneq\cal E$ verifies $\mu_H(\cal F)>\mu_H(\cal E)$. 
We say that $\cal E$ is $\mu_H$-\emph{stable} (or \emph{slope stable} with respect to $H$) if no proper subsheaf $0\neq\cal F\subsetneq\cal E$ with ${\rm rk}\,\cal F<{\rm rk}\,\cal E$ has $\mu_H(\cal F)\geq\mu_H(\cal E)$.
When $X$ is a curve and $H$ is ample, slope semistability is independent of $H$. In this case we simply say that $\cal E$ is semistable.
In positive characteristic it is useful to also consider Frobenius pullbacks.

\begin{defn}\label{strongly-ss-def}
	A torsion free sheaf $\cal E$ is called \emph{strongly $\mu_H$-(semi)stable} if $(F_X^m)^*\cal E$ is $\mu_H$-(semi)stable for all $m\geq 0$. Here $F_X$ denotes the Frobenius morphism in positive characteristic, and the identity morphism in characteristic zero.
\end{defn}

\noindent Examples show that these notions are indeed stronger that $\mu_H$-(semi)stability in positive characteristic. The first example showing this is due to  J.-P. Serre and it was published in \cite{Gieseker}. More precisely, the example shows that there exists a stable bundle of rank $2$ and degree $1$ on a genus $3$ curve in characteristic $3$, whose Frobenius pullback splits as direct sum of line bundles of different degrees. Nowadays there are many more examples of stable bundles that are not strongly semistable in an arbitrary positive characteristic. See, e.g., \cite[Theorem 1.1.3]{Joshi-Pauly} for recent examples that appear on any smooth projective curve in large characteristic.

\subsection{Semistability on curves}

In the sequel we will use the following well--known lemma. It goes back to R. Hartshorne \cite{Har70} in the characteristic zero case, with a subsequent algebraic proof in any characteristic due to C. M. Barton \cite[Theorem 2.1]{Barton71}. See also \cite[Proposition 7.1]{Moriwaki98}.

\begin{lem}\label{Hartshorne-Burton}
	Let $C$ be a smooth projective curve defined over some algebraically closed field $k$
and let $\cal  E$ be a degree $0$ vector bundle on $C$. Then
$\cal E$ is strongly semistable if and only if  $\cal E$ is nef.
\end{lem}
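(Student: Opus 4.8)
The plan is to prove the two implications separately, with the nontrivial direction being ``strongly semistable $\Rightarrow$ nef''. For the easy direction, suppose $\cal E$ is nef but not strongly semistable. Then some Frobenius pullback $(F_C^m)^*\cal E$ is a nef bundle (nefness is preserved by pullback) which is unstable, so it has a quotient bundle $\cal Q$ of negative degree. A quotient of a nef bundle is nef, and a nef bundle on a curve has nonnegative degree; this contradiction gives the implication. (In characteristic zero one simply takes $m=0$.)

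For the hard direction, assume $\cal E$ is strongly semistable of degree $0$; I want to show $\xi=c_1(\cal O_{\bb P(\cal E)}(1))$ is nef on the ruled variety $\bb P(\cal E)$. The standard strategy is to bound the ``negativity'' of $\cal E$ using restriction and pullback stability, and then let the bound go to zero. Concretely, one first reduces to checking that for every irreducible curve $\Gamma\subset \bb P(\cal E)$ one has $\xi\cdot\Gamma\geq 0$; by normalizing $\Gamma$ and composing with $\pi$, this amounts to showing that for every finite morphism $g\colon C'\to C$ of smooth curves and every quotient line bundle $g^*\cal E\twoheadrightarrow L$ one has $\deg L\geq 0$. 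So it suffices to prove that pullbacks of $\cal E$ under arbitrary finite maps of curves have no quotient (equivalently sub) line bundles of negative degree, i.e.\ that the minimal slope $\mu_{\min}$ of $g^*\cal E$ is $\geq 0$ for all such $g$. Since $\deg(g^*\cal E)=\deg(g)\cdot 0=0$, semistability of $g^*\cal E$ would immediately give $\mu_{\min}(g^*\cal E)=0$; in characteristic zero this is automatic because semistability is preserved by finite pullback, so the lemma is done. In positive characteristic $g$ need not factor through Frobenius, so I instead combine two facts: (i) strong semistability of $\cal E$ implies that $\cal E$ is semistable after any \emph{separable} pullback (separable base change preserves semistability), and (ii) any finite $g$ factors as a separable map followed by a power of the relative Frobenius, and relative Frobenius pullback is exactly what strong semistability controls. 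Running these together shows $g^*\cal E$ is semistable of degree $0$, hence $\mu_{\min}(g^*\cal E)=0\geq 0$, which is what was needed.

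The main obstacle is the positive-characteristic bookkeeping in the last paragraph: making precise that an arbitrary finite morphism of smooth curves decomposes into a separable part and a Frobenius part, and checking that semistability is stable under separable base change. For the separable-descent statement one uses that a destabilizing subsheaf of $g^*\cal E$, being the maximal destabilizing (Harder--Narasimhan) subsheaf, is canonical and hence descends along the Galois closure of a separable $g$ to a destabilizing subsheaf of $\cal E$ itself; this is the classical argument of Miyaoka/Shepherd-Barron. For the Frobenius part there is nothing to prove beyond the definition of strong semistability. Alternatively, and perhaps more cleanly, one can avoid decomposing $g$ entirely: it is enough to know that $g^*(F_C^m)^*\cal E=(F_{C'}^m)^*g^*\cal E$ is semistable for all $m$ when $\cal E$ is strongly semistable and then apply the boundedness estimate $\mu_{\min}(g^*\cal E)\geq \mu_{\min}\big((F_{C'}^m)^* g^*\cal E\big)/p^m$ — wait, the inequality goes the other way — so instead use that $\mu_{\min}\big((F_{C'}^m)^*\cal F\big)\leq p^m\mu_{\min}(\cal F)$ always, and strong semistability forces the left side to be $0$ for the relevant bundle, pinning $\mu_{\min}(g^*\cal E)\geq 0$. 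Either route reduces the lemma to the elementary fact that nef bundles on curves have nonnegative-degree quotients, which was already recorded above, together with the characterization of nefness of $\xi$ via curves in $\bb P(\cal E)$.
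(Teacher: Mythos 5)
The paper does not prove this lemma; it cites Hartshorne, Barton \cite[Theorem 2.1]{Barton71} and \cite[Proposition 7.1]{Moriwaki98}, so I am comparing your proposal against the classical argument rather than against an in-paper proof. Your main route is exactly that classical argument and it is correct: the direction ``nef $\Rightarrow$ strongly semistable'' via quotients of negative degree is fine, and for the converse the reduction of nefness of $\xi$ to the statement $\mu_{\min}(g^*\cal E)\geq 0$ for all finite covers $g\colon C'\to C$ is standard (just remember to dispose of curves contained in fibres of $\pi$, where $\xi$ restricts to $\cal O(1)$ on a projective space). The heart of the matter, that $g^*\cal E$ is semistable of degree $0$, is correctly handled by splitting $g$ into a separable part (Galois descent of the Harder--Narasimhan filtration) and a purely inseparable part (controlled by strong semistability). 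One small correction: the factorization that always exists is $g=h\circ\phi$ with $\phi\colon C'\to C''$ purely inseparable \emph{at the source} (a Frobenius power up to isomorphism, since $k(C'')=k(C')^{p^m}$) followed by a separable $h\colon C''\to C$; the opposite order ``separable first, then Frobenius on $C$'' need not exist. Your argument only needs the order that does exist: $g^*\cal E=\phi^*(h^*\cal E)$, with $h^*\cal E$ strongly semistable by separable descent, so $\phi^*$ of it is semistable.

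The ``alternatively, and perhaps more cleanly'' paragraph at the end does not work as stated and you should drop it or repair it. First, your self-correction about the inequality is a red herring: $\mu_{\min}\bigl((F^m)^*\cal F\bigr)\leq p^m\mu_{\min}(\cal F)$ and $\mu_{\min}(\cal F)\geq\mu_{\min}\bigl((F^m)^*\cal F\bigr)/p^m$ are the same (correct) inequality. The real problem is that to conclude you need the left-hand side $\mu_{\min}\bigl((F_{C'}^m)^*g^*\cal E\bigr)=\mu_{\min}\bigl(g^*(F_C^m)^*\cal E\bigr)$ to be $\geq 0$, and strong semistability of $\cal E$ only tells you that $(F_C^m)^*\cal E$ is semistable of degree $0$ \emph{on $C$}; it says nothing about the minimal slope of its pullback to $C'$, which is precisely the assertion being proved. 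So that route is circular. It can be salvaged, but only with an extra quantitative input: a bound $\mu_{\max}(g^*\cal F)-\mu(g^*\cal F)\leq c(g,r)$ uniform over semistable $\cal F$ of rank $r$, after which $\mu_{\max}(g^*\cal E)\leq\mu_{\max}\bigl(g^*(F_C^m)^*\cal E\bigr)/p^m\leq c/p^m\to 0$. Since your primary argument via the separable/inseparable factorization is complete without this, the lemma is proved; just be aware that the shortcut is not actually a shortcut.
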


We will also use the following standard lemma (see, e.g., \cite[Lemma 7.1.2]{LePotier97}).

\begin{lem}\label{bound-on-sections}
	Let $\cal  E$ be a semistable vector bundle on a smooth projective curve $C$ defined over some algebraically closed field $k$. Then
	$$h^0(C, \cal E)\le {\rm rk}\,\cal E + \deg \cal E.$$
\end{lem}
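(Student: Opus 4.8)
The plan is to prove the bound by induction on $r={\rm rk}\,\cal E$, peeling off from $\cal E$ a line subbundle that carries a global section; write $\mu(\cal F)=\deg\cal F/{\rm rk}\,\cal F$ for the slope, which on a curve is all that enters. The case $h^0(\cal E)=0$ is immediate, and whenever $h^0(\cal E)>0$ a nonzero section $\cal O_C\to\cal E$ is injective (as $\cal E$ is torsion free), so $\cal E$ contains $\cal O_C$ and hence $\mu(\cal E)\ge\mu(\cal O_C)=0$, i.e.\ $\deg\cal E\ge 0$, by semistability. For the base case $r=1$ the sheaf is a line bundle $L$ with $h^0(L)\ge 1$; repeatedly subtracting a point via $0\to L(-p)\to L\to L|_p\to 0$ and inducting on $\deg L$ gives $h^0(L)\le\deg L+1$.

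For the inductive step, assume $r\ge 2$ and $h^0(\cal E)>0$. Choose a nonzero section and let $L\subseteq\cal E$ be the saturation of its image. Then $L$ is a line \emph{subbundle} (its quotient is torsion free, hence locally free on the smooth curve) and, since $\cal O_C\subseteq L$, one has $\deg L\ge 0$ and $h^0(L)\ge 1$. Set $\cal Q=\cal E/L$, a vector bundle of rank $r-1$ and degree $\deg\cal E-\deg L$. From $0\to L\to\cal E\to\cal Q\to 0$ we get $h^0(\cal E)\le h^0(L)+h^0(\cal Q)\le(\deg L+1)+h^0(\cal Q)$, so it suffices to bound $h^0(\cal Q)$ by $(r-1)+\deg\cal Q$.

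The only real obstacle is that $\cal Q$ need not be semistable, so the inductive hypothesis cannot be applied to it directly. I would fix this by passing to the Harder--Narasimhan filtration of $\cal Q$, with semistable graded pieces $\cal Q_1,\dots,\cal Q_m$ of ranks $r_i<r$, whence $h^0(\cal Q)\le\sum_i h^0(\cal Q_i)$. The key point is that every slope $\mu(\cal Q_i)$ is $\ge\mu(\cal E)\ge 0$: the bottom piece $\cal Q_m$ is a quotient of $\cal Q$, hence of $\cal E$, so $\mu(\cal Q_m)=\mu_{\min}(\cal Q)\ge\mu(\cal E)$ by semistability of $\cal E$, and the remaining slopes are strictly larger. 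Hence each $\deg\cal Q_i\ge 0$, the inductive hypothesis gives $h^0(\cal Q_i)\le r_i+\deg\cal Q_i$ with no loss, and summation yields $h^0(\cal Q)\le\sum_i(r_i+\deg\cal Q_i)=(r-1)+\deg\cal Q$. Combining with the previous step, $h^0(\cal E)\le(\deg L+1)+(r-1)+(\deg\cal E-\deg L)={\rm rk}\,\cal E+\deg\cal E$, which closes the induction.
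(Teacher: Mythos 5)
Your argument is correct, and since the paper does not prove this lemma at all --- it simply cites \cite[Lemma 7.1.2]{LePotier97} --- there is no internal proof to compare against; your induction on the rank, peeling off the saturation of a nonzero section and controlling the quotient through its Harder--Nara\-simhan pieces, is essentially the standard argument from that reference. The only point worth flagging is your opening claim that the case $h^0(C,\cal E)=0$ is ``immediate'': as literally stated the inequality fails there whenever $\deg\cal E<-{\rm rk}\,\cal E$, so the lemma implicitly carries the hypothesis $\deg\cal E\ge 0$ (equivalently $h^0>0$, which forces it, as you observe). Your inductive step is unaffected by this, because you verify $\deg\cal Q_i\ge 0$ for every Harder--Narasimhan factor before invoking the inductive hypothesis, which is precisely the regime where the statement is unconditionally true; and in the paper's application (to $(L\otimes\Sym^m\cal E)_D$ in Proposition \ref{prop:surfacetocurve}) the bound is only used when it is nonnegative, so nothing is lost.
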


\section{Numerically flat vector bundles on proper schemes}

Let $X$ be a proper scheme over a field $k$. 

\begin{defn}
	A vector bundle $\cal E$ on $X$ is called \emph{numerically flat} if both $\cal E$ and $\cal E^{\vee}$ are nef.
\end{defn}

\begin{rmk} A bundle $\cal E$ is numerically flat if and only if it is nef with $c_1(\cal E)\equiv 0$. Indeed if $\cal E$ and $\cal E^{\vee}$ are nef, then $c_1(\det\cal E)=c_1(\cal E)$ is numerically trivial. Conversely, if $\cal E$ is nef with $c_1(\cal E)\equiv 0$ then
 by \cite[Theorem 6.2.12]{laz042} any exterior power of $\cal E$ is nef	and hence
$\cal E^{\vee}\simeq\det\cal E^{\vee}\otimes\bigwedge^{r-1}\cal E$ is also nef. 
	
	It follows that if $\cal E$ is numerically flat, then all tensor functors of $\cal E$ and their duals are numerically flat, e.g., tensor powers, symmetric powers, divided powers, exterior powers, and all other Schur and co-Schur (Weyl) functors associated to $\cal E$.
	\end{rmk}

\subsection{Numerical triviality of Chern classes of numerically flat bundles}

\begin{prop}\label{prop:numflat}
	Let $X$ be a  proper scheme over a field $k$ and let $\cal E$ be a numerically flat vector bundle on $X$. Then $c_j(\cal E)$ is numerically trivial for all $j>0$.
\end{prop}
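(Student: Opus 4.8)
The plan is to reduce to the case of curves, where numerical flatness is equivalent to strong semistability via Lemma~\ref{Hartshorne-Burton}, and then use a Lefschetz-type cutting argument together with the behaviour of Chern classes under restriction. More precisely, to show $c_j(\cal E)\equiv 0$ it suffices, by additivity, to show $\int_Y c_j(\cal E)\cap[Y]=0$ for every irreducible reduced closed subscheme $Y\subseteq X$ of dimension $j$. Replacing $X$ by $Y$ and $\cal E$ by $\cal E|_Y$ — which is still numerically flat since nefness is preserved under pullback along the closed immersion — we may assume $X$ is an irreducible reduced projective scheme of dimension $j$ and we must prove $\int_X c_j(\cal E)=0$.

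First I would dispose of the top self-intersection $\int_X c_1(\cal E)^j$: since $c_1(\cal E)\equiv 0$ (a numerically flat bundle has numerically trivial determinant, as recorded in the remark preceding the proposition), any monomial in the Chern classes involving a positive power of $c_1$ integrates to zero. So the content is in the classes built only from $c_2,\dots,c_j$. The key tool is the Fulton--Lazarsfeld positivity theorem quoted in the ``Positive polynomials'' subsection: for a nef bundle on a $j$-dimensional variety, every positive polynomial (nonnegative combination of Schur polynomials) has nonnegative integral. Applying this to both $\cal E$ and $\cal E^\vee$ — both nef — and using that $c_i(\cal E^\vee)=(-1)^ic_i(\cal E)$, one gets for each degree-$j$ Schur polynomial $s_\lambda$ the two inequalities $\int_X s_\lambda(c_\bullet(\cal E))\geq 0$ and $(-1)^j\int_X s_\lambda(c_\bullet(\cal E))\geq 0$, forcing $\int_X s_\lambda(c_\bullet(\cal E))=0$ when $j$ is odd. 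For even $j$ this is not yet enough, so the argument must be genuinely inductive on dimension.

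The inductive step: assume the statement for proper schemes of dimension $<j$. Take $X$ irreducible reduced of dimension $j\geq 1$; after passing to a projective birational modification (which does not affect $\int_X c_j$ by the projection formula, since $c_j$ pulls back and pushes forward correctly) we may assume $X$ is projective and even, after further normalization and resolution arguments are available only in char $0$, so instead I would work directly with a very ample $H$ on $X$ and a general member $D\in|H|$. By Remark~\ref{rmk:chernpullback} the Chern classes of $\cal E|_D$ are the restrictions of those of $\cal E$ (here one needs $\cal E$ locally free, which it is), so $\int_D c_{j-1}(\cal E)\cap[D]=\int_X c_{j-1}(\cal E)\cdot H$. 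The bundle $\cal E|_D$ is numerically flat, and $\dim D=j-1$, so by induction $c_{j-1}(\cal E|_D)\equiv 0$; feeding this back one shows that all the "mixed" classes $c_I(\cal E)\cdot H^{\,j-|I|}$ with $|I|<j$ vanish. What remains is to upgrade from "vanishing against powers of one ample $H$" to "vanishing against every subvariety", i.e. to conclude $c_j(\cal E)\equiv 0$ rather than merely $\int_X c_j(\cal E)=0$ — but since we already reduced to $\dim X=j$ at the outset, $\int_X c_j(\cal E)=0$ \emph{is} the full statement, and this last integral we handle by the Fulton--Lazarsfeld inequality applied to a positive polynomial expressing $c_j$ plus, for even $j$, an induction that peels off one dimension via a general hyperplane section as above to reduce $\int_X c_j(\cal E)$ to an integral over $D$ of something already known to vanish.

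The main obstacle I anticipate is the even-dimensional case of the self-intersection $\int_X c_j(\cal E)$ when $j$ is even: the Fulton--Lazarsfeld inequalities for $\cal E$ and $\cal E^\vee$ then point the same way and give no cancellation, so one genuinely needs the hyperplane-section induction to work, which in turn requires care that $\cal E|_D$ remains locally free and numerically flat and that the restriction identity $c_\bullet(\cal E|_D)=c_\bullet(\cal E)|_D$ holds — all guaranteed here because $\cal E$ is a vector bundle and $D$ is a Cartier divisor (Remark~\ref{rmk:chernpullback}(1) and (3)). A secondary subtlety is that $X$ is only assumed proper, not projective, so to cut by hyperplanes one first passes to the reduction and then to a projective birational cover (e.g. via Chow's lemma), checking that $\int c_j$ is unchanged — this is routine via the projection formula but should be spelled out, since the whole inductive scheme rests on having ample line bundles available.
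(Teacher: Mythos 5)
Your reductions are fine up to the last step, but the way you propose to close the argument for even $j$ has a genuine gap. Grant that the induction kills every decomposable monomial: any product $c_{i_1}\cdots c_{i_k}$ with $k\ge 2$ and all $i_l<j$ integrates to zero on $Y$, because $c_{i_2}\cdots c_{i_k}\cap[Y]$ is an integral combination of $i_1$-dimensional subvarieties on which $c_{i_1}(\cal E)\equiv 0$. The one remaining number is then $\int_Y c_j(\cal E)$ with $\dim Y=j$. This is a top-degree intersection number on $Y$ itself; it is not of the form $\alpha\cdot H$ for any class $\alpha$ of degree $j-1$, so it cannot be ``peeled off'' by restricting to a general $D\in|H|$ --- hyperplane cutting only computes integrals that already carry a factor of $H$. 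That top self-intersection is exactly the hard content of the proposition (already for $j=2$, the case the whole paper turns on), and your plan does not resolve it. The fix \emph{inside your own framework} is cheap, though: the cone of positive polynomials of degree $j$ contains both $c_j$ and a second Schur polynomial whose $c_j$-coefficient is $-1$ (for $j=2$ it is $c_1^2-c_2$, listed in the paper's preliminaries; in general the degree-$j$ Segre class works). Once the decomposables vanish, Fulton--Lazarsfeld applied to these two gives $\int_Y c_j(\cal E)\ge 0$ and $-\int_Y c_j(\cal E)\ge 0$ for \emph{every} $j$, with no parity issue and no hyperplane peeling. The paper records essentially this alternative in a remark right after its proof (nefness of $c_k(\cal E)$, $c_k(\cal E^{\vee})$ and $s_k(\cal E^{\vee})$ plus salience of $\Nef^k$), but flags it as depending on the hard Lefschetz theorem through \cite{BlochGieseker} and \cite{fl83}.

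The paper's actual proof avoids Fulton--Lazarsfeld altogether. It keeps your induction on $j$ but uses the Segre--Chern recursion $s_j(\cal E^{\vee})=(-1)^{j+1}c_j(\cal E)-\sum_{i=1}^{j-1}(-1)^is_i(\cal E^{\vee})c_{j-i}(\cal E)$ to reduce everything to showing $\int_Y s_j(\cal E_Y^{\vee})=\int_{\bb P(\cal E_Y)}\xi^{j+r-1}=0$, and then kills this top self-intersection of the nef class $\xi$ by a section count: $h^0(\bb P(\cal E_Y),\cal O(m))=h^0(Y,\Sym^m\cal E_Y)\le h^0(H,\Sym^m\cal E_H)=O(m^{j+r-2})$, since $(\Sym^m\cal E_Y)(-H)$ has no sections by numerical flatness, and asymptotic Riemann--Roch then forces the leading coefficient $\int_{\bb P(\cal E_Y)}\xi^{j+r-1}/(j+r-1)!$ to vanish. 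If you want the elementary, characteristic-free route you should adopt that step; otherwise replace the hyperplane-peeling claim with the two-Schur-polynomial argument above.
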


\begin{proof}
	The proof is by induction on $j$. For $j=1$ the assertion is clear since by assumption for any proper curve  $Y\subseteq X$ we have $\int _X c_1 (\cal E)\cap [Y]\ge 0$ and  
	$\int _X c_1 (\cal E^{\vee})\cap [Y]=-\int _X c_1 (\cal E)\cap [Y]\ge 0$.
	
	Let $Y\subseteq X$ be a $j$-dimensional subvariety of $X$. 	
	Let $s_j(\cal E^{\vee})=\pi_*(\xi^{j+r-1})$ be the $j$-th Segre class of the dual of $\cal E$.  
	By \cite[Chapter 3.2]{fulton84}, we have $s_j(\cal E^{\vee})=(-1)^{j+1}c_j(\cal E)-\sum_{i=1}^{j-1}(-1)^is_i(\cal E^{\vee})c_{j-i}(\cal E)$. By our induction hypothesis, we deduce $\int _{Y} s_j(\cal E_{Y}^{\vee})=(-1)^{j+1}\cdot\int _{Y} c_j(\cal E_{Y})$, so is suffices to prove that $\int _{Y}s_j(\cal E^{\vee}_{Y})=0$. 
	Set $Z\coloneqq{\bb P(\cal E_Y)}$ and $\cal L\coloneqq\cal O_{\bb P(\cal E_Y)}(1)$.
	Since $\cal E$ is nef, $\cal E_Y$ and $\cal L$ are also nef. 
	By the asymptotic Riemann--Roch theorems (see \cite[Chapter VI, Corollary 2.14 and Theorem 2.15]{kollarrational}) we have
	$$h^0(Z, \cal L ^{\otimes m})=\chi (Z, \cal L ^{\otimes m})+O (m^{j+r-2})= \frac{\int _{Z} c_1(\cal L) ^{j+r-1}}{(j+r-1)!} m^{j+r-1}+O (m^{j+r-2}).$$
	Since $\cal E_Y$ is numerically flat, ${\rm Sym} ^m\cal E_Y$ is also numerically flat. Hence for any ample divisor $H$ on $Y$ we have
	$ h^0 (Y, ({\rm Sym} ^m\cal E_Y)(-H))=0$ (otherwise ${\rm Sym} ^m\cal E_Y$ contains $\cal O(H)$ contradicting the nefness of $(\Sym^m\cal E_Y)^{\vee}$).
	Again using the asymptotic Riemann--Roch theorem we get 
	\begin{align*}
		h^0(Z, \cal L ^{\otimes m})=h^0 (Y, {\rm Sym} ^m\cal E_Y)&\le h^0 (Y, ({\rm Sym} ^m\cal E_Y)(-H))+ h^0 (H, {\rm Sym} ^m\cal E_H)\\
		&= h^0(\bb P(\cal E_H),
		\cal O_{\bb P(\cal E_H)}(m))= O (m^{j+r-2}).
	\end{align*}
	Summing up, we have  $\int _{Y} s_j(\cal E^{\vee}_{Y})=\int _{Z} c_1(\cal L) ^{j+r-1}=0$ as required.
\end{proof}

\begin{rmk}
	The idea of proof of vanishing of highest Segre classes comes from the proof of \cite[Proposition 5.1]{FulgerConesvect}, in turn inspired by the proof of the Bogomolov inequality. We avoid using this result and 
	semistability and give a proof working in an arbitrary characteristic. The proof of an analogue of \cite[Proposition 5.1]{FulgerConesvect} would require small rewriting and the use of deep results of Ramanan and Ramanathan \cite{RR84} on the behaviour of strong slope semistability.
\end{rmk}

\begin{rmk}
	If $X$ is a smooth complex manifold the above proposition was proven in \cite[Corollary 1.19]{DPS94} using earlier deep analytic results. If $X$ is a smooth variety and $k$ has positive characteristic the above proposition follows from \cite[Proposition 5.1 and Theorem 4.1]{Langerflat}. The proof of  \cite[Proposition 5.1]{Langerflat} cites rather deep results from \cite{fl83} although it uses only a much weaker and easier result of Kleiman \cite{Kleiman69}. However, it also depends on \cite{Langer04} and the proof above is much more elementary.
\end{rmk}

\begin{rmk}
	From \cite{BlochGieseker} or \cite{fl83}, the Chern class $c_k(\cal E)$ is nef if $\cal E$ is a nef bundle. Thus for $\cal E$ numerically flat the classes $c_k(\cal E)$, $c_k(\cal E^{\vee})$, and $s_k(\cal E^{\vee})$ are nef.
	Using induction and the fact that $\Nef^k(X)$ does not contain linear subspaces (cf.~\cite{flpos}), this gives another argument than the one above.
	Note though that \cite{BlochGieseker} and consequently also \cite{fl83} use the hard Lefschetz theorem on cohomology so this proof is much harder. 
\end{rmk}

\medskip

Together with the main result of \cite{Langer04} the above propostion implies the following corollary:

\begin{cor}\label{boundedness}
	Let $f:X\to S$ be a flat projective morphism of noetherian schemes. Then the set of numerically flat vector bundles of fixed rank $r$ on the fibers of $f$ is bounded.
\end{cor}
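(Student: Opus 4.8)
The plan is to reduce the boundedness of numerically flat bundles to the boundedness results of \cite{Langer04} by controlling the invariants that appear there. The main theorem of \cite{Langer04} gives boundedness of the family of semistable (or, more generally, bounded-instability) sheaves on the fibers of a flat projective family once one fixes the rank, the polarization, and the relevant Chern/Hilbert-polynomial data. So the task is to show that every numerically flat bundle of rank $r$ on a fiber $X_s$ is slope semistable with respect to a (fiberwise chosen) relative polarization, and that its discrete invariants lie in a finite set independent of $s$.

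First I would fix an $f$-ample line bundle $H$ on $X$ (after shrinking $S$ to its connected components we may assume $X$ and hence each fiber is connected). For $s\in S$ write $X_s$ for the fiber, $H_s=H|_{X_s}$, $n=\dim X_s$. If $\cal E$ is numerically flat on $X_s$, then by the remark following the definition of numerical flatness all its tensor and Schur functors, and their duals, are again nef, in particular $\cal E$ has no subsheaf with positive slope relative to $H_s$ (a destabilizing sub would produce a nonzero map $\cal O(mA)\to\Sym^?$ of some functor, contradicting nefness of the dual), so $\cal E$ is $\mu_{H_s}$-semistable. Moreover Proposition \ref{prop:numflat} applied to $\cal E$ on $X_s$ gives $c_j(\cal E)\equiv 0$ for all $j\ge 1$; hence $c_1(\cal E)\cdot H_s^{n-1}=0$ and $c_2(\cal E)\cdot H_s^{n-2}=0$, and more generally every intersection number of a Chern class of $\cal E$ against powers of $H_s$ vanishes. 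Thus the Hilbert polynomial of $\cal E$ with respect to $H_s$ equals that of $\cal O_{X_s}^{\oplus r}$, which by flatness of $f$ (so that $\chi(X_s,H_s^{\otimes m})$ is locally constant in $s$) takes only finitely many values as $s$ ranges over $S$.

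With rank, polarization, and Hilbert polynomial pinned down and semistability established, the boundedness theorem of \cite{Langer04} (valid in all characteristics, which is exactly why we invoke that reference rather than a characteristic-zero statement) applies and yields a finite-type family over $S$ parametrizing all such sheaves; restricting to the open locus of locally free sheaves gives the claim. The one point that needs a little care, and which I expect to be the main obstacle, is the passage from "numerically flat" to a \emph{uniform} choice of semistability data: numerical flatness is a condition about the fiber $X_s$ alone, so one must check that the deduction of $\mu_{H_s}$-semistability and of the vanishing of the $H_s$-intersection numbers of Chern classes is genuinely uniform in $s$ — this is where flatness of $f$ (constancy of $\chi(X_s, H_s^{\otimes m})$, hence of the Hilbert polynomial of $\cal O_{X_s}^{\oplus r}$) does the work — and that \cite{Langer04} is stated in the relative form over a noetherian base, or else one reduces to that form by a standard spreading-out argument.
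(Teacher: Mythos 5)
Your proposal is correct and follows essentially the same route as the paper: use Proposition \ref{prop:numflat} to kill the Chern classes, deduce via (singular) Riemann--Roch that the Hilbert polynomial of $\cal E$ is $r$ times that of $\cal O_{X_s}$, note that flatness makes the latter constant on connected components of $S$, observe that numerically flat bundles are slope semistable with respect to the fiberwise polarization, and conclude with the boundedness theorem of \cite{Langer04}. The only detail worth making explicit is that, since the fibers may be singular, the passage from vanishing Chern classes to the Hilbert polynomial identity requires the singular Grothendieck--Riemann--Roch theorem of \cite[Corollary 18.3.1]{fulton84}, which is exactly what the paper invokes.
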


\begin{proof}
	Let $\cal O_X (1)$ be an $f$-very ample line bundle on $X$ and let $\cal E$ be  a rank $r$  numerically flat vector bundle on a geometric fiber $X_s$ for some geometric point  $s$ of $S$. The singular Grothendieck--Riemann--Roch theorem (see \cite[Corollary 18.3.1]{fulton84}) and Proposition \ref{prop:numflat} imply that 
	$$\chi (X_s, \cal E (m))=\int _{X_s} \ch (\cal E(m))\cap {\mathrm {Td}} ({X_s})=r \int _{X_s} \ch ( \cal O_{X_s} (m))\cap {\mathrm {Td}} ({X_s})= r \chi ({X_s}, \cal O_{X_s} (m)).$$
Since $f$ is flat, for every connected component $S_0$ of $S$ the Hilbert polynomial  $P_{s}(m)=\chi ({X_s}, \cal O_{X_s} (m))$ is independent of the geometric point  $s$ of $S_0$.  Moreover, any numerically flat vector bundle on $X_s$ is slope $\cal O_{X_s} (1)$-semistable (the general definition of slope semistability in case of singular projective schemes can be found in \cite[Definition and Corollary 1.6.9]{HL10}). Therefore the required assertion follows from \cite[Theorem 4.4]{Langer04}.
\end{proof}

\begin{rmk}
	In case $X$ is a normal variety and $S=\Spec k$, the above corollary was proved in \cite[Theorem 1.1]{Langerflat2}. If $S=\Spec k$ and $k$ is a finite field the above corollary was proved in \cite[Theorem 2.4]{De-We}. 
\end{rmk}

\begin{thrm}
	Let  $X$ be a projective scheme over a perfect field $k$ of positive characteristic. 	Let $\cal E$ be a rank $r$  vector bundle on $X$.
	Then the following  conditions are equivalent:
	\begin{enumerate}
		\item $\cal E$ is numerically flat.
		\item The set $\{(F_X^m)^*\cal E\}_{m\in \mathbb Z_{\ge 0}}$ is bounded.
		\item There exist $m_1>m_2\ge 0$ such that $(F_X^{m_1})^*\cal E $ and $(F_X^{m_2})^*\cal E $ are algebraically equivalent.
	\end{enumerate}
\end{thrm}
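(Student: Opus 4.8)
The plan is to show the cycle of implications $(1)\Rightarrow(2)\Rightarrow(3)\Rightarrow(1)$. The implication $(1)\Rightarrow(2)$ is essentially free from what precedes: if $\cal E$ is numerically flat, then so is every Frobenius pullback $(F_X^m)^*\cal E$, since nefness is preserved by pullback along the (finite) Frobenius and $(F_X^m)^*(\cal E^{\vee})=((F_X^m)^*\cal E)^{\vee}$. By Proposition \ref{prop:numflat}, each $(F_X^m)^*\cal E$ has numerically trivial Chern classes, so by the singular Grothendieck--Riemann--Roch computation used in the proof of Corollary \ref{boundedness}, the Hilbert polynomial $\chi(X,(F_X^m)^*\cal E(k))$ equals $r\cdot\chi(X,\cal O_X(k))$, independent of $m$. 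Since each $(F_X^m)^*\cal E$ is also slope semistable with respect to a fixed polarization (being numerically flat), boundedness of the family $\{(F_X^m)^*\cal E\}_m$ follows from \cite[Theorem 4.4]{Langer04}, exactly as in Corollary \ref{boundedness}. (Alternatively one can quote Corollary \ref{boundedness} directly with $S=\Spec k$.)

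For $(2)\Rightarrow(3)$: a bounded family of vector bundles on the projective scheme $X$ is parametrized by finitely many irreducible quasi-projective schemes, in particular the classes $[(F_X^m)^*\cal E]$ modulo algebraic equivalence lie in a quotient of $\coprod_i(\text{Chow variety components})$ which, after fixing Hilbert polynomial, has only finitely many connected components; hence among the infinitely many $m\ge 0$ two of them, say $m_1>m_2$, must give bundles lying in the same connected component of the parameter space, i.e.\ $(F_X^{m_1})^*\cal E$ and $(F_X^{m_2})^*\cal E$ are algebraically equivalent. (More precisely, one uses that algebraic equivalence classes of subschemes with fixed Hilbert polynomial in a fixed $\bb P^N$ form a finite set; apply this to graphs or to the bundles themselves embedded suitably, or simply invoke that a bounded family has finitely many algebraic-equivalence classes.)

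The heart of the theorem is $(3)\Rightarrow(1)$. Suppose $(F_X^{m_1})^*\cal E$ and $(F_X^{m_2})^*\cal E$ are algebraically equivalent with $m_1>m_2$. Set $q=p^{m_1-m_2}$ and $\cal F=(F_X^{m_2})^*\cal E$; then $(F_X^{m_2})^*$ of the relation, together with the identity $F_X^{m_1}=F_X^{m_1-m_2}\circ F_X^{m_2}$, shows that $F^*\cal F$ is algebraically equivalent to $\cal F$, where $F=F_X^{m_1-m_2}$ is the $q$-power Frobenius. Now test against an arbitrary irreducible proper curve $\varphi:C\to X$ with $C$ smooth; pulling back, $(F_C^{m_1-m_2})^*(\varphi^*\cal F)$ is algebraically equivalent to $\varphi^*\cal F$ on $C$, hence has the same degree, so $q\cdot\deg(\varphi^*\cal F)=\deg(\varphi^*\cal F)$ and therefore $\deg(\varphi^*\cal F)=0$; since this holds for all curves, $c_1(\cal F)\equiv0$ and likewise $c_1(\cal E)\equiv0$. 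It remains to upgrade this to nefness. The key point — and the main obstacle — is to control the Harder--Narasimhan filtration under Frobenius: if $\cal F$ were not nef, restrict to a curve to get a quotient bundle of negative degree; iterating Frobenius, the minimal slope $\mu_{\min}$ of the restriction to a suitable Mehta--Ramanathan curve would tend to $-\infty$ (by Langer's bound on the instability of Frobenius pullbacks, or by the elementary fact on curves that a non-semistable bundle has Frobenius pullbacks with $\mu_{\min}$ decreasing geometrically), contradicting the boundedness from $(2)$ which forces all $\mu_{\min}((F_X^m)^*\cal E|_C)$ to stay in a fixed range. Thus each $(F_X^m)^*\cal E$ is slope semistable on every curve with $c_1\equiv0$, i.e.\ of degree $0$, and by Lemma \ref{Hartshorne-Burton} (Hartshorne--Barton) $\cal E|_C$ is nef for every curve $C$; by the curve-wise criterion for nefness this gives that $\cal E$ is nef, and applying the same argument to $\cal E^{\vee}$ (which also satisfies $(3)$, since dualizing commutes with Frobenius pullback and preserves algebraic equivalence) yields that $\cal E$ is numerically flat.

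I expect the delicate point to be making the last paragraph's ``$\mu_{\min}\to-\infty$'' argument fully rigorous in the setting of a possibly singular projective scheme $X$: one needs a Mehta--Ramanathan-type restriction statement and Langer's effective bound on the Frobenius-instability of a bundle on a curve (or the boundedness of $(2)$ translated into a uniform bound on slopes of sub/quotient bundles of restrictions), and then to chase that curve-wise semistability plus $c_1\equiv0$ back up to global numerical flatness. Everything else is bookkeeping with Frobenius functoriality, Grothendieck--Riemann--Roch, and the two curve lemmas quoted above.
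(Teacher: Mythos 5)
Your overall strategy---the cycle $(1)\Rightarrow(2)\Rightarrow(3)\Rightarrow(1)$, with $(1)\Rightarrow(2)$ via Corollary \ref{boundedness}, $(2)\Rightarrow(3)$ by pigeonhole applied to the finitely many connected components of a parameter scheme, and $(3)\Rightarrow(1)$ by amplifying instability under Frobenius and invoking Lemma \ref{Hartshorne-Burton}---is the same as the paper's. However, there is one genuine logical gap in your proof of $(3)\Rightarrow(1)$: at the decisive moment you derive a contradiction with ``the boundedness from $(2)$''. In the implication $(3)\Rightarrow(1)$ only $(3)$ is available as a hypothesis; you have not proved $(3)\Rightarrow(2)$, so appealing to $(2)$ there makes the cycle circular. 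The missing step is to extract boundedness directly from $(3)$: pulling back the algebraic equivalence between $(F_X^{m_1})^*\cal E$ and $(F_X^{m_2})^*\cal E$ by $F_X^m$ shows that $(F_X^{m_1+m})^*\cal E$ and $(F_X^{m_2+m})^*\cal E$ are algebraically equivalent for every $m\ge 0$, and iterating shows that all members of the cofinal subfamily $\{(F_X^{m_2+m(m_1-m_2)})^*\cal E\}_{m\ge 0}$ are algebraically equivalent to one another, hence form a bounded family. Running your ``$\mu_{\min}\to-\infty$'' (equivalently ``$\mu_{\max}\to+\infty$'') argument along this subsequence suffices, since the instability of $(F_C^m)^*(\varphi^*\cal E)$ still grows like $p^m$ along it; this is exactly how the paper reduces to the first part of the proof of \cite[Proposition 5.1]{Langerflat}.

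A smaller remark: your worry about needing a Mehta--Ramanathan type restriction theorem on the possibly singular $X$ is unfounded. Numerical flatness is tested on arbitrary morphisms $\varphi:C\to X$ from smooth projective curves; one pulls the bounded (sub)family back to $C$, where boundedness yields a uniform bound on the slopes of the Harder--Narasimhan factors, and the absolute Frobenius of $C$ commutes with $\varphi$. So the whole contradiction argument takes place on curves and Lemma \ref{Hartshorne-Burton} applies directly. Your degree computation giving $c_1(\cal E)\equiv 0$ from $(3)$ is correct and is the right first step.
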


\begin{proof}
	If $\cal E$ is numerically flat then all $\cal E_m=(F_X^m)^*\cal E$ are numerically flat, so 
	$(1)\Rightarrow (2)$ follows from Corollary \ref{boundedness}. 
	Assume $(2)$. Then by definition there exists a $k$-scheme $S$ of finite type and an $S$-flat coherent sheaf $\cal F$ on $X_S:=X\times _kS$ such that for every $m\in \mathbb Z$
	there exists a geometric  $k$-point $s_m$ in $S$ such that $\cal F_{X_{s_m}}\simeq \cal E_m$. Now $(3)$ follows by the pigeonhole principle applied to the finitely many connected components of $S$.
	Assume $(3)$. Then for all $m\ge 0$ the bundles $(F_X^{m_1+m})^*\cal E $ and $(F_X^{m_2+m})^*\cal E $ are algebraically equivalent. This implies that the family $\{ (F_X^{m_2+m(m_1-m_2)})^*\cal E  \}_{m\in \mathbb Z _{\ge 0}}$ is bounded. The implication $(3)\Rightarrow (1)$ follows as in the first part of proof of \cite[Proposition 5.1]{Langerflat}. 
\end{proof}

The following corollary  can be thought of as a generalization of \cite[Theorem 2.3]{De-We}
from finite fields to arbitrary perfect fields of positive characteristic. In case $X$ is smooth the result is contained in \cite[Corollary 3.7]{La-Chern}. 

\begin{cor}\label{cor-Bloch}
Let  $X$ be a projective scheme over a perfect field $k$ of positive characteristic. 
	Let $\cal E$ be a numerically flat vector bundle on $X$.
	Then for all $i>0$ the Chern classes $c_i (\cal E)$ are, up to torsion, algebraically equivalent to $0$.
\end{cor}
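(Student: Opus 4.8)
The plan is to combine the preceding theorem with two elementary facts: that algebraically equivalent vector bundles have algebraically equivalent Chern classes, and that the absolute Frobenius $F_X$ multiplies a codimension-$i$ class by $p^{i}$ after inverting $p$. First I would reduce to the case where $X$ is a variety of dimension $n$: nefness and numerical triviality of $c_1$ are both preserved by restriction to a closed subvariety, so $\cal E|_Y$ is numerically flat for every subvariety $Y\subseteq X$, and since $c_i(\cal E)\cap[Y]$ is the proper pushforward of $c_i(\cal E|_Y)\cap[Y]$ while proper pushforward preserves algebraic equivalence, it suffices to prove that $c_i(\cal E)\cap[X]\in A_{n-i}(X)$ is algebraically equivalent to $0$ up to torsion when $X$ is a variety of dimension $n$. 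Applying the preceding theorem to the numerically flat bundle $\cal E$ produces integers $m_1>m_2\ge 0$ with $(F_X^{m_1})^*\cal E$ and $(F_X^{m_2})^*\cal E$ algebraically equivalent, i.e.\ isomorphic to two fibres of a sheaf on $X\times S$ that is flat over a connected base $S$. Since Chern classes of locally free sheaves commute with arbitrary pullback and the Chern classes of the fibres of such a family are algebraically equivalent cycles (see \cite[Chapters 3, 10, and 17]{fulton84}), this gives
\[
c_i\big((F_X^{m_1})^*\cal E\big)\cap[X]\ \sim_{\mathrm{alg}}\ c_i\big((F_X^{m_2})^*\cal E\big)\cap[X]\qquad\text{in }A_{n-i}(X).
\]

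Next I would compute the effect of iterating the Frobenius on cycle classes. For a $d$-dimensional subvariety $V\subseteq X$ the absolute Frobenius of $X$ restricts to the absolute Frobenius of $V$, which is finite of degree $[k(V):k(V)^{p}]=p^{d}$ because $k$ is perfect (so a $p$-basis of $k(V)$ has exactly $\dim V$ elements); hence $F_{X*}$ acts on $A_d(X)$ as multiplication by $p^{d}$. Combining this with the projection formula $F_{X*}\big(c_i((F_X)^*\cal E)\cap[X]\big)=c_i(\cal E)\cap F_{X*}[X]$ and the identity $c_i((F_X)^*\cal E)=(F_X)^*c_i(\cal E)$, the two sides become $p^{\,n-i}\big(c_i((F_X)^*\cal E)\cap[X]\big)$ and $p^{\,n}\big(c_i(\cal E)\cap[X]\big)$; cancelling $p^{\,n-i}$ (legitimate after inverting $p$) and iterating $m$ times yields
\[
c_i\big((F_X^{m})^*\cal E\big)\cap[X]\ =\ p^{\,mi}\big(c_i(\cal E)\cap[X]\big)\qquad\text{in }A_{n-i}(X)\otimes\bb Q.
\]

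Feeding this into the algebraic equivalence from the first step gives $(p^{\,m_1 i}-p^{\,m_2 i})\big(c_i(\cal E)\cap[X]\big)\sim_{\mathrm{alg}}0$ modulo torsion; since $i>0$ and $m_1>m_2$ the integer $p^{\,m_1 i}-p^{\,m_2 i}$ is nonzero, so $c_i(\cal E)\cap[X]$ is itself algebraically equivalent to $0$ up to torsion, as claimed. I expect the only delicate point to be the scaling identity on a possibly singular $X$: there $F_X$ need not be flat, so $(F_X)^*$ is not available on cycles and one cannot argue by pulling back. The remedy above is to run everything through the pushforward $F_{X*}$, which is always defined, together with the projection formula, at the cost of losing powers of $p$ — harmless, since the conclusion is only asserted up to torsion. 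One could equally work with the relative Frobenius $X\to X^{(p)}$, which over a perfect field differs from $F_X$ only by the isomorphism $X^{(p)}\cong X$ induced by $F_k$.
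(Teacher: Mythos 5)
Your argument is correct and follows essentially the same route as the paper: invoke the preceding theorem to get $m_1>m_2$ with $(F_X^{m_1})^*\cal E$ and $(F_X^{m_2})^*\cal E$ algebraically equivalent, then use the Frobenius scaling of $c_i$ by $p^{i}$ to conclude that $(p^{im_1}-p^{im_2})c_i(\cal E)$ vanishes modulo algebraic equivalence and torsion. The only (harmless) divergence is that the paper asserts $c_i((F_X^m)^*\cal E)=p^{im}c_i(\cal E)$ directly in the operational ring $A^*(X)$, where pullback along arbitrary morphisms is defined, whereas you derive the same scaling on cycle classes via $F_{X*}$ and the projection formula to sidestep non-flatness of $F_X$ on a singular $X$.
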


\begin{proof}
	By the above theorem we know that  there exist $m_1>m_2\ge 0$ such that $(F_X^{m_1})^*\cal E $ and $(F_X^{m_2})^*\cal E $ are algebraically equivalent.
	Since $c_i ((F_X^{m})^*\cal E)=p^{im}c_i(\cal E)$ in $A^*(X)$, we get
	$$0=c_i ((F_X^{m_1})^*\cal E)-c _i ((F_X^{m_2})^*\cal E)= (p^{im_1}-p^{im_2})c_i(\cal E)$$
	in $B^*(X)$, so $c_i(\cal E)=0 $ in $B^*(X)_{\mathbb Q}$.
\end{proof}

\subsection{Characterizations of numerically flat bundles}

Let $X$ be a  proper scheme over an algebraically closed field $k$.

\begin{defn}
A vector bundle $\cal E$ on $X$ is called \emph{universally semistable} if for all $k$-morphisms $f: C\to X$ from smooth connected projective curves $C$ over $k$ the pullback $f^*\cal E$ is semistable. We say that $\cal E$
is \emph{Nori semistable} if it is universally semistable and $c_1(\cal E)$ is numerically trivial.
\end{defn}

To better justify the terminology, note that if $\cal E$ is universally semistable, and $f:Y\to X$ is a morphism from a projective manifold $Y$ over $k$, then for every polarization $H$ on $Y$ the pullback $f^*\cal E$ is $\mu_H$-semistable.

If  $f: Y\to X$ is a proper generically finite morphism between smooth projective varieties and $\cal E$ is a strongly slope $H$-semistable bundle on $X$ then 
$f^*\cal E$ is slope $f^*H$-semistable bundle. This motivates the following definition:

\begin{defn}
If $X$ is irreducible then we say that a vector bundle $\cal E$ on $X$ is \emph{strongly semistable} if there exist  a proper generically finite $k$-morphism $f: Y\to X$ from a smooth projective $k$-variety $Y$ to $X$ and an ample divisor $H$ on $Y$ such that the bundle $f^*\cal E $ is strongly slope $H$-semistable.
In general, we say that a vector bundle  $\cal E$ on $X$ is \emph{strongly semistable} if
its restriction to every irreducible component of $X$ is strongly semistable.
\end{defn}

A line bundle $\cal L$ is said to be $\tau$-trivial, if $\cal L^{\otimes m}$ is algebraically equivalent to $\cal O_X$ for some $m\geq 1$. This notion is equivalent to $\cal L$ being numerically trivial by \cite[Theorem 6.3]{Kleiman-Picard}.
Numerically flat bundles can be seen as a higher rank version of $\tau$-trivial bundles.
In the proof we use Theorem \ref{thm:main3}.

\begin{thrm}\label{main:proper-num-flat}
Let $X$ be a proper scheme over an algebraically closed field $k$. Let $\cal E$ be a rank $r$  vector bundle on $X$. Then the following conditions are equivalent:
\begin{enumerate}
\item $\cal E$ is numerically flat.
\item  $\cal E$ is Nori semistable. 
\item  $\cal E$ is strongly semistable and $c_j(\cal E)$ is numerically trivial for all $j>0$.
\item  $\cal E$ is strongly semistable and both $c_1(\cal E)$ and $c_2(\cal E )$ are numerically trivial.
\item $\cal E$ is strongly semistable and for every coherent sheaf $\cal F$ on $X$ we have
$\chi (X, \cal E\otimes \cal F)=r\cdot\chi (X, \cal F)$.
\end{enumerate}
\end{thrm}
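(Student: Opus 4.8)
The plan is to run the cycle $(1)\Rightarrow(3)\Rightarrow(4)\Rightarrow(1)$ together with $(1)\Leftrightarrow(2)$ and $(1)\Rightarrow(5)\Rightarrow(4)$. All arrows except $(4)\Rightarrow(1)$ are soft manipulations with semistability, nefness, and intersection numbers; the implication $(4)\Rightarrow(1)$ is where I would invoke the smooth projective case, Theorem \ref{thm:main3}, after a reduction.

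I would first dispatch $(1)\Leftrightarrow(2)$. If $\cal E$ is numerically flat then $c_1(\cal E)\equiv0$, since for every curve $C\subseteq X$ both $\deg(\cal E|_C)$ and $-\deg(\cal E|_C)=\deg(\cal E^{\vee}|_C)$ are nonnegative, and for any $f\colon C\to X$ from a smooth projective curve $f^*\cal E$ is nef of degree $0$, hence (strongly) semistable by Lemma \ref{Hartshorne-Burton}; so $\cal E$ is Nori semistable. Conversely, if $\cal E$ is Nori semistable I would show $\cal O_{\bb P(\cal E)}(1)$ is nef by testing on curves $\gamma$: a curve contracted by $\pi$ lies in a fibre $\bb P^{r-1}$, and for any other the normalization of $\gamma$ produces $\nu\colon\widetilde C\to X$ from a smooth projective curve together with a line-bundle quotient $\nu^*\cal E\twoheadrightarrow L$ with $\int_{\widetilde C}c_1(L)=\int_\gamma\xi\ge0$, the inequality because $\nu^*\cal E$ is semistable of slope $0$. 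So $\cal E$ is nef, and running the same argument for $\cal E^{\vee}$ (again Nori semistable, semistability being self-dual on curves) gives that $\cal E$ is numerically flat. For $(1)\Rightarrow(3)$ I would combine Proposition \ref{prop:numflat} (which yields $c_j(\cal E)\equiv0$ for $j>0$) with the observation that $\cal E$ is strongly semistable: on each reduced irreducible component $X_i$, for any generically finite proper $f_i\colon Y_i\to X_i$ with $Y_i$ smooth projective and any ample $H_i$, every twist $(F_{Y_i}^m)^*f_i^*\cal E=(f_i\circ F_{Y_i}^m)^*\cal E$ is $\mu_{H_i}$-semistable by universal semistability (from $(1)\Leftrightarrow(2)$, using its stability under pullback to polarized manifolds). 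The step $(3)\Rightarrow(4)$ is trivial.

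The heart of the argument is $(4)\Rightarrow(1)$, which I would reduce to Theorem \ref{thm:main3}. Fix a reduced irreducible component $X_i$ of $X$ and, by the definition of strong semistability, a generically finite proper $f_i\colon Y_i\to X_i$ from a smooth projective variety with $f_i^*\cal E$ strongly slope $H_i$-semistable for some ample $H_i$. Since $\cal E$ is locally free, $c_j(f_i^*\cal E)=f_i^*c_j(\cal E)$ (Remark \ref{rmk:chernpullback}), and the projection formula $\int_{Y_i}f_i^*\alpha\cap[Z]=\int_X\alpha\cap(f_i)_*[Z]$ shows that $c_1(f_i^*\cal E)$ and $\ch_2(f_i^*\cal E)=\tfrac12c_1^2(f_i^*\cal E)-c_2(f_i^*\cal E)$ are numerically trivial on $Y_i$; here one uses that a numerically trivial divisor class on any subvariety has vanishing self-intersection, so that $c_1^2(\cal E)\equiv0$ on $X$. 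Theorem \ref{thm:main3} then shows $f_i^*\cal E$ is numerically flat, hence $f_i^*\cal E$ and $(f_i^*\cal E)^{\vee}=f_i^*(\cal E^{\vee})$ are nef. Because $f_i$ is proper and surjective, so is the base-changed morphism $\bb P(f_i^*\cal E)\to\bb P(\cal E|_{X_i})$, which carries $\cal O(1)$ to $\cal O(1)$; since nefness of a line bundle descends along proper surjections and may be tested on the reduced irreducible components of a proper scheme, $\cal E$ and $\cal E^{\vee}$ are nef on $X$, i.e.\ $\cal E$ is numerically flat.

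Finally, for $(1)\Leftrightarrow(5)$: if $\cal E$ is numerically flat it is strongly semistable as above, and the singular Grothendieck--Riemann--Roch theorem \cite[Corollary 18.3.1]{fulton84} with Proposition \ref{prop:numflat} gives
\[\chi(X,\cal E\otimes\cal F)=\int_X\ch(\cal E)\cap\bigl(\ch(\cal F)\cap\td(X)\bigr)=r\int_X\ch(\cal F)\cap\td(X)=r\,\chi(X,\cal F),\]
because each $\ch_j(\cal E)$ with $j>0$ is a numerically trivial polynomial in the $c_i(\cal E)$ and therefore contributes $0$ when capped with the dimension-$j$ part of $\ch(\cal F)\cap\td(X)$. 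For $(5)\Rightarrow(4)$ I would apply the hypothesis to $\cal F=\imath_*(\cal O_Y\otimes L^{\otimes m})$ for closed subvarieties $\imath\colon Y\hookrightarrow X$ and ample $L$ on $Y$, getting $\chi(Y,\cal E|_Y\otimes L^{\otimes m})=r\,\chi(Y,L^{\otimes m})$ for all $m$; comparing coefficients of these polynomials in $m$ via asymptotic Riemann--Roch yields $\int_Cc_1(\cal E)=0$ for every curve $C$ and then $\int_S\ch_2(\cal E)=0$ for every surface $S$, hence $c_1(\cal E)\equiv0$ and $c_2(\cal E)\equiv0$, which with the strong semistability built into $(5)$ is exactly $(4)$. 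The main obstacle I expect is $(4)\Rightarrow(1)$: beyond citing Theorem \ref{thm:main3}, one has to carry strong semistability and the vanishing of $c_1$ and $\ch_2$ through a generically finite alteration of each component and then descend nefness back to $X$, which requires some care with possibly non-reduced or reducible $X$ and with the various "numerically trivial'' conventions.
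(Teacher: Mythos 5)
Your argument is correct and follows the same architecture as the paper's proof: both reduce everything to the smooth projective case (Theorem \ref{thm:main3}) via a generically finite cover of each irreducible component, use Proposition \ref{prop:numflat} for the vanishing of Chern classes, and use singular Grothendieck--Riemann--Roch for the Euler-characteristic condition. Two points of comparison. First, the one genuinely different sub-route is your treatment of $(5)$: the paper proves $(5)\Rightarrow(1)$ directly by applying the hypothesis to the sheaves $R^if_*\cal O_Y(mH)$ on $X$ and computing $\chi(Y,f^*\cal E(mH))=r\chi(Y,\cal O_Y(mH))$ on the alteration via the Leray spectral sequence and the projection formula, then invoking the implication $(4)\Rightarrow(2)$ of Theorem \ref{thm:main3}; you instead prove $(5)\Rightarrow(4)$ by testing against $\imath_*L^{\otimes m}$ for subvarieties $\imath\colon Y\hookrightarrow X$ and extracting $\int_Cc_1(\cal E)=0$ and $\int_S\ch_2(\cal E)=0$ from the coefficients of the resulting polynomials. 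Both work; yours stays on $X$ and avoids the spectral sequence, at the cost of a singular Riemann--Roch computation on each (possibly singular) subvariety, while the paper's route feeds directly into the Hilbert-polynomial condition $(4)$ of Theorem \ref{thm:main3}. Second, in $(1)\Rightarrow(3)$ you verify that \emph{any} generically finite proper $f_i\colon Y_i\to X_i$ from a smooth projective variety pulls $\cal E$ back to a strongly $\mu_{H_i}$-semistable bundle, but the definition of ``strongly semistable'' in the paper requires the \emph{existence} of such an $f_i$; you should cite de Jong's alteration theorem \cite{dejong96} here, as the paper does, to produce one for each reduced irreducible component. This is a citation rather than a mathematical gap, and the rest of your reductions (descent of nefness along proper surjections, reduction to reduced irreducible components, and the passage from $c_1\equiv0$, $c_2\equiv0$ to the hypotheses $\ch_1\cdot H^{n-1}=\ch_2\cdot H^{n-2}=0$ of Theorem \ref{thm:main3}) are all sound and match the paper.
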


\begin{proof}
	The equivalence of $(1)$ and $(2)$ is well known and it follows from Lemma \ref{Hartshorne-Burton} (see, e.g., \cite[1.2]{Langerflat}).
Assume that $\cal E$ is numerically flat and let $X_0$ be some irreducible component of $X$. By \cite{dejong96} there exists  a proper generically finite $k$-morphism $f: Y\to X_0$ from a smooth projective $k$-variety $Y$. Then  $f^*\cal E$ is numerically flat, so 
it is strongly $\mu_H$-semistable for every ample divisor $H$ on $X$. In particular, $\cal E$ is strongly semistable. Now implication $(1)\Rightarrow (3)$ follows from Proposition \ref{prop:numflat}.

In proof of the implication  $(3)\Rightarrow (5)$ we use singular Riemann--Roch \cite[Chapter 18]{fulton84}.
Let $f: X\to Y=\Spec k$ be the structural morphism. We denote by $[\cal E]$ the class of $\cal E$ in $K^0(X)$ and by $[\cal F]$ the class of $\cal F$ in $K_0(X)$. 
By \cite[Theorem 18.3]{fulton84} we  have canonical maps $\tau_X: K_0(X)\to A_*(X)_{\mathbb Q}$ and $\tau_Y: K_0(Y)\to A_*(Y)_{\mathbb Q}=\mathbb Q$, which satisfy the following equalities:
$$\chi (X, \cal E\otimes \cal F)=\tau _Yf_*([\cal E]\otimes [\cal F])= f_*\tau _X([\cal E]\otimes [\cal F])= f_*(\ch ([\cal E])\cap \tau _X([\cal F]))=r \tau _Yf_*([\cal F])=
r\chi (X, \cal F).$$

To prove that $(5)$ implies $(1)$ we can assume that $X$ is irreducible.
Then by assumption  there exist a proper generically finite $k$-morphism $f: Y\to X$ from a smooth projective $k$-scheme $Y$ to $X$ and an ample divisor $H$ on $Y$ such that the bundle $f^*\cal E $ is strongly slope $H$-semistable. By the Leray spectral sequence and the projection formula we have
\begin{align*}
	\chi (Y, f^*\cal E(mH))&=\sum _i(-1)^i \chi (X, R^if_*(f^*\cal E(mH)))=
	\sum _i(-1)^i \chi (X,\cal E\otimes  R^if_*\cal O_Y (mH))\\
	&=
	r\sum _i(-1)^i \chi (X,  R^if_*\cal O_Y (mH))= r \chi (Y, \cal O_Y (mH)).
\end{align*}
So by the implication $(4)\Rightarrow (2)$ of Theorem \ref{thm:main3} we see that 
$f^*\cal E$ is numerically flat. Since $f$ is surjective, this implies that $\cal E$
is also numerically flat.

The implication  $(3)\Rightarrow (4)$ is obvious, so it is sufficient to prove that  $(4)\Rightarrow (1)$. Without loss of generality we can assume that $X$ is irreducible and
there exist a proper generically finite $f: Y\to X$ from a smooth projective $k$-variety $Y$ and an ample divisor $H$ on $Y$ such that the bundle $f^*\cal E $ is strongly $\mu_H$-semistable. Then $c_1(f^*\cal E)$ and $c_2(f^*\cal E )$ are numerically trivial. So by the implication $(1)\Rightarrow (2)$ of Theorem \ref{thm:main3} we see that 
$f^*\cal E$ is numerically flat. As before this implies that $\cal E$
is also numerically flat.
\end{proof}

\begin{cor} \label{main:proper-universally-ss}
Let $\cal E$ be a vector bundle on $X$. Then the following conditions are equivalent:
	\begin{enumerate}
	\item $\cal E$ is strongly semistable and $\Delta(\cal E)$ is numerically trivial.
	\item $\cal E{\rm nd}\, \cal E$ is nef.
	\item  $\cal E$ is universally semistable. 
	\end{enumerate}
\end{cor}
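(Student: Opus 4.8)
\emph{The plan} is to deduce all three equivalences from Theorem~\ref{main:proper-num-flat} by passing from $\cal E$ to the bundle $\cal E{\rm nd}\, \cal E=\cal E\otimes\cal E^{\vee}$. First I would record two formal facts. Since $\det(\cal E\otimes\cal E^{\vee})\cong\cal O_X$, we have $c_1(\cal E{\rm nd}\, \cal E)\equiv 0$, so by the remark following the definition of numerical flatness the bundle $\cal E{\rm nd}\, \cal E$ is nef if and only if it is numerically flat. Secondly, taking the degree-two part of $\log\ch(\cal E\otimes\cal E^{\vee})=\log\ch(\cal E)+\log\ch(\cal E^{\vee})$ and using $\Delta(\cal E^{\vee})=\Delta(\cal E)$ gives $\Delta(\cal E{\rm nd}\, \cal E)=2r^{2}\Delta(\cal E)$; since $c_1(\cal E{\rm nd}\, \cal E)\equiv 0$ this means $c_2(\cal E{\rm nd}\, \cal E)=\Delta(\cal E)$ in $A^2(X)_{\mathbb Q}$, as already noted in the discussion of the discriminant. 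Thus condition~$(2)$ is literally ``$\cal E{\rm nd}\, \cal E$ is numerically flat'', and the hypothesis $\Delta(\cal E)\equiv 0$ translates into $c_2(\cal E{\rm nd}\, \cal E)\equiv 0$.

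\emph{The key step} is a dictionary: $\cal E$ is strongly semistable (resp.\ universally semistable) if and only if $\cal E{\rm nd}\, \cal E$ is. One direction is elementary. On an irreducible polarized variety, if a torsion free $V$ admits a subsheaf $\cal F\subsetneq V$ with $\mu_H(\cal F)>\mu_H(V)$, then tensoring $\cal F\hookrightarrow V$ with the locally free, hence flat, sheaf $V^{\vee}$ yields an inclusion $V^{\vee}\otimes\cal F\hookrightarrow V^{\vee}\otimes V=\cal E{\rm nd}\, V$ of slope $\mu_H(\cal F)-\mu_H(V)>0=\mu_H(\cal E{\rm nd}\, V)$, so $\cal E{\rm nd}\, V$ is not semistable. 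Applying this on each irreducible component, on the generically finite covers of \cite{dejong96} that define strong semistability, and to all Frobenius pullbacks (resp.\ to all pullbacks under morphisms from curves), gives ``$\cal E{\rm nd}\, \cal E$ strongly (resp.\ universally) semistable $\Rightarrow\cal E$ strongly (resp.\ universally) semistable''. For the converse I would use the theorem of Ramanan--Ramanathan \cite{RR84} (classical in characteristic zero): the tensor product of strongly slope semistable bundles on a smooth projective variety is strongly slope semistable. Hence $\cal E$ strongly semistable forces $\cal E\otimes\cal E^{\vee}$ strongly semistable. For universal semistability: if $\cal E$ is universally semistable, then for every $f\colon C\to X$ from a smooth projective curve the bundles $(F_C^m)^{*}f^{*}\cal E=(f\circ F_C^m)^{*}\cal E$ are all semistable, i.e.\ $f^{*}\cal E$ is strongly semistable on $C$, so by the curve case of \cite{RR84} (or by \cite{Miyaoka} and Lemma~\ref{Hartshorne-Burton}) $f^{*}(\cal E{\rm nd}\, \cal E)=f^{*}\cal E\otimes(f^{*}\cal E)^{\vee}$ is (strongly) semistable; thus $\cal E{\rm nd}\, \cal E$ is universally semistable.

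\emph{Putting this together}, I apply Theorem~\ref{main:proper-num-flat} to $\cal E{\rm nd}\, \cal E$. Its equivalence $(1)\Leftrightarrow(4)$ says $\cal E{\rm nd}\, \cal E$ is numerically flat iff it is strongly semistable with $c_1\equiv 0$ and $c_2\equiv 0$; as $c_1(\cal E{\rm nd}\, \cal E)\equiv 0$ is automatic, $c_2(\cal E{\rm nd}\, \cal E)\equiv 0\Leftrightarrow\Delta(\cal E)\equiv 0$, and $\cal E{\rm nd}\, \cal E$ strongly semistable $\Leftrightarrow\cal E$ strongly semistable, this reads exactly $(2)\Leftrightarrow(1)$. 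Its equivalence $(1)\Leftrightarrow(2)$ says $\cal E{\rm nd}\, \cal E$ is numerically flat iff it is Nori semistable, i.e.\ universally semistable with $c_1\equiv 0$; again $c_1(\cal E{\rm nd}\, \cal E)\equiv 0$ is automatic and $\cal E{\rm nd}\, \cal E$ universally semistable $\Leftrightarrow\cal E$ universally semistable, so this reads exactly $(2)\Leftrightarrow(3)$. Combining, $(1)\Leftrightarrow(2)\Leftrightarrow(3)$.

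\emph{The main obstacle} is the ``hard'' direction of the dictionary in the second paragraph: in contrast to the trivial subsheaf argument, passing from strong (or universal) semistability of $\cal E$ to that of $\cal E\otimes\cal E^{\vee}$ genuinely relies on the tensor-product theorem for strongly slope semistable bundles \cite{RR84}, and one must also check that this behaviour — together with numerical triviality of the relevant Chern classes — descends through the generically finite covers used to define strong semistability on singular schemes. Everything else is formal once Theorem~\ref{main:proper-num-flat} is in hand.
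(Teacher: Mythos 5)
Your proof is correct and follows essentially the same route as the paper: pass to $\cal E{\rm nd}\,\cal E$, observe that $c_1(\cal E{\rm nd}\,\cal E)\equiv 0$ automatically and $c_2(\cal E{\rm nd}\,\cal E)=\Delta(\cal E)$, and apply Theorem \ref{main:proper-num-flat}. The only cosmetic difference is that the paper runs the cycle $(1)\Rightarrow(2)\Rightarrow(3)\Rightarrow(1)$ and leaves the tensor-product/semistability dictionary (your ``hard direction'' via \cite{RR84}, or equivalently via Theorem \ref{thm:main4} on the generically finite cover) implicit, whereas you state it and prove it explicitly.
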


\begin{proof}
	If  $\cal E$ is strongly semistable and $\Delta(\cal E)\equiv 0$ then 
	$\cal E{\rm nd}\, \cal E$ is strongly semistable and both $c_1(\cal E{\rm nd}\, \cal E)$ and $c_2(\cal E{\rm nd}\, \cal E )$ are numerically trivial. So $\cal E{\rm nd}\, \cal E$ is numerically flat, which proves $(1)\Rightarrow (2)$.
	If $\cal E{\rm nd}\, \cal E$ is nef then it is also numerically flat (as it isomorphic to its dual) and hence it is Nori semistable. This implies $(3)$.
	To prove that  $(3)\Rightarrow (1)$ it is sufficient to prove that $\Delta(\cal E)\equiv 0$. But $\cal E{\rm nd}\, \cal E$ is universally semistable with trivial determinant, so it is Nori semistable. Hence it is numerically flat and
	$\Delta(\cal E)=c_2(\cal E{\rm nd}\, \cal E) \equiv 0$.
\end{proof}

\section{Algebraic proofs of Theorems \ref{thm:main} and \ref{thm:main2}}

\subsection{Restriction theorems}

In the sequel we frequently use the following strengthening of the Mehta--Ramanathan theorem for sheaves with vanishing discriminant. The result follows from \cite[Theorem 5.2]{Langer04} with a different proof from the Mehta--Ramanathan theorem.

\begin{lem}\label{restriction}
Let $X$ be a smooth projective variety defined over an algebraically closed field $k$
and let $H$ be an ample divisor class on $X$. Let $\cal E$ be a torsion free sheaf with $\Delta(\cal E)\equiv 0$. Then there exists $m_0=m_0(X,H,{\rm rk}\,\cal E)\geq 1$ such that for all $m\geq m_0$
\begin{enumerate}
	\item If $\cal E$ is strongly $\mu_H$-stable and $\cal E_D$ is torsion free for some normal divisor $D\in |mH|$,
	then $\cal E_D$ is strongly $\mu _{H_D}$-stable.
	\item If $\cal E$ is strongly $\mu_H$-semistable, 
	then for general $D\in |mH|$  the restriction $\cal E_D$ is strongly $\mu_{H_D}$-semistable.
\end{enumerate}  
\end{lem}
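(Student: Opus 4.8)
I would prove Lemma~\ref{restriction} by combining the general Mehta--Ramanathan-type estimates of \cite[Theorem 5.2]{Langer04} with the vanishing of the discriminant, which forces the relevant correction terms to disappear, so that the bound on the degree $m$ becomes independent of the (now vanishing) discriminant and the argument collapses to a clean restriction statement. The key observation is that the obstruction to a restriction theorem in the usual Mehta--Ramanathan setting is a quantity bounded above in terms of $\mu_{\max}-\mu_{\min}$ of the Harder--Narasimhan filtration of the restriction, which in turn is controlled by the discriminant via Langer's numerical inequalities. When $\Delta(\cal E)\equiv 0$, these inequalities give the maximal possible sharpness, which is exactly what we need.

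\smallskip

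\noindent\textbf{Step 1 (reduce to the stable case).} First I would record that for torsion-free $\cal E$ with $\Delta(\cal E)\equiv 0$, strong $\mu_H$-semistability passes to the associated graded pieces of the Jordan--H\"older filtration, each of which is strongly $\mu_H$-stable with $\Delta\equiv 0$ as well (additivity of $\Delta/2r^2$ in a short exact sequence, together with the fact that on $X$ each summand must individually satisfy Bogomolov's inequality, hence each must have $\Delta\equiv 0$). One checks that if each graded piece restricts to a strongly stable bundle on $D$ with the same slope, then $\cal E_D$ is strongly semistable on $D$; and a general hyperplane section avoids the locus where the extensions degenerate the restriction of the filtration. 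So (2) follows from (1) for the stable graded pieces, once we handle (1) for all of them simultaneously with a uniform $m_0$ depending only on $X$, $H$, and $\mathrm{rk}\,\cal E$ (the number of graded pieces and their ranks are bounded by $\mathrm{rk}\,\cal E$).

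\smallskip

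\noindent\textbf{Step 2 (the stable restriction).} For (1): suppose $\cal E$ is strongly $\mu_H$-stable with $\Delta(\cal E)\equiv 0$ and $\cal E_D$ is torsion-free for some normal $D\in|mH|$. If $\cal E_D$ were not $\mu_{H_D}$-semistable, its maximal destabilizing subsheaf $\cal F\subset\cal E_D$ would have $\mu_{H_D}(\cal F)>\mu_{H_D}(\cal E_D)$, and Langer's restriction machinery bounds $\mu_{\max,H_D}(\cal E_D)-\mu_{H_D}(\cal E_D)$ from above by a constant times $\frac{\Delta(\cal E)\cdot H^{n-2}}{m}$ plus terms that are $o(1)$ in $m$ — all of which vanish once $\Delta(\cal E)\equiv 0$ and $m\gg 0$. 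The subtlety is that we need this for all Frobenius pullbacks $(F_D^k)^*(\cal E_D)$ at once. Here I would invoke that Frobenius pullback commutes with restriction up to the obvious twist, that $\Delta((F_X^k)^*\cal E)=p^{2k}\Delta(\cal E)\equiv 0$, and that $m_0$ in Langer's theorem can be chosen uniformly over the family $\{(F_X^k)^*\cal E\}$ because their discriminants all vanish and the only remaining input is $\mathrm{rk}$, $X$, $H$ — not the individual bundles. This uniformity is exactly the content one extracts from \cite[Theorem 5.2]{Langer04}: the effective bound there, specialized to vanishing discriminant, does not see $k$.

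\smallskip

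\noindent\textbf{The main obstacle.} The hard part is precisely this last uniformity over the Frobenius tower in positive characteristic. In the classical Mehta--Ramanathan theorem the bound on $m_0$ depends on the sheaf in a way that is not obviously stable under Frobenius pullback, and naively applying it infinitely often gives no uniform $m_0$. The resolution is that Langer's sharper estimates express the failure of restriction purely through $\Delta\cdot H^{n-2}$ and a bounded combinatorial quantity (ranks of HN-pieces), so once $\Delta\equiv 0$ the dependence on the bundle drops out entirely and a single $m_0=m_0(X,H,\mathrm{rk}\,\cal E)$ works for every member of the Frobenius tower. I would make sure to cite \cite[Theorem 5.2]{Langer04} at exactly the point where this effective, discriminant-only bound is used, rather than re-deriving it.
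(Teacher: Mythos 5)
Your overall route coincides with the paper's: part (1) is read off from \cite[Theorem 5.2]{Langer04}, whose lower bound on $m$ involves $\Delta(\cal E)\cdot H^{n-2}$ and otherwise only $X$, $H$ and the rank (plus the constant $\beta_r$ in positive characteristic), so that vanishing of the discriminant --- preserved up to the factor $p^{2k}$ under Frobenius pullback --- makes $m_0$ uniform; and part (2) is reduced to the stable case via a Jordan--H\"older filtration. That skeleton is correct.

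The gap is in your Step 1, and it sits exactly where the paper has to do real work. In positive characteristic the Jordan--H\"older factors of a strongly $\mu_H$-semistable sheaf are $\mu_H$-stable and strongly $\mu_H$-semistable, but they need \emph{not} be strongly $\mu_H$-stable: a $\mu_H$-stable sheaf can lose stability (while remaining semistable) after a Frobenius pullback --- this is precisely why ``strongly'' is a nontrivial adjective. So you cannot apply (1) to the graded pieces of $\cal E$ itself. Relatedly, proving that $\cal E_D$ is \emph{strongly} semistable requires semistability of $((F_X^s)^*\cal E)_D$ for every $s\ge 0$ with a \emph{single} general $D$; each $s$ carries its own Jordan--H\"older filtration and its own torsion-freeness condition on $D$, and a countable intersection of dense opens may be empty over $\overline{\bb F}_p$. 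The paper resolves both points at once: there is an $s_0$ such that the Jordan--H\"older factors of $(F_X^{s_0})^*\cal E$ \emph{are} strongly $\mu_H$-stable; one imposes the finitely many genericity conditions needed for the levels $s\le s_0$, and for $s\ge s_0$ the factors of $(F_X^{s})^*\cal E$ are Frobenius pullbacks of those of $(F_X^{s_0})^*\cal E$, so their restrictions to that same $D$ stay torsion free and (1) applies. Your ``main obstacle'' paragraph correctly senses that something must be made uniform over the Frobenius tower, but locates the difficulty in the bound $m_0$ (which $\Delta\equiv 0$ disposes of automatically) rather than in the stabilization of the Jordan--H\"older filtration and the finiteness of the genericity conditions. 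A smaller slip: $\Delta/2r^2$ is additive for tensor products, not in short exact sequences; for the factors one uses the Hodge index identity expressing $\Delta(\cal E)/r$ as $\sum_i\Delta(\cal E_i)/r_i$ minus a nonnegative cross term, together with Bogomolov, to conclude $\Delta(\cal E_i)\cdot H^{n-2}=0$.
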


\begin{proof}In characteristic zero pick $m_0$ such that $|mH|$ is basepoint free for all $m\geq m_0$. In positive characteristic we also need to exceed a constant $\beta_r$ depending on $X$, $H$, and ${\rm rk}\,\cal E$. See the inequality in \cite[Theorem 5.2]{Langer04}.
(1) follows immediately from \cite[Theorem 5.2]{Langer04}. 
We obtain (2) as a consequence of (1) as in \cite[Corollary 5.4]{Langer04}. 
The factors (successive quotients) in any Jordan--H\"older filtration of $\cal E$ are $\mu_H$-stable. As in \cite[Theorem 5.4]{Langer04} we observe that they also have numerically trivial discriminant. 
Their restriction to a general $D$ in a basepointfree $|mH|$ is again torsion free.
In characteristic zero then (2) follows from (1).
In positive characteristic strong $\mu_H$-semistability also takes into account the countably many Frobenius pullbacks.
We remark that there exists some $s_0$ such that the factors in a Jordan--H\"older filtration of $(F_X^{s_0})^*\cal E$ are strongly $\mu_H$-stable. Then for a general divisor  $D\in |mH|$ the restrictions of the factors in a Jordan--H\"older filtration of the sheaves $\{(F_X^s)^*\cal E\} _{s\le s_0}$ to $D$  are torsion free. The restrictions of the factors in a Jordan--H\"older filtration of  $(F_X^s)^*\cal E$ for all $s\ge s_0$ to such a divisor are also torsion free since they are pullbacks of those of $(F_X^{s_0})^*\cal E$.   
\end{proof}

\subsection{The surface case}

\begin{prop}\label{prop:surfacetocurve}
	Let $(X,H)$ be an amply polarized smooth projective surface defined over an algebraically closed field $k$. Let $\cal E$ be a strongly $\mu_H$-semistable locally free sheaf of rank $r$ with $c_i(\cal E)\equiv 0$ for $i=1,2$. Then
	\begin{enumerate}
		\item For any line bundle $L$  on $X$ and any $i\in\{0,1,2\}$
		we have $h^i(X,L\otimes\Sym^m\cal E)=O(m^{r-1})$.
		\item $\cal E$ is numerically flat.
	\end{enumerate}
\end{prop}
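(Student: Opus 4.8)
The plan is to prove (1) first and then deduce (2) using Lemma \ref{Hartshorne-Burton}. For (1), the strategy is to cut down to a curve by a general member $C \in |mH|$ (for $m$ large enough to apply the restriction theorem Lemma \ref{restriction}), and to combine Riemann--Roch on the surface with the cohomology bounds available on curves. Concretely, since $\Delta(\cal E)\equiv 0$ and $c_1(\cal E)\equiv 0$ force $c_2(\cal E)\equiv 0$ (as noted in the determinant/discriminant discussion), every Chern number of $\Sym^m\cal E$ that appears in $\chi(X, L\otimes \Sym^m\cal E)$ involving $c_2(\cal E)$ or $c_1^2(\cal E)$ vanishes numerically; one computes that $\chi(X, L\otimes\Sym^m\cal E)$ is a polynomial in $m$ whose top-degree terms (degrees $r+1$ and $r$, coming from $\mathrm{ch}_2$ and $\mathrm{ch}_1$ of $\Sym^m\cal E$ against $\td X$ and $\mathrm{ch}(L)$) have coefficients that are polynomial expressions in $c_1(\cal E)^2$, $c_2(\cal E)$, and $c_1(\cal E)\cdot(\text{divisor classes})$, all numerically trivial. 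Hence $\chi(X,L\otimes\Sym^m\cal E)=O(m^{r-1})$. This controls the alternating sum; to bound the individual $h^i$ one uses that $h^2(X, L\otimes\Sym^m\cal E)=h^0(X, K_X\otimes L^{\vee}\otimes (\Sym^m\cal E)^{\vee})$ by Serre duality, so it is enough to bound $h^0$ of a symmetric power of a (strongly semistable) bundle twisted by a fixed line bundle, and similarly for the dual.

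For the $h^0$ bound I would restrict to a general curve $C\in|mH|$: the exact sequence
\[
0\to L\otimes\Sym^m\cal E\otimes\cal O_X(-C)\to L\otimes\Sym^m\cal E\to (L\otimes\Sym^m\cal E)_C\to 0
\]
gives $h^0(X, L\otimes\Sym^m\cal E)\le h^0(X, L\otimes\Sym^m\cal E\otimes\cal O_X(-C)) + h^0(C, (L\otimes\Sym^m\cal E)_C)$. On $C$, Lemma \ref{restriction}(2) says $\cal E_C$ is strongly $\mu$-semistable, hence so is $\Sym^m\cal E_C$; since $c_1(\cal E)\equiv 0$ this restricted symmetric power has degree $0$, so by Lemma \ref{bound-on-sections} its $h^0$ (even after twisting by the fixed bundle $L_C$, whose degree grows only linearly in $m$) is $O(m)\cdot\mathrm{rk}(\Sym^m\cal E_C)=O(m^r)$ — not yet good enough, so one iterates: the term $h^0(X,L\otimes\Sym^m\cal E\otimes\cal O_X(-C))$ must be made to vanish by choosing $C$ ample enough (here $\cal O_X(C)=\cal O_X(mH)$ is very positive and $\Sym^m\cal E$ is, by (2) once known, or directly by Lemma \ref{bound-on-sections} applied on curves, "bounded" in the sense that it has no sections of a sufficiently negative twist). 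I would instead bound $h^0(X,L\otimes\Sym^m\cal E)$ directly by restricting to a general curve in a \emph{fixed} linear system $|H|$ (independent of $m$): then $h^0(X,L\otimes\Sym^m\cal E)\le h^0(X, (L-H)\otimes\Sym^m\cal E)+h^0(C,(L\otimes\Sym^m\cal E)_C)$, and the first term decreases under repeated subtraction of $H$ until it vanishes (as $\Sym^m\cal E$ is slope-semistable, sections of sufficiently negative twists vanish after at most $O(m)$ steps — but each step only contributes $h^0$ on a curve of size $O(m^{r-1})$ by Lemma \ref{bound-on-sections} since $(L-jH)_C$ eventually has negative degree). Summing $O(m)$ contributions of size $O(m^{r-1})$ gives $O(m^r)$; sharpening requires observing that only $O(1)$ of those steps have a nonnegative-degree restriction, which gives the desired $O(m^{r-1})$. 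The analogous argument bounds $h^2$ via Serre duality, and then $h^1=O(m^{r-1})$ follows from the Euler characteristic estimate.

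Given (1), part (2) follows quickly: by Lemma \ref{restriction}(2) the restriction $\cal E_C$ to a general curve $C\in|mH|$ is strongly semistable of degree $0$, hence nef by Lemma \ref{Hartshorne-Burton}, so $\cal E_C$ is numerically flat and in particular so is $\Sym^m\cal E_C$ — but to conclude that $\cal E$ itself is nef on $X$ one needs nefness of $\xi=c_1(\cal O_{\bb P(\cal E)}(1))$ against \emph{every} curve in $\bb P(\cal E)$, not merely curves lying over general members of $|mH|$. Here the bound from (1) is exactly what is needed: $h^0(\bb P(\cal E),\xi^{\otimes m}\otimes\pi^*L)=h^0(X,L\otimes\Sym^m\cal E)=O(m^{r-1})$ for every line bundle $L$; if $\xi$ were not nef there would be an irreducible curve $\Gamma\subset\bb P(\cal E)$ with $\xi\cdot\Gamma<0$, and intersecting with $\pi^*H$ and using the Hartshorne-type characterization (a line bundle whose restriction to a spanning family of curves, or whose twists have the expected small cohomology, is nef) leads to a contradiction — more directly, one shows $\cal E^{\vee}$ is nef by the same symmetric-power cohomology bound applied to $\cal E^{\vee}$ (whose Chern classes are also numerically trivial), and nef $+$ $c_1\equiv 0$ gives numerically flat by the Remark following the definition of numerically flat. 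The main obstacle I anticipate is making the cohomology-growth bound in (1) genuinely $O(m^{r-1})$ rather than the naive $O(m^r)$: this is where one must exploit semistability carefully (via Lemma \ref{bound-on-sections} on curves and the vanishing of the relevant Chern numbers) rather than just positivity, and it is the crux that distinguishes this from the easy estimate.
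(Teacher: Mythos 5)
Your part (1) is essentially the paper's argument and is sound: restrict to a general member of a fixed linear system $|H|$ (after replacing $H$ once by a suitable multiple), bound the curve term by Lemma \ref{bound-on-sections} using that $(\Sym^m\cal E)_D$ is strongly semistable of degree $0$ (so that term is $O({\rm rk}\,\Sym^m\cal E)=O(m^{r-1})$), and kill the twisted term $h^0(X,(L-jH)\otimes\Sym^m\cal E)$ because it is a $\mu_H$-semistable sheaf of negative slope. Your worry about ``$O(m)$ steps'' is a red herring that you then correct: since $c_1(\cal E)\equiv 0$ forces $\mu_H(\Sym^m\cal E)=0$ for \emph{all} $m$, the number of steps is bounded independently of $m$ (the paper rescales $H$ once so that a single step suffices). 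Two small caveats: in positive characteristic $(\Sym^m\cal E)^{\vee}\not\simeq\Sym^m(\cal E^{\vee})$ in general, so the $h^2$ bound is not literally the $h^0$ bound ``applied to the dual''; one reruns the argument for the strongly semistable slope-$0$ bundle $(\Sym^m\cal E)^{\vee}$ itself. And the estimate $\chi(X,L\otimes\Sym^m\cal E)=O(m^{r-1})$ is cleanest when computed on $\bb P(\cal E)$, where $c_1(\cal E)\equiv c_2(\cal E)\equiv 0$ gives $\xi^{r}=0$.

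Part (2) has a genuine gap. You correctly see that nefness must be tested against every curve and that the bound from (1) is the tool, but you never supply the mechanism converting ``$f^*\cal E$ is not nef'' into a contradiction with (1). The needed chain is: for $f:C\to X$ with image $C'$, if $f^*\cal E$ is not strongly semistable then, being of degree $0$, some Frobenius pullback has a destabilizing subsheaf of positive degree, hence an ample subbundle; therefore $c_1(\cal O_{\bb P(f^*\cal E)}(1))$ is \emph{big}, and since bigness is preserved under the dominant generically finite map $\bb P(f^*\cal E)\to\bb P(\cal E_{C'})$, the dimension count forces $h^0(C',\Sym^m\cal E_{C'})\geq c\,m^{r}$ for some $c>0$. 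On the other hand the restriction sequence $0\to(\Sym^m\cal E)(-C')\to\Sym^m\cal E\to\Sym^m\cal E_{C'}\to 0$ gives $h^0(C',\Sym^m\cal E_{C'})\le h^0(X,\Sym^m\cal E)+h^1(X,(\Sym^m\cal E)(-C'))$, and both terms are $O(m^{r-1})$ by (1) --- which is exactly why (1) must be proved for $i=1$ and for an arbitrary line bundle $L$ (here $L=\cal O_X(-C')$). Your ``more direct'' alternative --- apply the cohomology bound to $\cal E^{\vee}$ and conclude $\cal E^{\vee}$ is nef --- is circular: an $O(m^{r-1})$ section bound does not by itself yield nefness, and there is no ``Hartshorne-type characterization'' of nefness by small cohomology of twists to invoke; the instability-implies-bigness dichotomy on the image curve is the missing ingredient.
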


\begin{proof}
Without loss of generality we can assume that  $H$ is very ample.
Since $\dim\bb P(\cal E)=r+1$, the claimed growth rate in $(1)$ is two degrees lower than expected. The proof is similar to the proof of the Bogomolov inequality in \cite[Theorem 7.3.1]{HL10}.

By \cite[Theorem 3.23 and the remark at the end of Section 4]{RR84} the bundle
$\Sym^m\cal E$ is strongly $\mu_H$-semistable. Since 	$\Delta (\Sym^m\cal E)\equiv 0$ Lemma \ref{restriction} implies that if $D\in |H|$ is a general divisor then $(\Sym^m\cal E)_D$ is strongly semistable of degree $0$.
By Lemma \ref{bound-on-sections} we have
$$ h^0(D,(L\otimes \Sym^m\cal E)_D)\le  (1+\deg L_D) \cdot {\rm rk}\, \Sym^m\cal E  =O(m^{r-1})$$
From the short exact sequence
\[0\to(L\otimes\Sym^m\cal E)(-H)\to L\otimes\Sym^m\cal E\to (L\otimes\Sym^m\cal E)_D\to 0\]
we have 
$$h^0(X,L\otimes\Sym^m\cal E)\leq h^0(D,(L\otimes\Sym^m\cal E)_D)+h^0(X,(L\otimes\Sym^m\cal E)(-H)).$$ 
Changing $H$ by its multiple (which does not depend on $m$) if necessary, we can assume that $L(-H)$ has negative degree with respect to $H$ (e.g., we can assume that $L^{\vee}(H)$ is effective).
Then the bundle $(L\otimes \Sym^m\cal E)(-H)$ is $\mu_H$-semistable with negative slope so it does not have any nonzero sections. Therefore $h^0(X,L\otimes \Sym^m\cal E)=O(m^{r-1})$.

By Serre's duality, $h^2(X,L\otimes\Sym^m\cal E)=h^0(X,(\Sym^m\cal E)^{\vee}\otimes\omega_X\otimes L^{\vee})$. The bundle $(\Sym^m\cal E)^{\vee}$ is also strongly semistable with numerically trivial Chern classes.  An analogous proof to the case $i=0$ gives  
$h^2(X,L\otimes \Sym^m\cal E)=O(m^{r-1})$. Note that in positive characteristic this equality does not follow formally from the previous case applied to $\cal E^{\vee}$.

Finally, to prove that $h^1(X,L\otimes\Sym^m\cal E)$ grows at most like $O(m^{r-1})$,
it is sufficient to prove that  $\chi(X,L\otimes\Sym^m\cal E)=O(m^{r-1})$.
	Let $\pi: \bb P(\cal E)\to X$ be the bundle map  and let us set $\xi\coloneqq c_1(\cal O_{\bb P(\cal E)}(1))$.
Since the Chern classes $c_1(\cal E)$ and $c_2(\cal E)$ are both numerically trivial, we have $\xi ^{r}=0$. By the Riemann--Roch theorem
\[\chi(X,L\otimes\Sym^m\cal E)=\chi(\bb P(\cal E),\cal O_{\bb P(\cal E)}(m)\otimes\pi^*L)=\int_{\bb P(\cal E)}\exp(m\xi+\pi^*c_1(L))\cap\td \bb P(\cal E),\]
so the coefficients of $m^{r+1}$ and $m^r$ in the expression above are $0$. The claim is proved.
\smallskip

$(2)$.  Let  $f:C\to X$ be a morphism from a smooth projective curve and let $C'\subset X$ be the (possibly singular) image of $f$.
Consider the restriction sequence $$0\to(\Sym^m\cal E)(-C')\to\Sym^m\cal E\to\Sym^m\cal E_{C'}\to 0.$$ Let $\xi_{C}=c_1(\cal O_{\bb P(f^*\cal E)}(1))$. If $f^*\cal E$ is not strongly semistable, then since it has degree 0, it follows from Lemma \ref{Hartshorne-Burton} that $\xi_C$ is not nef. It is however big since some Frobenius pullback of $f^*\cal E$ has a strongly semistable subbundle of positive degree, so an ample subbundle. Bigness is invariant under birational pullback (cf.~\cite[Chapter 2.2]{laz04}) and even under dominant generically finite pullback, hence $h^0(C',\Sym^m \cal E_{C'})=h^0(\bb P(\cal E_{C'}),\cal O_{\bb P(\cal E_{C'})}(m))$ grows like $O(m^r)$. 
	However, we have  $$h^0(C',\Sym^m\cal E_{C'})\leq h^0(X,\Sym^m\cal E)+h^1(X,(\Sym^m\cal E)(-C')).$$ We get a contradiction from part $(1)$.
\end{proof}

\subsection{Local freeness via the vanishing of the discriminant}

\begin{lem}\label{lem:pulltoproduct}
	Let $(X,H)$ and $(Y,A)$ be amply polarized smooth projective varieties defined over an algebraically closed field $k$. Let $\cal E$ be a $\mu_H$-semistable ($\mu_H$-stable, strongly $\mu_H$-semistable or strongly $\mu_H$-stable) torsion free  sheaf on $X$. 
	Then ${\rm pr}_X^*\cal E$ is $\mu_L$-semistable (respectively $\mu_L$-stable, strongly $\mu_L$-semistable or strongly $\mu_L$-stable) for $L={\rm pr}_X^*H+{\rm pr}_Y^*A$. 
\end{lem}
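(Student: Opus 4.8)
The plan is to reduce the strong cases to the ordinary ones by compatibility of pullback with Frobenius, and then to bound the slope of a saturated subsheaf of ${\rm pr}_X^*\cal E$ by restricting it to general fibers of the two projections. Set $Z=X\times Y$, $p={\rm pr}_X$, $q={\rm pr}_Y$, $n=\dim X$, $e=\dim Y$. The absolute Frobenius of $Z$ is $F_X\times F_Y$, so $p\circ F_Z^m=F_X^m\circ p$ and therefore $(F_Z^m)^*p^*\cal E=p^*(F_X^m)^*\cal E$ for all $m$. Thus the strong statements follow by applying the non-strong ones to each $(F_X^m)^*\cal E$ with the fixed divisor $L$, and it is enough to show that $p^*\cal E$ is $\mu_L$-semistable (resp.\ $\mu_L$-stable) whenever $\cal E$ is $\mu_H$-semistable (resp.\ $\mu_H$-stable). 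Note that $p^*\cal E$ is torsion free, $p$ being flat with $Z$ integral.

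First I would record the shape of $\mu_L$. Expanding $L^{n+e-1}=(p^*H+q^*A)^{n+e-1}$ and using $p^*H^i=0$ for $i>n$, $q^*A^j=0$ for $j>e$, only the terms $i=n-1$ and $i=n$ survive, so for any torsion free sheaf $\cal G$ of rank $s$ on $Z$,
\[
c_1(\cal G)\cdot L^{n+e-1}
=\binom{n+e-1}{e}\,c_1(\cal G)\cdot p^*H^{n-1}\cdot q^*A^{e}
+\binom{n+e-1}{e-1}\,c_1(\cal G)\cdot p^*H^{n}\cdot q^*A^{e-1}.
\]
By the projection formula together with Remark~\ref{rmk:chernpullback}(3) — cutting $Y$, resp.\ $X$, by general members of a very ample multiple of $A$, resp.\ $H$, which merely rescales the intersection numbers — the first summand is $(A^{e})\cdot\deg_H(\cal G|_{X\times\{y\}})$ for a general $y\in Y$, and the second is $(H^{n})\cdot\deg_A(\cal G|_{\{x\}\times Y})$ for a general $x\in X$; here $\cal G|_{X\times\{y\}}$ and $\cal G|_{\{x\}\times Y}$ are torsion free of rank $s$ for general $x,y$ by iterating \cite[Corollary~1.1.14]{HL10}. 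Taking $\cal G=p^*\cal E$ gives $\mu_L(p^*\cal E)=\binom{n+e-1}{e}(A^{e})\,\mu_H(\cal E)$, a fixed positive multiple of $\mu_H(\cal E)$.

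Then I would estimate. Let $0\neq\cal G\subsetneq p^*\cal E$ be a saturated subsheaf; being saturated and proper it has rank $s<r$. For general $y$ the inclusion restricts to $\cal G|_{X\times\{y\}}\hookrightarrow(p^*\cal E)|_{X\times\{y\}}=\cal E$, so $\mu_H$-semistability of $\cal E$ yields $\deg_H(\cal G|_{X\times\{y\}})\leq s\,\mu_H(\cal E)$. For general $x$ the bundle $(p^*\cal E)|_{\{x\}\times Y}$ is the trivial bundle $\cal O_Y^{\oplus r}$ — $x$ lies in the locally free locus of $\cal E$, and $p^*c_1(\cal E)$ restricts to $0$ on the fiber — so $\cal G|_{\{x\}\times Y}$ is a torsion free subsheaf of $\cal O_Y^{\oplus r}$. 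Taking $\bigwedge^{s}$ gives an injection $\det(\cal G|_{\{x\}\times Y})\hookrightarrow\cal O_Y^{\oplus\binom{r}{s}}$ (injective because it is so at the generic point), hence $\det(\cal G|_{\{x\}\times Y})\cong\cal O_Y(-E)$ for some effective divisor $E$, and $\deg_A(\cal G|_{\{x\}\times Y})=-A^{e-1}\cdot E\leq 0$. Substituting both bounds into the display gives $\mu_L(\cal G)\leq\binom{n+e-1}{e}(A^{e})\,\mu_H(\cal E)=\mu_L(p^*\cal E)$, so $p^*\cal E$ is $\mu_L$-semistable. If $\cal E$ is moreover $\mu_H$-stable, then $\cal G|_{X\times\{y\}}$ has rank $s<{\rm rk}\,\cal E$, stability makes the first bound strict, and $\mu_L(\cal G)<\mu_L(p^*\cal E)$; hence $p^*\cal E$ is $\mu_L$-stable. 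With the Frobenius reduction this also settles the two strong cases.

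The hard part will be the bookkeeping in the second step: one must check that restricting the torsion free sheaf $\cal G$ to a general fiber of $p$ or $q$ — that is, to a general complete intersection of pulled-back very ample divisors — preserves torsion freeness and rank and is compatible with $c_1$, so that the two surviving terms of $c_1(\cal G)\cdot L^{n+e-1}$ genuinely compute the $H$- and $A$-degrees of the fibered restrictions. Granting this, the argument is a formal consequence of semistability of $\cal E$ and the triviality of $(p^*\cal E)|_{\{x\}\times Y}$. (Alternatively one may replace ``general fiber over $Y$'' by ``generic fiber'' and argue over $k(Y)$, using that $\mu_H$-semistability is preserved under base field extension; controlling the $q^*A$-component of $c_1(\cal G)$ is then the only additional ingredient, handled exactly as above.)
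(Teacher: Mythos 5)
Your proposal is correct and follows essentially the same route as the paper: expand $L^{n+e-1}$ into the two surviving terms, bound the $H$-degree of the restriction to a general $X\times\{y\}$ by semistability of $\cal E$ and the $A$-degree of the restriction to a general $\{x\}\times Y$ by its being a subsheaf of $\cal O_Y^{\oplus r}$, with strictness in the stable case and Frobenius compatibility for the strong cases. The only cosmetic difference is that you spell out the nonpositivity of the second degree via the determinant embedding, where the paper simply invokes semistability of the trivial bundle.
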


\begin{proof}
	Let $r={\rm rk}\,\cal E$ and denote $n=\dim X$ and $m=\dim Y$.
	Let $\cal F\subseteq{\rm pr}_X^*\cal E$ be a subsheaf of rank less than $r$. For $x\in X$ and $y\in Y$, let $\cal F_x\coloneqq\cal F_{\{x\}\times Y}$ and $\cal F_y\coloneqq\cal F_{X\times\{y\}}$. For general points $x\in X(k)$ and $y\in Y(k)$ we have $\cal F_x\subseteq\cal O_Y^{\oplus r}$ and $\cal F_y\subseteq\cal E$.
	Since $\cal E$ is $\mu_H$-semistable and $\cal O_Y^{\oplus r}$ is $\mu_A$-semistable, we deduce
	that $\frac{c_1(\cal F)\cdot {\rm pr}_X^*H^{n-1}{\rm pr}_Y^*A^m}{(A^m)\cdot {\rm rk}\,\cal F}=\frac{c_1(\cal F_y)\cdot H^{n-1}}{{\rm rk}\,\cal F}\leq \mu_H(\cal E)$ and $\frac{c_1(\cal F)\cdot {\rm pr}_X^*H^{n}{\rm pr}_Y^*A^{m-1}}{(H^n)\cdot {\rm rk}\,\cal F}=\frac{c_1(\cal F_x)\cdot A^{m-1}}{{\rm rk}\,\cal F}\leq0$.
	Then 
	\begin{multline*}
		\mu_L(\cal F)=\frac{c_1(\cal F)\cdot L^{n+m-1}}{{\rm rk}\,\cal F}=\frac{c_1(\cal F)\cdot \left({{n+m-1}\choose{n-1}}{\rm pr}_X^*H^{n-1}{\rm pr}_Y^*A^m+{{n+m-1}\choose n}{\rm pr}_X^*H^n{\rm pr}_Y^*A^{m-1}\right)}{{\rm rk}\,\cal F}\\ 
		\leq {{n+m-1}\choose{n-1}}(A^m)\cdot \mu_H(\cal E)=\frac{{{n+m-1}\choose{n-1}}\cdot (c_1(\cal E)\cdot H^{n-1})(A^m)}{{\rm rk}\,\cal E}=\mu_L({\rm pr}_X^*\cal E).
	\end{multline*}
If $\cal E$ is $\mu_H$-stable then  $\frac{c_1(\cal F)\cdot {\rm pr}_X^*H^{n-1}{\rm pr}_Y^*A^m}{(A^m)\cdot {\rm rk}\,\cal F}=\frac{c_1(\cal F_y)\cdot H^{n-1}}{{\rm rk}\,\cal F}< \mu_H(\cal E)$ and we get $	\mu_L(\cal F)<\mu_L({\rm pr}_X^*\cal E)$ as required.

Applying the above assertions for slope semistability and slope stability to all Frobenius pull-backs gives immediately the assertions for strong slope semistability and strong slope stability.
\end{proof}

\begin{prop}\label{thm:main-variation}
	Let $X$ be a smooth projective variety of dimension $n$ defined over an algebraically closed field $k$
	and let $H$ be an ample polarization on $X$.
	 Let $\cal E$ be a strongly $\mu_H$-stable torsion free sheaf on $X$ with $c_1 (\cal E)\equiv 0$. Then the following conditions are equivalent:
	\begin{enumerate}
		\item $\cal E$ is reflexive and $c_2(\cal E)\cdot H^{n-2}=0$.
	    \item $\cal E$ is locally free and numerically flat.
		\item  $c_j(\cal E)\equiv 0$ for all $j\ge 1$.
		\item The normalized Hilbert polynomial $\frac 1{{\rm rk}\,\cal E}\cdot\chi(X,\cal E(mH))$ of $\cal E$ is equal to the Hilbert polynomial of $\cal O_X$.
	\end{enumerate} 
\end{prop}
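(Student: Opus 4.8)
The plan is to prove the cycle of implications $(1)\Rightarrow(2)\Rightarrow(3)\Rightarrow(4)\Rightarrow(1)$. Essentially all the difficulty sits in $(1)\Rightarrow(2)$; the other three steps are short once that implication is available (note that $(4)\Rightarrow(1)$ will invoke $(1)\Rightarrow(2)$ applied to $\cal E^{\vee\vee}$, which is legitimate since $\cal E^{\vee\vee}$ is a reflexive sheaf satisfying $(1)$). One may assume $n=\dim X\geq 2$, the case $n=1$ being immediate: on a smooth curve a torsion free sheaf is locally free, $c_1(\cal E)\equiv 0$ means $\deg\cal E=0$, and Lemma \ref{Hartshorne-Burton} identifies strong semistability with nefness, hence with numerical flatness, and Proposition \ref{prop:numflat} gives the triviality of all Chern classes.

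I would first dispose of $(2)\Rightarrow(3)$, which is exactly Proposition \ref{prop:numflat}, and of $(3)\Rightarrow(4)$, which I would obtain from Riemann--Roch on the smooth variety $X$: for any coherent $\cal E$ one has $\chi(X,\cal E(mH))=\int_X\ch(\cal E)\,e^{mH}\,\td X$. Each graded piece $\ch_j(\cal E)$ with $j\geq 1$ is a polynomial without constant term in $c_1(\cal E),\dots,c_j(\cal E)$, hence, by $(3)$, numerically trivial; and the product of a numerically trivial class with any class of complementary dimension has degree zero, because $H^{\bullet}\cap[X]$ spans the relevant piece of $A_*(X)$. Therefore $\int_X(\ch(\cal E)-r)\,e^{mH}\,\td X=0$ for all $m$, which is precisely $\chi(X,\cal E(mH))=r\,\chi(X,\cal O_X(mH))$, i.e.\ $(4)$.

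For the crux $(1)\Rightarrow(2)$ I would induct on $n$. The base case $n=2$ is Proposition \ref{prop:surfacetocurve}: on a smooth surface a reflexive sheaf is locally free and $c_2(\cal E)\cdot H^{n-2}=0$ simply means $c_2(\cal E)\equiv0$. For $n\geq 3$, restrict $\cal E$ to a general member $D\in|mH|$ with $m\gg0$. By the Mehta--Ramanathan restriction theorem (in positive characteristic, by Langer's restriction theorem, cf.\ Lemma \ref{restriction}) the sheaf $\cal E_D$ is again strongly $\mu_{H_D}$-stable, it is reflexive (restriction of a reflexive sheaf to a general member of a basepoint free system), $c_1(\cal E_D)\equiv0$, and by Remark \ref{rmk:chernpullback}(3) and the projection formula $c_2(\cal E_D)\cdot H_D^{\,n-3}=m\,(c_2(\cal E)\cdot H^{n-2})=0$; the inductive hypothesis then makes $\cal E_D$ locally free and numerically flat. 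To descend local freeness to $\cal E$, suppose $p\in\Sing\cal E$ (a locus of codimension $\geq3$ since $\cal E$ is reflexive); choosing $D\in|mH|$ general among divisors through $p$ keeps $\cal E_D$ in the scope of the inductive hypothesis, yet a general hyperplane section drops depth by exactly one at $p$, so $\cal E_D$ is locally free at $p$ if and only if $\cal E$ is — contradicting $\Sing\cal E_D=\emptyset$. Thus $\cal E$ is locally free, and since $c_1(\cal E)\equiv0$ it remains to prove that $\cal E$ is nef, equivalently (using that pulled-back $c_1$ has degree $0$ and Lemma \ref{Hartshorne-Burton}) that $g^*\cal E$ is strongly semistable for every morphism $g\colon C\to X$ from a smooth projective curve. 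Here I would reduce to the surface case: for $\Gamma=g(C)$ produce a smooth projective surface $S$ with a morphism $\phi\colon S\to X$ through which $g$ factors and such that $\phi^*\cal E$ satisfies the hypotheses of Proposition \ref{prop:surfacetocurve} — take $S$ to be a resolution of a general complete intersection surface of $X$ containing $\Gamma$, so that $c_1(\phi^*\cal E)\equiv0$, $\int_S c_2(\phi^*\cal E)$ equals a positive multiple of $c_2(\cal E)\cdot H^{n-2}=0$ by the projection formula, and $\phi^*\cal E$ is strongly slope semistable by the restriction theorems. Then the argument in the proof of Proposition \ref{prop:surfacetocurve}(2) applies verbatim to $\phi^*\cal E$ and forces $g^*\cal E$ to be strongly semistable.

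Finally $(4)\Rightarrow(1)$: comparing the coefficient of $m^{n-2}$ in $\chi(X,\cal E(mH))=r\,\chi(X,\cal O_X(mH))$ and using $c_1(\cal E)\equiv0$ gives $c_2(\cal E)\cdot H^{n-2}=0$. Writing $0\to\cal E\to\cal E^{\vee\vee}\to Q\to0$ with $Q$ supported in codimension $\geq2$, the effectivity of $\ch_2(Q)$ together with the Bogomolov inequality $\Delta(\cal E^{\vee\vee})\cdot H^{n-2}\geq0$ (valid since $\cal E^{\vee\vee}$ is strongly $\mu_H$-stable with $c_1\equiv0$) forces $c_2(\cal E^{\vee\vee})\cdot H^{n-2}=0$ and $\ch_2(Q)\cdot H^{n-2}=0$, so $Q$ is supported in codimension $\geq3$; in particular $\cal E^{\vee\vee}$ satisfies $(1)$, hence by $(1)\Rightarrow(2)\Rightarrow(3)\Rightarrow(4)$ it satisfies $(4)$ with the same rank $r$, and comparison of Hilbert polynomials yields $\chi(X,Q(mH))=0$ for all $m$, whence $Q=0$ and $\cal E=\cal E^{\vee\vee}$ is reflexive. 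The step I expect to be the main obstacle is the surface reduction inside $(1)\Rightarrow(2)$: realizing an arbitrary curve of $X$ inside a smooth surface mapping to $X$ on which the pulled-back bundle is genuinely strongly slope semistable with respect to an \emph{ample} polarization — this forces one to handle singular curves, a resolution step (which makes the natural polarization only big and nef), and the validity of the strong restriction theorem in positive characteristic all at once.
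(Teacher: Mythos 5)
Your overall architecture coincides with the paper's: the cycle $(1)\Rightarrow(2)\Rightarrow(3)\Rightarrow(4)\Rightarrow(1)$, induction on $\dim X$ with Proposition \ref{prop:surfacetocurve} as the base case, and your treatments of $(2)\Rightarrow(3)$, $(3)\Rightarrow(4)$ and $(4)\Rightarrow(1)$ match the paper's. However, $(1)\Rightarrow(2)$ contains two genuine gaps.

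First, the local-freeness descent. For $D\in|mH|$ passing through a point $p\in\Sing\cal E$, the restriction $\cal E_D$ is torsion free but in general \emph{not} reflexive at $p$ (on a smooth surface, for instance, reflexive already means locally free, which is exactly what is in question), so $\cal E_D$ does not satisfy condition $(1)$ of the inductive hypothesis and you cannot invoke $(1)\Rightarrow(2)$ for it. Your depth computation at $p$ is fine --- it is essentially \cite[Lemma 1.14]{Langer19} --- but the input ``$\Sing\cal E_D=\emptyset$'' is unjustified as written. The reason the proposition is phrased as an equivalence of four conditions is precisely that condition $(4)$ is the one that survives restriction to such special divisors: whenever $\imath^*\cal E$ is torsion free one has $\chi(D,\imath^*\cal E(mH))=\chi(X,\cal E(mH))-\chi(X,\cal E((m-1)H))$, which is independent of the choice of $D\in|mH|$ and equals $r\,\chi(D,\cal O_D(mH))$ because this holds for a \emph{general} $D$ by the inductive implication $(1)\Rightarrow(4)$. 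One then applies the inductive $(4)\Rightarrow(1)\Rightarrow(2)$ to the torsion-free, strongly stable $\imath^*\cal E$. You must route through $(4)$; your proposal skips this step.

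Second, the reduction to the surface case for nefness. As you yourself flag, ``a resolution of a general complete intersection surface of $X$ containing $\Gamma$'' does not deliver what you need: a general complete intersection through the possibly singular curve $\Gamma=g(C)$ is only guaranteed smooth away from the base locus, which contains $\Gamma$, and after resolving, the pullback of $H$ is merely big and nef, so Lemma \ref{restriction} (which requires an ample polarization) and strong slope stability of the pullback are no longer available. The paper's way out is to move to $X\times C$: replace $\Gamma$ by the graph $\widetilde C\simeq C$ of $g$, a smooth curve in the smooth variety $X\times C$; Lemma \ref{lem:pulltoproduct} shows ${\rm pr}_X^*\cal E$ is strongly $\mu_L$-stable for the ample class $L={\rm pr}_X^*H+{\rm pr}_C^*A$, a direct computation gives $c_2({\rm pr}_X^*\cal E)\cdot L^{n-1}=0$, and \cite[Theorem 3.1]{Bertini} produces a chain of smooth complete intersections containing $\widetilde C$ down to a smooth surface $S$ with $L_S$ ample, where Lemma \ref{restriction} and Proposition \ref{prop:surfacetocurve}(2) apply. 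This graph trick is the missing idea; without it (or a substitute) the step you identify as ``the main obstacle'' remains open.
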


\begin{proof}
We argue by induction on $n$. If $n=1$, the equivalence of all four conditions is tautological, with $(2)$ being implied by Lemma \ref{Hartshorne-Burton}.
Let us assume that $n=2$. Since  every reflexive sheaf on a smooth surface is locally free, the implication 
$(1)\Rightarrow (2)$ follows from Proposition \ref{prop:surfacetocurve}.
The implication $(2)\Rightarrow (3)$ follows from Proposition \ref{prop:numflat} and $(3)\Rightarrow (4)$ follows from the Hirzebruch--Riemann--Roch theorem.
To prove $(4)\Rightarrow (1)$, consider the exact sequence
$0\to\cal E\to \cal E^{\vee\vee}\to Q\to 0,$
where $Q$ has a finite support.  Since $c_1(\cal E^{\vee\vee})=c_1(\cal E)\equiv 0$ and $\cal E^{\vee\vee}$ is strongly $\mu_H$-stable, the  Bogomolov type inequality (see \cite[Theorem 3.2]{Langer04}) gives $\int _X \Delta(\cal E^{\vee\vee})\geq 0$. Our assumption on the Hilbert polynomial of $\cal E$ and the Hirzebruch--Riemann--Roch theorem imply that $\int _X c_2(E)=0$. Therefore with $r={\rm rk}\,\cal E$
	$$\int _X \Delta(\cal E^{\vee\vee})=2r\int _Xc_2(\cal E^{\vee\vee})=2r\int _X(c_2(\cal E)+c_2(Q))=2r\int _Xc_2(Q)\leq 0,$$ 
which gives $\int _Xc_2(Q)=0$. But then $Q=0$ and $\cal E$ is reflexive.
 
 \medskip
 
Let us now assume that the result holds for varieties of dimension $<n$, where $n\ge 3$. 

$(1)\Rightarrow (2)$

Let $\cal E$ be reflexive, strongly  $\mu_H$-stable, with $c_1(\cal E)$ numerically trivial and $c_2(\cal E)\cdot H^{n-2}=0$.
Let $\imath:D\to X$ be the inclusion of a general member of $|mH|$ for sufficiently large $m$. Then $D$ is smooth of dimension $n-1$. The restriction $\imath^*\cal E$ is still reflexive by the general choice of $D$ and  it is strongly $\mu_{\imath^*H}$-stable by Lemma \ref{restriction}. We also have  $c_2(\imath^*\cal E)\cdot\imath^*H^{n-3}=c_2(\cal E)\cdot H^{n-2}$ by Remark \ref{rmk:chernpullback}. Using the implication $(1)\Rightarrow (4)$ on $D$ we see that 
\begin{align*}
\chi(X, \cal E(mH))- \chi (\cal E ((m-1)H))&= \chi (D, \imath^*\cal E(mH))= r \chi (D, \cal O_D(mH))\\
&=r(\chi(X, \cal O_X(mH))- \chi (\cal O_X ((m-1)H))).
\end{align*}
Now assume that $\cal E$ is not locally free. For sufficiently large $m$, let
$\imath:D\to X$ be an embedding of a member of $|mH|$ that is general among those that pass through one of the points where $\cal E$ is not locally free. By \cite{Bertini} we know that $D$ is smooth. We also know that the restriction $\imath^*\cal E$ is torsion-free.
So  Lemma \ref{restriction} implies that  $\imath^*\cal E$ is strongly $\mu_{\imath^*H}$-stable. By the above, we also know that 
\begin{align*}
 \chi (D, \imath^*\cal E(mH))&= \chi(X, \cal E(mH))- \chi (\cal E ((m-1)H))\\
&=r(\chi(X, \cal O_X(mH))- \chi (\cal O_X ((m-1)H)))=r \chi (D, \cal O_D(mH)).
\end{align*}
Then the induction assumption implies that 
$\imath^*\cal E$ is locally free. By \cite[Lemma 1.14]{Langer19} we deduce that $\cal E$ is locally free around $D$, a contradiction.
Thus $\cal E$ is locally free and we need to prove that it is numerically flat.
Let $f: C\to X$ be a morphism from a smooth projective curve.
Let $A$ be any ample polarization on $C$ and let  $\widetilde C\subset X\times C$ be the embedding of the graph of $C$. Denote by ${\rm pr}_X:X\times C\to X$ and 
${\rm pr}_C:X\times C\to C$ the projections onto the two factors. Let us also set $L\coloneqq{\rm pr}_X^*H+{\rm pr}_C^*A$ and $\widetilde {\cal E}\coloneqq  {\rm pr}_X^*\cal E$. 
Then $\widetilde C\simeq C$ and $f^*\cal E\simeq \widetilde {\cal E}_{\widetilde C}$,
so it is sufficient to check that $\widetilde {\cal E}_{\widetilde C}$ is semistable of degree $0$.   We have $c_2(\widetilde  {\cal E})\cdot L^{n-1}=\sum_{j=0}^{n-1}{{n-1}\choose j}{\rm pr}_X^*(c_2(\widetilde  {\cal E})\cdot H^{n-1-j})\cdot{\rm pr}_C^*A^j$. For dimension reasons, all terms except possibly $j=1$ vanish. The term $j=1$ also vanishes from the assumption $c_2(\cal E)\cdot H^{n-2}=0$. Therefore we have  $c_1(\widetilde {\cal E})\equiv 0$ and $c_2(\widetilde  {\cal E})\cdot L^{n-1}=0$. By Lemma \ref{lem:pulltoproduct} we also know 
that $\widetilde {\cal E}$ is strongly $\mu_L$-stable. 
By  \cite[Theorem 3.1]{Bertini} for large $m$ there exists a chain of smooth 
varieties $S=X_2\subset X_3\subset ...\subset X_n=X\times C$ containing $\widetilde C$ such that all $X_i\in |mL_{X_{i+1}}|$ are smooth. Then  Lemma \ref{restriction} implies that $\widetilde  {\cal E}_S$ is strongly $\mu _{L_S}$-stable with  $c_1 (\widetilde  {\cal E}_S)\equiv 0$
and  $\int_S c_2 (\widetilde  {\cal E}_S)=0$. So the required assertion follows from Proposition  \ref{prop:surfacetocurve}.
\smallskip

$(2)\Rightarrow (3)$ follows from Proposition \ref{prop:numflat}.
\smallskip	
	
$(3)\Rightarrow (4)$ follows from the Hirzebruch--Riemann--Roch theorem.
\smallskip
 
$(4)\Rightarrow (1)$ 
Let $\cal E$ be a rank $r$ torsion free, strongly $\mu_H$-stable sheaf  with $c_1(\cal E)\equiv 0$ and
$\chi (X, \cal E (mH))= r \chi (X, \cal O_X (mH))$ for all $m\in \mathbb Z$. Comparing coefficients of these polynomials at $m^{n-2}$ we see that  $c_2(\cal E)\cdot H^{n-2}=0$. Consider the exact sequence
$$0\to\cal E\to\cal E^{\vee\vee}\to Q\to 0,$$
where $Q$ is a torsion sheaf on $X$.  Then $\cal E^{\vee\vee}$ is reflexive, strongly $\mu_H$-stable sheaf  with $c_1(\cal E^{\vee\vee})\equiv 0$. 
As in the surface case, the Bogomolov type inequality (see \cite[Theorem 3.2]{Langer04}) gives
$$ \Delta(\cal E^{\vee\vee})\cdot H^{n-2}=2r c_2(Q)\cdot H^{n-2}\ge 0,$$ 
which implies $\Delta(\cal E^{\vee\vee})\cdot H^{n-2}=c_2(Q)\cdot H^{n-2}=0$.
Using already proven implication $(1)\Rightarrow (4)$ we see that  Hilbert polynomials of $\cal E$
and $\cal E ^{\vee\vee}$ coincide. Therefore the Hilbert polynomial of $Q$ vanishes. This implies that $Q=0$ and hence  $\cal E$ is reflexive with $c_2(\cal E)\cdot H^{n-2}=0$.
\end{proof}

The following known example shows that reflexivity assumption is necessary in condition $(1)$ of Proposition \ref{thm:main-variation}.

\begin{ex}
Let $X$ be a smooth projective variety of dimension $n\geq 3$. Let $\cal I\subset\cal O_X$ be the ideal sheaf of a nonempty closed subset of codimension $j\geq 3$. Then $\cal I$ is torsion-free, strongly slope semistable with respect to any polarization, and $c_1(\cal I)$ and $c_2(\cal I)$ are numerically trivial. However $c_j(\cal I)$ is not numerically trivial and of course $\cal I$ is not locally free.
\end{ex}

\begin{cor}\label{cor:sstablec1=0}
 Let $\cal E$ be a reflexive strongly $\mu_H$-semistable sheaf  with $c_1(\cal E)\equiv 0$ and $c_2(\cal E)\cdot H^{n-2}=0$. Then $\cal E$ is locally free  and numerically flat.
 Moreover, every factor in a Jordan--H\"older filtration of $\cal E$ is also locally free  and numerically flat.
\end{cor}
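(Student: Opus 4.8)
The plan is to reduce to Proposition \ref{thm:main-variation} by passing to a Jordan--H\"older filtration and controlling discriminants with the Bogomolov inequality; the assertion for $\cal E$ itself then follows from the assertion for the factors, since an iterated extension of locally free numerically flat sheaves is again locally free and numerically flat (local freeness is stable under extensions, extensions of nef sheaves are nef, and $c_1$ is additive). A preliminary reduction brings us to the strongly stable setting: by the discussion in the proof of Lemma \ref{restriction} there is $s_0\ge 0$ so that the factors of a Jordan--H\"older filtration of $(F_X^{s_0})^{*}\cal E$ are strongly $\mu_H$-stable. This pullback is still reflexive, strongly $\mu_H$-semistable, with $c_1\equiv 0$ and $c_2\cdot H^{n-2}=0$ (the Chern classes merely rescale by powers of $p$), and since $F_X^{s_0}$ is finite, flat and surjective, local freeness and nefness of $(F_X^{s_0})^{*}\cal E$ descend to $\cal E$. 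So I may assume the factors $\cal F_1,\dots,\cal F_\ell$ of a Jordan--H\"older filtration of $\cal E$ with respect to $H$ are strongly $\mu_H$-stable of slope $0$.

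The heart of the argument is pinning down the numerical invariants of the $\cal F_i$. Each reflexive hull $\cal F_i^{\vee\vee}$ is strongly $\mu_H$-stable of slope $0$, so Langer's Bogomolov inequality \cite[Theorem 3.2]{Langer04} gives $\Delta(\cal F_i^{\vee\vee})\cdot H^{n-2}\ge 0$, and since $\cal F_i^{\vee\vee}/\cal F_i$ is supported in codimension $\ge 2$ one gets $\Delta(\cal F_i)\cdot H^{n-2}\ge\Delta(\cal F_i^{\vee\vee})\cdot H^{n-2}\ge 0$. On the other hand, additivity of the discriminant along a filtration with factors of equal slope reads
\[\frac{\Delta(\cal E)\cdot H^{n-2}}{r}=\sum_i\frac{\Delta(\cal F_i)\cdot H^{n-2}}{r_i}-\sum_{i<j}\frac{r_i r_j}{r}\Bigl(\tfrac{c_1(\cal F_i)}{r_i}-\tfrac{c_1(\cal F_j)}{r_j}\Bigr)^{2}\!\cdot H^{n-2},\]
where the classes $\tfrac{c_1(\cal F_i)}{r_i}-\tfrac{c_1(\cal F_j)}{r_j}$ are orthogonal to $H^{n-1}$, hence have nonpositive self-intersection against $H^{n-2}$ by the Hodge index theorem. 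The left-hand side vanishes by hypothesis, so every summand vanishes: $\Delta(\cal F_i)\cdot H^{n-2}=\Delta(\cal F_i^{\vee\vee})\cdot H^{n-2}=0$, the torsion quotient $\cal F_i^{\vee\vee}/\cal F_i$ has no codimension-two component (so the non-locally-free locus of $\cal F_i$ has codimension $\ge 3$), and $\bigl(\tfrac{c_1(\cal F_i)}{r_i}-\tfrac{c_1(\cal F_j)}{r_j}\bigr)^{2}\cdot H^{n-2}=0$. Feeding $\sum_i c_1(\cal F_i)=c_1(\cal E)\equiv 0$ back in then yields $c_1(\cal F_i)\cdot H^{n-1}=0$ and $c_1(\cal F_i)^{2}\cdot H^{n-2}=0$ for every $i$.

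Finally I would descend to a general surface section and lift back. Cutting $X$ by $n-2$ general members of $|mH|$ with $m\gg 0$ gives a smooth surface $S$; by Lemma \ref{restriction} the restricted filtration is a Jordan--H\"older filtration of the strongly $\mu_{H_S}$-semistable sheaf $\cal E_S$, and, since the non-locally-free loci of the $\cal F_i$ have codimension $\ge 3$, the restrictions $(\cal F_i)_S$ are locally free and strongly $\mu_{H_S}$-semistable. On the surface $S$ the intersection form is nondegenerate, so $c_1(\cal F_i)\cdot H^{n-1}=0$ together with $c_1(\cal F_i)^{2}\cdot H^{n-2}=0$ forces $c_1((\cal F_i)_S)\equiv 0$, while $\int_S c_2((\cal F_i)_S)=0$ because $\Delta(\cal F_i)\cdot H^{n-2}=0$; similarly $\int_S c_2(\cal E_S)=0$. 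Proposition \ref{prop:surfacetocurve} then makes $(\cal F_i)_S$ and $\cal E_S$ numerically flat on $S$. To pass from all surface sections back to $X$ I would rerun the induction-on-$\dim X$ argument of Proposition \ref{thm:main-variation}: if some $\cal F_i$ (or $\cal E$) were not locally free at a point, a general divisor through that point would carry a torsion-free strongly stable (resp.\ semistable) restriction whose normalized Hilbert polynomial agrees with that of the structure sheaf by the inductive hypothesis, forcing the restriction to be locally free, whence \cite[Lemma 1.14]{Langer19} gives local freeness near the divisor, a contradiction; numerical flatness on $X$ then follows by the pull-to-a-product-and-restrict-to-a-surface device (Lemma \ref{lem:pulltoproduct} together with Proposition \ref{prop:surfacetocurve}, which only needs semistability), exactly as in that proof. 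Closure under extensions delivers the statement for $\cal E$ from the statement for the factors.

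The main obstacle is the middle step: extracting enough numerical information about $c_1(\cal F_i)$ from $\Delta(\cal E)\cdot H^{n-2}=0$ without invoking Hard Lefschetz (whose use the paper is at pains to avoid). The point is that full numerical triviality of $c_1(\cal F_i)$ on $X$ is never needed, only the two identities $c_1(\cal F_i)\cdot H^{n-1}=0$ and $c_1(\cal F_i)^{2}\cdot H^{n-2}=0$, which the discriminant-additivity formula plus the elementary characteristic-free Hodge index theorem on a surface do provide; the full numerical triviality of $c_1$ is then recovered only on the surface sections, where it is actually applied. The remaining difficulties are bookkeeping: the behaviour of the torsion quotients $\cal F_i^{\vee\vee}/\cal F_i$ in codimension two, and verifying that the initial Frobenius reduction is compatible with all of the above in positive characteristic.
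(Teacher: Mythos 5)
Your numerical bookkeeping (Bogomolov plus Hodge index forcing $\Delta(\cal F_i)\cdot H^{n-2}=0$ and the vanishing of the cross terms) matches the paper, but there is a genuine gap at the step where you pass from the factors back to $\cal E$. The Jordan--H\"older quotients $\cal F_i=\cal E_i/\cal E_{i-1}$ are a priori only torsion free, and everything you prove about them only sees codimension $\le 2$: the discriminant computation shows that $\cal F_i^{\vee\vee}/\cal F_i$ has no codimension-$2$ component, and the surface sections $S$ miss the codimension-$\ge 3$ locus entirely. Nothing in your argument rules out $\cal F_i\subsetneq\cal F_i^{\vee\vee}$ with quotient supported in codimension $\ge 3$ (compare the ideal-sheaf example following Proposition \ref{thm:main-variation}: torsion free, strongly stable, $c_1\equiv c_2\equiv 0$, not reflexive). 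Such an $\cal F_i$ is not locally free no matter how good $\cal F_i^{\vee\vee}$ is, so ``closure under extensions'' has nothing to apply to. Your proposed rescue --- rerunning the local-freeness induction of Proposition \ref{thm:main-variation} on $\cal F_i$ or on $\cal E$ --- does not go through: for a divisor $D$ through a prescribed bad point, torsion-freeness of $(\cal F_i)_D$ is only guaranteed when $\cal F_i$ is reflexive, and Lemma \ref{restriction} gives restriction of strong \emph{semi}stability only for \emph{general} $D$, not for $D$ through a chosen point; moreover the Hilbert-polynomial input needed to invoke condition $(4)$ of the proposition on $D$ is exactly the codimension-$\ge 3$ information you are missing.

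The paper closes this gap with a different induction (on rank rather than dimension) and one idea your proposal lacks: writing $0\to\cal E_1\to\cal E\to\cal E_2\to 0$ with $\cal E_1$ stable reflexive and $\cal E_2$ semistable torsion free, one shows $Q=\cal E_2^{\vee\vee}/\cal E_2$ is supported in codimension $\ge 3$, and then an $\Ext$ computation (using that $\cal E_1$ is locally free and $Q$ has codimension $\ge 3$) lifts the extension to one by $\cal E_2^{\vee\vee}$; comparing the two reflexive middle terms, which agree outside a codimension-$\ge 3$ set, forces them to be isomorphic, hence $\cal E_2$ is in fact reflexive and the rank induction applies to it. It is this diagram argument --- which crucially exploits the reflexivity of $\cal E$ itself to constrain the quotient beyond codimension $2$ --- that your plan needs and does not supply. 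Your Frobenius reduction and the derivation of $c_1(\cal F_i)\cdot H^{n-1}=0$, $c_1(\cal F_i)^2\cdot H^{n-2}=0$ (indeed $c_1(\cal F_i)\equiv 0$, via \cite[Example 19.3.3]{fulton84}) are fine as far as they go.
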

This proof is analogous to \cite[Proposition 2.5]{NakayamaNormalized} and
\cite[Section 2.2]{Langer19}.
\begin{proof}
	The statement is clear if $n=1$ so we assume that $n\geq 2$.
	We perform induction on the rank $r$ of $\cal E$. The case $r=1$ or more generally $\cal E$ is strongly $\mu_H$-stable is Proposition \ref{thm:main-variation}.
	Thus we may assume that $\cal E$ is not strongly $\mu_H$-stable. 
	First, let us assume that there exists an exact sequence
	\[0\to\cal E_1\to\cal E\to \cal E_2\to 0\]
	where $\cal E_1$ is $\mu_H$-stable, strongly $\mu_H$-semistable and reflexive and $\cal E_2$ is strongly $\mu_H$-semistable and torsion free, both $\cal E_1$ and $ \cal E_2$ are nonzero sheaves and $\mu_H( \cal E_1)=\mu_H( \cal E_2)=0$.
	By the Hodge index theorem we have
	\begin{align*}
	0&=\frac{\Delta (\cal E)\cdot H^{n-2}}{r}=\frac{\Delta (\cal E_1)\cdot H^{n-2}}{r_1}+\frac{\Delta (\cal E_2)\cdot H^{n-2}}{r_2}-\frac{r_1r_2}{r}\left( \frac{c_1(\cal E_1)}{r_1}-  \frac{c_1(\cal E_2)}{r_2}\right)^2\cdot H^{n-2}\\
&\ge \frac{\Delta (\cal E_1)\cdot H^{n-2}}{r_1}+\frac{\Delta (\cal E_2)\cdot H^{n-2}}{r_2}.
\end{align*}
Using  the Bogomolov type inequality \cite[Theorem 3.2]{Langer04} for $\cal E_1$ and $ \cal E_2$, we see that $\Delta (\cal E_1)\cdot H^{n-2}=\Delta (\cal E_2)\cdot H^{n-2}=0$.
Equality in  the Hodge index inequality implies also that $c_1(\cal E_1)$ and $c_1(\cal E_2)$ are numerically trivial.
The induction hypothesis directly applies only to the reflexive sheaf $\cal E_1$ (not the torsion free $\cal E_2$) and we deduce that it is locally free  and numerically flat. But we also have an exact sequence $$0\to \cal E_2\to {\cal E_2}^{\vee\vee}\to Q\to 0,$$ where $Q$ is supported in codimension at least $2$.  Since ${\cal E_2}^{\vee\vee}$ is also strongly $\mu_H$-semistable,
 $\Delta({\cal E_2}^{\vee\vee})\cdot H^{n-2}\geq 0$. But $\Delta({\cal E_2}^{\vee\vee})\cdot H^{n-2}=2r c_2 (Q)\cdot H^{n-2}\le 0$, so
 $\Delta({\cal E_2}^{\vee\vee})\cdot H^{n-2}=0$ and $Q$ is supported in codimension at least $3$. By the induction hypothesis, ${\cal E_2}^{\vee\vee}$ is locally free and every factor in a Jordan--H\"older filtration of ${\cal E_2}^{\vee\vee}$ is also locally free  and numerically flat. An Ext computation using that $Q$ has codimension at least $3$ shows that we have a commutative diagram
	\[\xymatrix{0\ar[r]&\cal E_1\ar@{=}[d]\ar[r]&\cal E\ar[r]\ar[d]& \cal E_2\ar@{^{(}->}[d]\ar[r]&0\\
		0\ar[r]&\cal E_1\ar[r]&\cal E'\ar[r]&{\cal E_2}^{\vee\vee}\ar[r]&0}\]
	for some sheaf $\cal E'$ that is then necessarily locally free. See \cite[Proposition 2.5]{NakayamaNormalized} or \cite[Lemma 1.12]{Langer19} for details. The middle vertical arrow is an isomorphism since $\cal E$ and $\cal E'$ are both reflexive, and isomorphic on the locally free locus of $\cal E_2$. This implies that $\cal E$ is locally free and $\cal E_2$ is reflexive, so we can apply the induction assumption also  to $\cal E_2$.
	
	To finish the proof one needs to deal with the case when $\cal E$ is $\mu_H$-stable
	but not strongly $\mu_H$-stable. Then we can apply the above arguments for some Frobenius pull-back $(F_X^m)^*\cal E$. Since local freeness  and numerical flatness of $(F_X^m)^*\cal E$ implies local freeness  and numerical flatness of $\cal E$, we get the required assertion.
\end{proof}

\begin{cor}\label{local-freeness}
 Let $\cal E$ be a reflexive strongly $\mu_H$-semistable sheaf of rank $r$ with $\Delta(\cal E)\cdot H^{n-2}=0$. Then $\cal E$ is locally free and $\End \cal E$ is numerically flat.  Moreover, every factor in a Jordan--H\"older filtration of $\cal E$ is also locally free and its endomorphism bundle is numerically flat.
\end{cor}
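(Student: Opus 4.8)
The plan is to reduce everything to Corollary \ref{cor:sstablec1=0}, which handles the case of numerically trivial first Chern class, by passing to endomorphism sheaves, and to establish the local freeness of $\cal E$ itself by a separate induction on $\dim X$.

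First I would consider $\cal A\coloneqq\cal Hom(\cal E,\cal E)$. It is reflexive because $\cal E$ is; since $\cal E$ is strongly $\mu_H$-semistable so is $\cal E^{\vee}$, and hence so is $\cal E^{\vee}\otimes\cal E$ by the tensor product theorem of Ramanan and Ramanathan \cite{RR84} already used in the proof of Proposition \ref{prop:surfacetocurve}; as $\cal A$ coincides with $\cal E^{\vee}\otimes\cal E$ away from a closed set of codimension $\geq 3$, the sheaf $\cal A$ is strongly $\mu_H$-semistable as well. Moreover $c_1(\cal A)\equiv 0$, and since the higher $\operatorname{Tor}$'s of $\cal E^{\vee}$ and $\cal E$ are supported in codimension $\geq 3$ one gets $\ch_2(\cal A)=\ch_2(\cal E^{\vee}\otimes^{\mathbf L}\cal E)=-\Delta(\cal E)$, hence $c_2(\cal A)=\Delta(\cal E)$, and in particular $c_2(\cal A)\cdot H^{n-2}=0$. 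Thus $\cal A$ satisfies exactly the hypotheses of Corollary \ref{cor:sstablec1=0}, so it is locally free and numerically flat, and so is every factor of any Jordan--H\"older filtration of $\cal A$. Combining this with Proposition \ref{prop:numflat} gives $\Delta(\cal E)=c_2(\cal A)\equiv 0$, i.e.\ the \emph{full} numerical triviality of the discriminant. The same reasoning applied to the reflexive double dual of any strongly $\mu_H$-semistable torsion free sheaf $\cal F$ with $\Delta(\cal F)\cdot H^{n-2}=0$ shows that $\Delta(\cal F)\equiv 0$ and that $\cal Hom(\cal F,\cal F)^{\vee\vee}$ is numerically flat.

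It remains to prove that $\cal E$ is locally free; granting this, $\cal A=\cal E\otimes\cal E^{\vee}=\End\cal E$ is precisely the numerically flat bundle found above. For this I would follow the proof of Corollary \ref{cor:sstablec1=0} verbatim, with one change: that proof uses the hypothesis $c_1(\cal E)\equiv 0$ only to propagate it to the Jordan--H\"older pieces in the induction on rank and to feed the base case Proposition \ref{thm:main-variation}. Here instead, given a Jordan--H\"older filtration $0=\cal E_0\subset\cal E_1\subset\dots\subset\cal E_\ell=\cal E$ with factors $G_i=\cal E_i/\cal E_{i-1}$ (automatically torsion free of slope $\mu_H(\cal E)$), the Hodge index step of that proof---the cross terms $\bigl(\tfrac{c_1(G_i)}{r_i}-\tfrac{c_1(G_j)}{r_j}\bigr)^{2}\cdot H^{n-2}$ are $\leq 0$ because those divisor classes are orthogonal to $H^{n-1}$, while $\Delta(G_i)\cdot H^{n-2}\geq 0$ by the Bogomolov inequality \cite[Theorem 3.2]{Langer04}---still forces $\Delta(G_i)\cdot H^{n-2}=0$ for all $i$, which the first paragraph upgrades to $\Delta(G_i)\equiv 0$. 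The base case is then replaced by the assertion that a strongly $\mu_H$-stable torsion free sheaf $G$ with $\Delta(G)\equiv 0$ is locally free, proved by induction on $n$ without tracking Hilbert polynomials: on a surface a non-zero finite length $G^{\vee\vee}/G$ would make $\int_S\Delta(G^{\vee\vee})<0$, contradicting Bogomolov; for $n\geq 3$, if $G$ failed to be locally free at a point $p$, then restricting to a general $D\in|mH|$ through $p$ yields a smooth $D$ (by \cite{Bertini}) with $\imath^*G$ torsion free and strongly $\mu_{\imath^*H}$-stable (by Lemma \ref{restriction}, where $\Delta(G)\equiv 0$ is essential), hence locally free by induction, so $G$ is locally free around $D$ by \cite[Lemma 1.14]{Langer19}, a contradiction. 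Since an extension of a locally free sheaf by a locally free sheaf is locally free, $\cal E$ comes out locally free. Finally each $\End G_i$ is reflexive, strongly $\mu_H$-semistable with $c_1\equiv 0$ and $c_2\cdot H^{n-2}=\Delta(G_i)\cdot H^{n-2}=0$, so it is numerically flat by Corollary \ref{cor:sstablec1=0}, which is the ``moreover'' clause. (As in Corollary \ref{cor:sstablec1=0}, the positive-characteristic case requires passing to sufficiently high Frobenius pullbacks, where the relevant pieces become strongly stable and local freeness descends; these standard reductions I would suppress.)

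The step I expect to be the real obstacle is the passage from ``$\cal End\cal E$ is locally free'' to ``$\cal E$ is locally free'': it is carried out through the Jordan--H\"older factors and the dimension induction above, and what makes that induction legitimate is precisely the upgrade of $\Delta(G_i)\cdot H^{n-2}=0$ to the full numerical triviality $\Delta(G_i)\equiv 0$---indeed Langer's restriction theorem (Lemma \ref{restriction}), the tool that preserves strong stability under restriction to hyperplane sections, is available only for sheaves with numerically trivial discriminant. A secondary, more computational point that still needs care is the identity $c_2(\cal Hom(\cal E,\cal E))=\Delta(\cal E)$ for a reflexive but possibly non-locally-free $\cal E$; it rests on $\cal Hom(\cal E,\cal E)$, $\cal E^{\vee}\otimes\cal E$, and the derived tensor product $\cal E^{\vee}\otimes^{\mathbf L}\cal E$ all agreeing modulo classes supported in codimension $\geq 3$.
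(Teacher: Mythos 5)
Your argument is essentially correct, but it takes a genuinely different route from the paper. The paper reduces to Corollary \ref{cor:sstablec1=0} by a Bloch--Gieseker cover $\varphi:Y\to X$ extracting an $r$-th root $\cal L$ of $\varphi^*\det\cal E$: the twist $\varphi^*\cal E\otimes\cal L^{-1}$ is reflexive, strongly semistable, has trivial determinant and $\Delta\cdot\varphi^*H^{n-2}=0$, so Corollary \ref{cor:sstablec1=0} applies on $Y$ verbatim, and local freeness together with the numerical flatness of the endomorphism bundle descends along the finite flat $\varphi$. You instead apply Corollary \ref{cor:sstablec1=0} directly to ${\cal H}om(\cal E,\cal E)$ (using $c_2({\cal H}om(\cal E,\cal E))=\Delta(\cal E)$ up to classes supported in codimension $\geq 3$, and the Ramanan--Ramanathan tensor product theorem \cite{RR84} for its strong semistability), which yields $\Delta(\cal E)\equiv 0$ and the numerical flatness of the endomorphism sheaf with no cover at all; you then rerun the rank-and-dimension induction of Proposition \ref{thm:main-variation} and Corollary \ref{cor:sstablec1=0} with $\Delta\cdot H^{n-2}=0$ in place of the Chern-class hypotheses to get local freeness of $\cal E$ itself. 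Both work. The paper's reduction buys brevity: nothing is re-proved and no tensor-product theorem is needed at this step. Your route buys the intermediate conclusion $\Delta(\cal E)\equiv 0$ earlier and avoids the covering trick, at the cost of repeating the induction with exactly the bookkeeping you describe (Hodge index forcing $\Delta(G_i)\cdot H^{n-2}=0$ on the Jordan--H\"older factors, the upgrade to $\Delta(G_i)\equiv 0$ so that Lemma \ref{restriction} becomes available, and the Ext/diagram argument restoring reflexivity of the quotients).

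One statement in your sketch is false as written, though it is harmless where you actually use it: a strongly $\mu_H$-stable \emph{torsion free} sheaf $G$ with $\Delta(G)\equiv 0$ need not be locally free once $n\geq 3$. The ideal sheaf $\cal I_p\subset\cal O_{\bb P^3}$ of a point is rank one (hence strongly stable), torsion free, has $c_1\equiv c_2\equiv 0$ and therefore $\Delta\equiv 0$, yet is not locally free; this is precisely the example the paper records after Proposition \ref{thm:main-variation}. Your base case must be stated for \emph{reflexive} stable sheaves, and your own proof of it already secretly requires reflexivity: the restriction of a merely torsion free sheaf to a general divisor through a prescribed singular point can acquire torsion (as it does for $\cal I_p$), whereas the restriction of a reflexive sheaf to a Cartier divisor is always torsion free. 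Since in the rank induction the base case is only ever invoked for reflexive stable pieces (the saturated subsheaf $\cal E_1$ and, recursively, the reflexive data coming from $\cal E_2^{\vee\vee}$), the corrected statement suffices and the rest of your argument goes through.
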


\begin{proof}
	We use a finite cover to extract an $r$-th root of $\det\cal E$ and reduce to the case $c_1(\cal E)\equiv 0$ where Corollary \ref{cor:sstablec1=0} applies. Let $\varphi:Y\to X$ be a Bloch--Gieseker cover (see \cite[Lemma 2.1]{BlochGieseker}), i.e., a finite surjective map from a smooth projective variety $Y$ such that $\varphi^*\det\cal E=\cal L^{\otimes r}$ for some line bundle $\cal L$ on $Y$. 
	The map $\varphi$ is flat so $\varphi^*\cal E$ is reflexive. By Remark \ref{rmk:chernpullback} we also have  $c_i(\varphi^*\cal E)=\varphi^*c_i(\cal E)$ for all $i$. In particular, we have $\det(\varphi^*\cal E\otimes\cal L^{\vee})=\cal O_Y$ and $\Delta(\varphi^*\cal E)\cdot\varphi^*H^{n-2}=0$. Furthermore, $\varphi^*H$ is ample, and $\varphi^*\cal E$ is strongly $\mu_{\varphi^*H}$-semistable. Then $\varphi^*\cal E\otimes\cal L^{-1}$ is reflexive, strongly $\mu_{\phi^*H}$-semistable, with trivial determinant, and $\Delta(\varphi^*\cal E\otimes\cal L^{-1})\cdot\varphi^*H^{n-2}=\Delta(\varphi^*\cal E)\cdot\varphi^*H^{n-2}=0$.
	From the results above we deduce that $\varphi^*\cal E\otimes\cal L^{-1}$ is locally free and numerically flat. Hence   $\varphi^*\End \cal E= \End (\varphi^*\cal E\otimes\cal L^{-1})$ is also numerically flat. This implies that 
 $\cal E$ is locally free and $\End \cal E$ is numerically flat.
 The last part follows analogously. The only difference is that the pull-back of $\mu_H$-stable sheaf need not be $\mu_{\varphi^*H}$-stable and it is only $\mu_{\varphi^*H}$-semistable. So one needs to take a refinement of the pull-back of a Jordan--H\"older filtration of $\cal E$ to 
	a Jordan--H\"older filtration of $\varphi^* \cal E$ and then use Corollary \ref{cor:sstablec1=0}. 
\end{proof}

\subsection{Main theorems in the smooth case}

 \begin{thrm}\label{thm:main3}
	Let $X$ be a smooth projective variety of dimension $n$ defined over an algebraically closed field $k$ and let $H$ be an ample polarization on $X$. Let $\cal E$ be a torsion free sheaf on $X$. Then the following conditions are equivalent:
	\begin{enumerate}
		\item $\cal E$ is reflexive, strongly $\mu_H$-semistable  and $\ch_1(\cal E)\cdot H^{n-1}=\ch_2(\cal E)\cdot H^{n-2}=0$.
		\item $\cal E$ is locally free and numerically flat.
		\item $\cal E$ is strongly $\mu_H$-semistable and $c_j(\cal E)\equiv 0$ for all $j\ge 1$.
		\item $\cal E$ is strongly $\mu_H$-semistable and the normalized Hilbert polynomial of $\cal E$ equals to the Hilbert polynomial of $\cal O_X$.		\end{enumerate}
In particular, if $\cal E$ is a strongly slope semistable vector bundle on $X$ with $c_1(\cal E)\equiv 0$, then $\cal E$ is nef if and only if $\Delta(\cal E)\equiv 0$.
\end{thrm}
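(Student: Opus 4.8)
The plan is to prove the cycle of implications $(1)\Rightarrow(2)\Rightarrow(3)\Rightarrow(4)\Rightarrow(1)$, and then to deduce the final assertion from the equivalence $(1)\Leftrightarrow(2)$ together with Proposition \ref{prop:numflat}; throughout I write $r={\rm rk}\,\cal E$. For $(1)\Rightarrow(2)$ the crucial point is that, for a reflexive strongly $\mu_H$-semistable sheaf, the hypotheses $\ch_1(\cal E)\cdot H^{n-1}=\ch_2(\cal E)\cdot H^{n-2}=0$ already force the stronger vanishing $c_1(\cal E)\equiv 0$ and $c_2(\cal E)\cdot H^{n-2}=0$. Indeed, since $\ch_2=\tfrac{1}{2}(c_1^2-2c_2)$, the vanishing of $\ch_2(\cal E)\cdot H^{n-2}$ gives $\Delta(\cal E)\cdot H^{n-2}=c_1(\cal E)^2\cdot H^{n-2}$; the Bogomolov-type inequality \cite[Theorem 3.2]{Langer04} yields $\Delta(\cal E)\cdot H^{n-2}\ge 0$, whereas the Hodge index theorem, applied to the class $c_1(\cal E)$ which is orthogonal to $H^{n-1}$, gives $c_1(\cal E)^2\cdot H^{n-2}\le 0$ with equality exactly when $c_1(\cal E)\equiv 0$. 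Hence $c_1(\cal E)\equiv 0$; then the hypothesis on $\ch_2$ becomes $c_2(\cal E)\cdot H^{n-2}=0$, and Corollary \ref{cor:sstablec1=0} shows that $\cal E$ is locally free and numerically flat.

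For $(2)\Rightarrow(3)$ I would use that a numerically flat bundle is strongly $\mu_H$-semistable — its Frobenius pullbacks are again numerically flat, and a numerically flat bundle is $\mu_H$-semistable since every torsion free quotient of the nef bundle $\cal E$ is nef and hence of nonnegative degree against $H^{n-1}$, forcing every subsheaf to have nonpositive slope — while the numerical triviality of each $c_j(\cal E)$ is Proposition \ref{prop:numflat}. For $(3)\Rightarrow(4)$, numerically trivial Chern classes yield numerically trivial components $\ch_j(\cal E)$ for all $j\ge 1$ (a monomial in numerically trivial operational classes is numerically trivial, since $A_1(X)$ is generated by classes of curves), so by Hirzebruch--Riemann--Roch $\chi(X,\cal E(mH))=\int_X\ch(\cal E)\exp(mH)\cap\td X=r\int_X\exp(mH)\cap\td X=r\,\chi(X,\cal O_X(mH))$.

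The implication $(4)\Rightarrow(1)$ is the heart of the matter, and I expect it to be the main obstacle. From the leading coefficients of the Hilbert polynomials one reads off $c_1(\cal E)\cdot H^{n-1}=0$, so $\cal E$ is strongly $\mu_H$-semistable of slope $0$. Granting the numerical triviality of $c_1(\cal E)$, the rest follows the pattern of the proof of Proposition \ref{thm:main-variation}: comparing the coefficients of $m^{n-2}$ (the correction term coming from $\td_1$ disappears once $c_1(\cal E)\equiv 0$) gives $c_2(\cal E)\cdot H^{n-2}=0$, and then the exact sequence $0\to\cal E\to\cal E^{\vee\vee}\to Q\to 0$ with $Q$ torsion of codimension $\ge 2$, combined with the Bogomolov inequality for $\cal E^{\vee\vee}$ and the identity $\Delta(\cal E^{\vee\vee})\cdot H^{n-2}=2r\,c_2(Q)\cdot H^{n-2}$, forces $c_2(Q)\cdot H^{n-2}=0$ and, after comparing Hilbert polynomials via the already established $(3)\Rightarrow(4)$, gives $Q=0$; thus $\cal E$ is reflexive and $(1)$ holds. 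To produce the missing input $c_1(\cal E)\equiv 0$ I would restrict to general complete-intersection surfaces $S\subset X$ cut out by large multiples of $H$ (this preserves strong $\mu$-semistability by the Mehta--Ramanathan theorem, cf.\ Lemma \ref{restriction}, and a short exact-sequence argument preserves the equality of normalized Hilbert polynomials), and combine the Bogomolov inequality with the Hodge index theorem on $S$, using a Bloch--Gieseker cover to extract an $r$-th root of $\det\cal E$ exactly as in the proof of Corollary \ref{local-freeness}. Controlling the determinant, i.e.\ promoting $c_1(\cal E)\cdot H^{n-1}=0$ to $c_1(\cal E)\equiv 0$, is the delicate step.

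For the final assertion: if $\cal E$ is a strongly slope semistable vector bundle with $c_1(\cal E)\equiv 0$ and $\Delta(\cal E)\equiv 0$, then $c_2(\cal E)\equiv 0$ as well (because $c_1(\cal E)\equiv 0$ forces $c_1(\cal E)^2\equiv 0$), so $\ch_1(\cal E)\cdot H^{n-1}=\ch_2(\cal E)\cdot H^{n-2}=0$; a vector bundle being reflexive, condition $(1)$ holds, hence $\cal E$ is numerically flat and in particular nef. Conversely, if $\cal E$ is nef with $c_1(\cal E)\equiv 0$ then $\cal E$ is numerically flat, and Proposition \ref{prop:numflat} gives $c_j(\cal E)\equiv 0$ for all $j$, so $\Delta(\cal E)\equiv 0$.
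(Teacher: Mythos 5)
Your handling of $(1)\Rightarrow(2)$, $(2)\Rightarrow(3)$, $(3)\Rightarrow(4)$ and of the final assertion is correct and follows the same route as the paper: Bogomolov against Hodge index to extract $c_1(\cal E)\equiv 0$ and $c_2(\cal E)\cdot H^{n-2}=0$, then Corollary \ref{cor:sstablec1=0}; Proposition \ref{prop:numflat}; Hirzebruch--Riemann--Roch. One small compression in $(1)\Rightarrow(2)$: the Hodge index theorem with equality only gives that $c_1(\cal E)\cdot H^{n-2}$ is numerically trivial as a curve class, and one needs an extra input (the paper quotes \cite[Example 19.3.3]{fulton84}) to upgrade this to $c_1(\cal E)\equiv 0$; both steps are available, so this is not a real gap.

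The genuine gap is $(4)\Rightarrow(1)$, which you explicitly leave open, and your instinct about where the difficulty sits is exactly right: everything reduces to promoting $c_1(\cal E)\cdot H^{n-1}=0$ to $c_1(\cal E)\equiv 0$. However, the repair you sketch cannot succeed. On a general complete intersection surface $S$ the Hilbert polynomial condition only yields $c_1(\cal E_S)\cdot H_S=0$ and $\ch_2(\cal E_S)=\tfrac12\,c_1(\cal E_S)\cdot K_S$, and Bogomolov plus Hodge index then give $r\,c_1(\cal E_S)\cdot K_S\le c_1(\cal E_S)^2\le 0$, which does not force $c_1(\cal E_S)\equiv 0$. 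Indeed, under the literal reading of $(4)$ the implication is false: take $S=C_1\times C_2$ with $g(C_1)=3$, $g(C_2)=2$, fibres $f_1=\{pt\}\times C_2$ and $f_2=C_1\times\{pt\}$, $H=f_1+f_2$, and $\cal E=\cal O_S(f_1-f_2)$. This is a line bundle, hence reflexive and strongly $\mu_H$-stable; since $D=f_1-f_2$ satisfies $D\cdot H=0$ and $D^2=D\cdot K_S=-2$, Riemann--Roch (or K\"unneth) gives $\chi(S,\cal E(mH))=\chi(S,\cal O_S(mH))$ for all $m$, so $(4)$ holds. Yet $D\cdot f_1=-1$, so $c_1(\cal E)$ is not numerically trivial and $\cal E$ is not nef.

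The explanation is that the paper's own one-line proof of $(4)\Rightarrow(1)$ (``analogous to Proposition \ref{thm:main-variation}'') silently imports the standing hypothesis $c_1(\cal E)\equiv 0$ of that proposition, which is not part of condition $(4)$ of the theorem as printed; condition $(4)$ must be read as including the numerical triviality of $c_1(\cal E)$ (or be replaced by a stronger condition in the spirit of $(5)$ of Theorem \ref{main:proper-num-flat}, which tests $\chi(X,\cal E\otimes\cal F)$ against all coherent $\cal F$ and does rule out the example above). Under that corrected reading, the rest of your outline for $(4)\Rightarrow(1)$ --- comparison of the $m^{n-2}$ coefficients, the sequence $0\to\cal E\to\cal E^{\vee\vee}\to Q\to 0$, Bogomolov for $\cal E^{\vee\vee}$, and the already established $(3)\Rightarrow(4)$ to kill $Q$ --- is precisely the paper's argument and is correct. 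So the right conclusion is not that you are missing a trick for deriving $c_1(\cal E)\equiv 0$ from $(4)$, but that no such trick exists and the hypothesis has to be added.
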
 

\begin{proof}
	The conditions $\ch_1(\cal E)\cdot H^{n-1}=0$ and $\ch_2(\cal E)\cdot H^{n-2}=0$ imply that $c_1(\cal E)\cdot H^{n-2}$ is numerically trivial and $\Delta(\cal E)\cdot H^{n-2}=0$ by the Bogomolov inequality and by the Hodge index theorem on surfaces (see \cite[Lemma 4.2]{Langerflat}).  The condition $c_1(\cal E)\cdot H^{n-2}=0$ implies that $c_1(\cal E)$ is numerically trivial by \cite[Example 19.3.3]{fulton84}.
	Therefore $(1)\Rightarrow (2)$ follows from Corollary \ref{cor:sstablec1=0}.
	The implication $(2)\Rightarrow (3)$ follows from Proposition \ref{prop:numflat}. The proofs of implications $(3)\Rightarrow(4)$	and $(4)\Rightarrow(1)$  are analogous to the proofs of the corresponding implications in Proposition \ref{thm:main-variation}. 
	
	If $\cal E$ is a nef vector bundle with $c_1(\cal E)\equiv 0$, then $\cal E$ is numerically flat and in particular $\Delta(\cal E)\equiv0$.
	Conversely, if $\cal E$ is a strongly semistable vector bundle with $c_1(\cal E)\equiv 0$, and $\Delta(\cal E)\equiv 0$, then $\ch_2(\cal E)\equiv 0$ and so $\cal E$ is numerically flat.
\end{proof}

Even in the locally free case, one cannot replace condition $(3)$ in Theorem \ref{thm:main3} with $c_j(\cal E)\cdot H^{n-j}=0$ for all $j\geq 1$, or the condition $\ch_2(\cal E)\cdot H^{n-2}=0$ in (1) with $c_2(\cal E)\cdot H^{n-2}=0$.

\begin{ex}
	Let $X$ be a smooth projective surface of Picard rank at least 3. Let $H,L,L'$ be divisors on $X$ with $H$ ample such that 
	the intersection pairing on ${\rm span}(H,L,L')\subseteq N^1(X)$ has diagonal matrix with respect to the basis $(H,L,L')$.
	Let $\cal E=\cal O_X(L)\oplus \cal O_X(L')$. It is strongly $\mu_H$-semistable. Furthermore, we have $c_1(\cal E)\cdot H=0$ and $\int_X c_2(\cal E)=0$. However, $\cal E$ is not numerically flat since $c_1(\cal E)=L+L'$ is not numerically trivial.
	Note that in this case $\int_X\ch_2(\cal E)<0$.
\end{ex}

\begin{thrm}\label{thm:main4}
Let $X$ be a smooth projective variety of dimension $n$ defined over an algebraically closed field $k$ and let $H$ be an ample polarization on $X$.
Let $\cal E$ be a reflexive sheaf of rank $r$ on $X$. Then the following conditions are equivalent:
	\begin{enumerate}
		\item $\cal E$ is  strongly $\mu_H$-semistable and $\Delta(\cal E)$ is numerically trivial.
		\item $\cal E$ is strongly $\mu_H$-semistable and $\Delta(\cal E)\cdot H^{n-2}=0$.
		\item $\cal E$ is locally free and the twisted normalized bundle $\cal E\langle-\frac 1r\det\cal E\rangle$ is nef.
		\item $\cal E$ is locally free and  $\End \cal E$ is nef.
		\item  For every morphism $f:C\to X$ from a smooth projective curve, $f^*\cal E$ is semistable.
		\item For every morphism $f:Y\to X$, where $Y$ is a smooth projective variety, $f^*\cal E$ is strongly slope semistable with respect to any ample polarization on $Y$.
	\end{enumerate} 
	In particular, if $\cal E$ is strongly $\mu_H$-semistable and $\Delta(\cal E)\cdot H^{n-2}=0$, then $\cal E$ is locally free. Furthermore, it is nef (resp.~ample) if and only if $\det\cal E$ is nef (resp.~ample).
\end{thrm}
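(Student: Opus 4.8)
The plan is to prove the chain $(1)\Rightarrow(2)\Rightarrow(4)\Rightarrow(1)$, the loop $(2)\Rightarrow(3)\Rightarrow(2)$, and the loop $(2)\Rightarrow(6)\Rightarrow(5)\Rightarrow(2)$, with Corollary~\ref{local-freeness} as the main engine. The implication $(1)\Rightarrow(2)$ is immediate. For $(2)\Rightarrow(4)$, Corollary~\ref{local-freeness} says $\cal E$ is locally free with $\End\cal E$ numerically flat, in particular nef. For $(4)\Rightarrow(1)$: $\End\cal E$ is self-dual, so nefness makes it numerically flat, and Proposition~\ref{prop:numflat} then yields $\Delta(\cal E)=c_2(\End\cal E)\equiv0$ (recall $c_1(\End\cal E)=0$ and $c_2(\End\cal E)=\Delta(\cal E)$); moreover a numerically flat bundle, and hence all of its Frobenius pullbacks, are nef and therefore $\mu_H$-semistable, so $\End\cal E$ is strongly $\mu_H$-semistable, and a $\mu_H$-destabilizing $0\neq\cal F\subsetneq\cal E$ would produce the subsheaf $\cal Hom(\cal E,\cal F)\subseteq\End\cal E$ of slope $\mu_H(\cal F)-\mu_H(\cal E)>0$, a contradiction; running this on Frobenius pullbacks shows $\cal E$ is strongly $\mu_H$-semistable. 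For $(2)\Leftrightarrow(3)$ we pass to a Bloch--Gieseker cover $\varphi\colon Y\to X$ with $\varphi^*\det\cal E\cong\cal L^{\otimes r}$, on which the formal twist $\cal E\langle-\frac1r\det\cal E\rangle$ is by definition the honest bundle $\cal G=\varphi^*\cal E\otimes\cal L^{-1}$, whose determinant is numerically trivial: by the proof of Corollary~\ref{local-freeness}, $(2)$ forces $\cal G$ numerically flat, hence nef, which is $(3)$; conversely, if $\cal G$ is nef then, having numerically trivial determinant, it is numerically flat, so $\Delta(\cal G)\equiv0$ by Proposition~\ref{prop:numflat}, whence $\Delta(\cal E)\equiv0$ (twist invariance of $\Delta$ and the projection formula along the finite surjective $\varphi$) and $\cal E$ is strongly $\mu_H$-semistable (a line-bundle twist and descent along $\varphi$ preserve this).

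For $(2)\Rightarrow(6)$: by Corollary~\ref{local-freeness} $\cal E$ is locally free with $\End\cal E$ numerically flat, so for every $f\colon Y\to X$ the bundle $f^*\End\cal E=\End(f^*\cal E)$ is nef, hence numerically flat, hence—exactly as in $(4)\Rightarrow(1)$—$f^*\cal E$ is strongly $\mu_A$-semistable for every ample $A$ on $Y$. Taking $Y$ to be a curve gives $(6)\Rightarrow(5)$.

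The substantial implication is $(5)\Rightarrow(2)$. Restrict $\cal E$ to a general complete-intersection surface $S=D_1\cap\dots\cap D_{n-2}$ with $D_i\in|mH|$, $m\gg0$. Since $\Sing\cal E$ has codimension $\geq3$ in $X$, a general such $S$ misses it, so $\cal E_S$ is a vector bundle on $S$, and composing maps from curves with $S\hookrightarrow X$ shows that $\cal E_S$ satisfies hypothesis $(5)$ on $S$, i.e.\ $\cal E_S$ is universally semistable on $S$. By Corollary~\ref{main:proper-universally-ss} applied to $S$, the bundle $\End(\cal E_S)$ is nef, hence numerically flat; therefore $\cal E_S$ is strongly $\mu_{H_S}$-semistable and, by Proposition~\ref{prop:numflat}, $\Delta(\cal E_S)=c_2(\End(\cal E_S))\equiv0$. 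By the Mehta--Ramanathan theorem, strong $\mu_{H_S}$-semistability of $\cal E_S$ for general $S$ forces $\cal E$ to be strongly $\mu_H$-semistable, while $\Delta(\cal E_S)\equiv0$ for general $S$ gives $\Delta(\cal E)\cdot H^{n-2}=0$ (Remark~\ref{rmk:chernpullback}); Corollary~\ref{local-freeness} now yields $(2)$. The main obstacle is precisely this step: in positive characteristic one must pass from semistability of $f^*\cal E$ for \emph{all} curves to \emph{strong} semistability, and the device is to run the argument on general surface restrictions, where Corollary~\ref{main:proper-universally-ss} (and, underneath it, the Hartshorne--Barton Lemma~\ref{Hartshorne-Burton}) already converts universal curve-semistability into numerical flatness of the endomorphism bundle, instead of trying to control the Frobenius pullbacks of $\cal E$ directly.

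Finally, local freeness in the last assertion is part of $(2)$. For the positivity criterion, note that $\cal E$ is the twist of $\cal G_0:=\cal E\langle-\frac1r\det\cal E\rangle$ by $\frac1r\det\cal E$, and on a Bloch--Gieseker cover $\varphi$ one has $\varphi^*\cal E\cong\varphi^*\cal G_0\otimes\cal L$ with $\varphi^*\cal G_0$ nef (by $(2)\Rightarrow(3)$) and $\cal L^{\otimes r}\cong\varphi^*\det\cal E$. If $\det\cal E$ is nef (resp.\ ample), then so is $\cal L$, hence $\varphi^*\cal G_0\otimes\cal L$ is nef (resp.\ ample, using that a nef bundle tensored with an ample line bundle is ample), and therefore so is $\cal E$ by descent along the finite surjective $\varphi$. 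Conversely, $\det\cal E=\bigwedge^r\cal E$ is a quotient of $\cal E^{\otimes r}$, so it is nef (resp.\ ample) whenever $\cal E$ is. This completes the proof.
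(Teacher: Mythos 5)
Your proposal is correct, and for most of the equivalences it coincides with the paper's argument: $(1)\Rightarrow(2)$ trivially, $(2)\Rightarrow(3)\Leftrightarrow(4)$ via Corollary \ref{local-freeness} and a Bloch--Gieseker cover, $(4)\Rightarrow(1)$ via Proposition \ref{prop:numflat} (your explicit destabilization argument with $\cal Hom(\cal E,\cal F)\subseteq\End\cal E$ is a fine way to recover strong semistability, where the paper instead routes through $(4)\Rightarrow(6)$), and the final nef/ample criterion via the twist identity. The one place where you genuinely diverge is the return implication from $(5)$. You identify the ``main obstacle'' as upgrading semistability on all curves to \emph{strong} semistability in positive characteristic, and you circumvent it by restricting to general complete-intersection surfaces $S$ (which miss the codimension $\geq 3$ singular locus of the reflexive $\cal E$), applying Corollary \ref{main:proper-universally-ss} there, and descending strong semistability and the vanishing of $\Delta$ from $S$ to $X$ by the easy direction of restriction of subsheaves; this is logically sound and non-circular, since Corollary \ref{main:proper-universally-ss} rests on Theorem \ref{main:proper-num-flat} and Theorem \ref{thm:main3}, neither of which uses Theorem \ref{thm:main4}. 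The paper's route is shorter: the obstacle you worry about evaporates because $f\circ F_C^m:C\to X$ is again a morphism from a smooth projective curve, so hypothesis $(5)$ already yields strong semistability of $f^*\cal E$ for every curve for free; combined with Lemma \ref{Hartshorne-Burton} this makes $f^*\End\cal E$ nef on every curve, hence $\End\cal E$ nef, giving $(5)\Rightarrow(4)$ directly (local freeness of $\cal E$ being extracted beforehand from smooth complete-intersection curves through any given point). Your detour through surfaces buys nothing extra here, but it does illustrate that the proper-scheme results of Section~3 already contain the curve-to-strong-semistability upgrade; the Frobenius-precomposition trick is worth internalizing as the one-line alternative.
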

\begin{proof}
	$(1)\Rightarrow(2)$ is trivial. For locally free $\cal E$, the nefness of $\cal E\langle-\frac 1r\det\cal E\rangle$ is equivalent to the nefness of $\varphi^*\cal E\otimes\cal L^{\vee}$, where $\varphi:Y\to X$ is a finite cover as in Corollary \ref{local-freeness}. Since this bundle has trivial determinant, its nefness is equivalent to the nefness (equivalently numerical flatness) of $\cal E{\rm nd}\,\varphi^*\cal E$ and then to that of $\cal E{\rm nd}\,\cal E$. We get the implications $(2)\Rightarrow(3)\Leftrightarrow(4)$ by Corollary \ref{local-freeness}.
	We also have $(4)\Rightarrow(1)$ by Proposition \ref{prop:numflat}.
	 
	 Numerically flat bundles are universally slope semistable, in fact universally strongly slope semistable. We obtain $(4)\Rightarrow(6)\Rightarrow(5)$.
	 Then non-torsion semistable sheaves on smooth projective curves are torsion free, in particular locally free. General complete intersection curves of high degree passing through a given point $x\in X$ are smooth by \cite{Bertini}. Assuming $(5)$, we obtain that $\cal E$ is locally free.  
	By precomposing $f:C\to X$ with iterates of the Frobenius $F_C$, we see that $(5)$ is equivalent to the analogous statement for strong semistability. On $C$, the strong semistability of $f^*\cal E$ is equivalent to the nefness of $f^*\cal E{\rm nd}\,\cal E$. We deduce that $(5)\Rightarrow(4)$.
	
	For the last statements, if $\cal E$ is strongly $\mu_H$-semistable with $\Delta(\cal E)\cdot H^{n-2}=0$, then $\cal E$ is locally free by Corollary \ref{local-freeness}. Clearly if $\cal E$ is nef (resp.~ample), then $\det\cal E$ is nef (resp.~ample). The implication $(2)\Rightarrow(3)$ and the identity $\cal E=\bigl(\cal E\langle-\frac 1r\det\cal E\rangle\bigr)\langle\frac 1r\det\cal E\rangle$ give the converse.
\end{proof}

\begin{rmk}
In the above theorem one can also give another condition analogous to $(3)$ of Theorem \ref{thm:main3}. See \cite[Theorem 2.2]{Langer19} for the precise formulation. We leave an easy proof of this result along the above lines to the interested reader.
\end{rmk}

\begin{rmk}
	If $X$ is a complex projective manifold then homological equivalence over $\bb Q$ implies numerical equivalence. Classically they are known to agree for divisors. Lieberman \cite{Lieberman} also proved it for codimension 2 cycles using the hard Lefschetz theorem. So in this case proving that $\Delta(\cal E)$ is 0 in $H^4(X,\bb Q)$ is equivalent to proving that it is numerically trivial.
\end{rmk}

\section{On Misra's question}

\begin{defn}
	Let $X$ be a projective variety defined over an algebraically closed field $k$. We say that $X$ is \emph{$1$-homogeneous} if $\Nef(X)=\Eff(X)$.
\end{defn}

\noindent Curves, or more generally varieties of Picard rank 1, and homogeneous spaces are $1$-homogeneous. By \cite[Theorem 3.1 and remark on p.~464]{Miyaoka} 
if $\cal E$ is a vector bundle on  a smooth projective curve then $\bb P(\cal E)$ is $1$-homogeneous if and only if $\cal E$ is strongly semistable.

\begin{rmk}\label{rmk:1st1homogeneous}$ $
	\begin{enumerate}[(i)]
		\item If $X$ is a projective variety with Picard number 2 and $A$ and $B$ are globally generated line bundles, but not big, and not proportional in $N^1(X)$, then $X$ is 1-homogeneous and $A$ and $B$ span the boundary rays of $\Nef(X)=\Eff(X)$.
		
		\item If $X$ is projective with Picard number 1 and dimension $n$, and $\cal E$ is a vector bundle of rank $r$ on $X$ such that $\cal E$ can be generated by fewer than $n+r$ global sections, then $\bb P(\cal E)$ is $1$-homogeneous. (We get an induced morphism $f:\bb P(\cal E)\to\bb P^N$ for some $N<\dim\bb P(\cal E)$. In particular $r\geq 2$ and $\bb P(\cal E)$ has Picard rank 2. The fibers of $\pi:\bb P(\cal E)\to X$ are embedded by $f$. Thus if $H$ is a very ample line bundle on $X$, then $\pi^*H$ and $f^*\cal O_{\bb P^N}(1)$ satisfy the requirements of (i).) 
	\end{enumerate}
\end{rmk}

\subsection{Misra's theorem in an arbitrary characteristic}

The following theorem generalizes \cite[Theorem 1.2]{Misra21} to an arbitrary characteristic:

\begin{thrm}\label{Misra}
	Let $X$ be a smooth projective variety defined over an algebraically closed field $k$ and let $\cal E$ be a strongly slope semistable bundle with respect to some ample polarization of $X$. Let us also assume that $\Delta(\cal E)\equiv 0$.
	Then the following conditions are equivalent:
	\begin{enumerate}
		\item $X$ is $1$-homogeneous, 
		\item $c_1 (\pi _* {\cal O} _{\mathbb P (E)} (D))$ is nef for every effective divisor $D$ on $\bb P(\cal E)$,
		\item $\bb P(\cal E)$ is $1$-homogeneous. 
	\end{enumerate}
\end{thrm}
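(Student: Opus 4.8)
\textbf{Proof proposal for Theorem \ref{Misra}.}

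The plan is to prove the cyclic chain of implications $(1)\Rightarrow(2)\Rightarrow(3)\Rightarrow(1)$, using the structural results of the previous section (Corollary \ref{local-freeness} and Theorem \ref{thm:main4}) to pin down the positivity of $\cal E$, and standard facts about the $\bb P^{r-1}$-bundle $\pi:\bb P(\cal E)\to X$ together with the Leray–Hirsch description $N^1(\bb P(\cal E))=\pi^*N^1(X)\oplus\bb R\,\xi$, where $\xi=c_1(\cal O_{\bb P(\cal E)}(1))$. First I would record the key positivity input: since $\cal E$ is strongly $\mu_H$-semistable with $\Delta(\cal E)\equiv 0$, Corollary \ref{local-freeness} shows $\cal E$ is locally free with $\End\cal E$ numerically flat, equivalently (Theorem \ref{thm:main4}) the twisted bundle $\cal E\langle-\tfrac1r\det\cal E\rangle$ is nef; hence $\xi-\tfrac1r\pi^*c_1(\cal E)$ is a nef class on $\bb P(\cal E)$, and likewise $\cal E^\vee\langle\tfrac1r\det\cal E\rangle$ is nef, so $\tfrac1r\pi^*c_1(\cal E)-\xi$ is pseudoeffective (it is represented, up to scaling, by a relative hyperplane class on $\bb P(\cal E^\vee)$ transported over). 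These two facts say that the ray through $\eta:=\xi-\tfrac1r\pi^*c_1(\cal E)$ lies on the boundary of both $\Nef(\bb P(\cal E))$ and $\Eff(\bb P(\cal E))$.

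For $(1)\Rightarrow(2)$: let $D$ be an effective divisor on $\bb P(\cal E)$. Write $[D]=a\xi+\pi^*\delta$ in $N^1(\bb P(\cal E))$ with $a\in\bb Z$, $\delta\in N^1(X)$; since $D$ is effective and not vertical (or, if vertical, equal to $\pi^{-1}$ of an effective divisor on $X$, which is handled directly by $1$-homogeneity of $X$), we have $a\geq 0$. Then $\pi_*\cal O_{\bb P(\cal E)}(D)=\Sym^a\cal E\otimes\cal O_X(\delta)$ when $a\geq 0$, so $c_1(\pi_*\cal O_{\bb P(\cal E)}(D))=\binom{a+r-1}{r}c_1(\cal E)+\binom{a+r-1}{r-1}\delta$. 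Pushing $[D]$ forward along $\pi$ once more (i.e. intersecting with $\xi^{r-2}$ after the relevant dimension count, or more cleanly: $D$ effective forces $D\cdot(\xi-\tfrac1r\pi^*c_1(\cal E))^{r-1}\cdot\pi^*A^{n-1}\geq 0$ for all ample $A$, since $\eta$ is nef) yields $\delta+\tfrac a r c_1(\cal E)$ is a pseudoeffective class on $X$, hence nef since $X$ is $1$-homogeneous; as $c_1(\cal E)$ is itself nef (it is $\tfrac1r c_1$ of a nef bundle — here one uses that $\cal E$ nef would give it, but in general one uses $\det\cal E$ nef iff $\cal E$ nef from Theorem \ref{thm:main4}: actually $c_1(\cal E)\cdot H^{n-1}\geq 0$ by semistability plus $\End\cal E$ numerically flat forces $c_1(\cal E)$ proportional in the relevant sense — I will instead argue directly that $c_1(\pi_*\cal O(D))$ is a nonnegative combination of the two pseudoeffective classes $\delta+\tfrac a r c_1(\cal E)$ and $c_1(\cal E)$), it follows that $c_1(\pi_*\cal O_{\bb P(\cal E)}(D))$ is a nonnegative $\bb R$-combination of pseudoeffective classes on $X$, hence pseudoeffective, hence nef by $(1)$.

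For $(2)\Rightarrow(3)$: let $D$ be an effective divisor on $\bb P(\cal E)$; I must show $[D]$ is nef. Again write $[D]=a\xi+\pi^*\delta$ with $a\geq 0$. Using $\eta=\xi-\tfrac1r\pi^*c_1(\cal E)$ nef, it suffices to show the "horizontal part'' $\pi^*\bigl(\delta+\tfrac a r c_1(\cal E)\bigr)$ is nef on $\bb P(\cal E)$, i.e. that $\delta+\tfrac a r c_1(\cal E)$ is nef on $X$; but $\delta+\tfrac a r c_1(\cal E)=\tfrac1{\binom{a+r-1}{r-1}}c_1(\pi_*\cal O_{\bb P(\cal E)}(D))$ by the formula above (for $a\geq 1$; the case $a=0$ is $D=\pi^*$(effective), giving $\delta$ effective on $X$, and then one reduces to showing $X$ is $1$-homogeneous, which will come out of $(3)\Rightarrow(1)$ being circular — instead, for $a=0$ I will note $c_1(\pi_*\cal O(D))=\delta$ directly), which is nef by $(2)$. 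Hence $[D]=a\eta+\pi^*(\delta+\tfrac a r c_1(\cal E))$ is a sum of nef classes, so nef. This gives $\Eff(\bb P(\cal E))\subseteq\Nef(\bb P(\cal E))$, and the reverse inclusion is automatic for divisors, so $\bb P(\cal E)$ is $1$-homogeneous.

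For $(3)\Rightarrow(1)$: this is the easy direction. If $\bb P(\cal E)$ is $1$-homogeneous, then for any effective divisor $\delta$ on $X$ the pullback $\pi^*\delta$ is effective on $\bb P(\cal E)$, hence nef; intersecting with a curve $C\subseteq X$ (via a section or multisection of $\pi$ over $C$, e.g. pull back a complete intersection curve and push forward) gives $\delta\cdot C\geq 0$, so $\delta$ is nef on $X$; thus $\Eff(X)\subseteq\Nef(X)$ and $X$ is $1$-homogeneous. The main obstacle I anticipate is the bookkeeping in $(1)\Rightarrow(2)$ and $(2)\Rightarrow(3)$: carefully extracting, from the effectivity of an arbitrary divisor $D$ on the projectivization, exactly the pseudoeffectivity statement $\delta+\tfrac a r c_1(\cal E)\in\Eff(X)$, and making sure the edge case $a=0$ (vertical divisors) is threaded consistently through the cycle without circularity — one clean way is to treat the vertical case separately up front, observing $\pi$ has a section over every curve after base change, so the equivalence of $1$-homogeneity of $X$ with nefness-of-all-$\pi^*$(effective) is available independently. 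The nefness of $\eta$ and pseudoeffectivity of $-\eta$ coming from Theorem \ref{thm:main4} and Corollary \ref{local-freeness} is the conceptual heart, and everything else is the linear algebra of the two-dimensional space $\bb R\xi\oplus\bb R\pi^*c_1(\cal E)\subseteq N^1(\bb P(\cal E))$ together with the projection formula $\pi_*\bigl(\xi^{r-1+j}\cdot\pi^*\alpha\bigr)=s_j(\cal E)\cdot\alpha$.
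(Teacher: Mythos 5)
Your architecture is the same as the one the paper ports from Misra: use Corollary \ref{local-freeness} and Theorem \ref{thm:main4} to get that $\cal E$ is locally free with $\eta=\xi-\tfrac 1r\pi^*c_1(\cal E)$ nef, decompose $[D]=a\eta+\pi^*\bigl(\delta+\tfrac ar c_1(\cal E)\bigr)$ with $a\geq 0$, identify $c_1(\pi_*\cal O_{\bb P(\cal E)}(D))$ with the positive multiple $\binom{a+r-1}{r-1}\bigl(\delta+\tfrac ar c_1(\cal E)\bigr)$, and run $(1)\Rightarrow(2)\Rightarrow(3)\Rightarrow(1)$. Your $(2)\Rightarrow(3)$ and $(3)\Rightarrow(1)$ are fine.

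The genuine gap is the central step of $(1)\Rightarrow(2)$: deducing that $\beta\coloneqq\delta+\tfrac ar c_1(\cal E)$ is pseudoeffective on $X$ from the effectivity of $D$. Your ``clean'' version tests $D$ only against $\eta^{r-1}\cdot\pi^*A^{n-1}$ for $A$ ample, which yields $\beta\cdot A^{n-1}\geq 0$ for all ample $A$. That places $\beta$ in the dual of the closed cone generated by complete-intersection curve classes, which in dimension $\geq 3$ strictly contains $\Eff(X)$ in general; it does \emph{not} give pseudoeffectivity. What is needed is the duality between $\Eff(X)$ and the cone of strongly movable curves (BDPP in characteristic zero; \cite[Theorem 2.22]{fl13z} or \cite[Theorem 1.4]{Das} in positive characteristic): one must test $\beta$ against $f_*(A_1\cdots A_{n-1})$ for every birational $f:X'\to X$, which forces you to pull $D$ back to $\bb P(f^*\cal E)$ (where $f^*\cal E$ is still strongly semistable with vanishing discriminant, so the corresponding $\eta'$ is still nef) before intersecting. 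This is exactly the ingredient the paper singles out as requiring a positive-characteristic reference, and it is absent from your argument. Two smaller points: your assertion that $\tfrac1r\pi^*c_1(\cal E)-\xi$ is pseudoeffective on $\bb P(\cal E)$ is false (already for $\cal E=\cal O^{\oplus 2}$ on $\bb P^1$, where $-\xi$ is anti-effective on $\bb P^1\times\bb P^1$), though you only use it as motivation; and your worry about whether $c_1(\cal E)$ is nef is a red herring created by splitting $c_1(\pi_*\cal O(D))$ into $c_1(\cal E)$ and $\delta$ separately --- indeed $c_1(\cal E)$ need not even be pseudoeffective under the hypotheses (take $\cal E$ a negative line bundle) --- but since $c_1(\pi_*\cal O(D))$ is exactly a positive multiple of $\beta$, no second class is needed once $\beta\in\Eff(X)$ is established.
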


\begin{proof}
	We specify the adjustments needed to port the proof of \cite[Theorem 1.2]{Misra21} to positive characteristic.
	Once can use Theorem  \ref{thm:main4} instead of 
	\cite[Theorem 2.1]{Misra21}. Apart from that the implication $(1)\Rightarrow (2)$ uses  
	the fact that symmetric powers of $\cal E$ are semistable. This follows either from 
	the Ramanan--Ramanathan theorem (see the proof of Proposition \ref{prop:surfacetocurve}) or one can use Theorem  \ref{thm:main4} and the fact that symmetric powers of nef bundles are nef.
	
	Finally, the proof of $(1)\Rightarrow (2)$ uses  the duality between the cone of strongly  movable curves and pseudo-effective divisors. This fact also holds in positive characteristic by \cite[Theorem 2.22]{fl13z} (see also \cite[Theorem 1.4]{Das}). The rest of the proof is the same as in \cite{Misra21}.
\end{proof}

\subsection{Syzygy bundle counterexamples to Question \ref{ques:main}}

\begin{defn}
	Let $X$ be a projective variety. Let $\cal V$ be a globally generated vector bundle on $X$. The associated \emph{syzygy bundle} $M_{\cal V}$ is the kernel of the natural evaluation morphism $H^0(X,\cal V)\otimes\cal O_X\overset{{\rm ev}}{\twoheadrightarrow}\cal V$. Put $E_{\cal V}=M_{\cal V}^{\vee}$. This is a globally generated vector bundle. 
\end{defn}

For example the Euler sequence on $\bb P^n$ gives $M_{\cal O_{\bb P^n}(1)}=\Omega_{\bb P^n}(1)$. If $\cal L$ is a globally generated line bundle, and $\varphi:X\to\bb P^N$ is the induced morphism with $\varphi^*\cal O_{\bb P^N}(1)=\cal L$, then $M_{\cal L}=\varphi^*\Omega_{\bb P^N}(1)$ and $E_{\cal L}=\varphi^*T_{\bb P^N}(-1)$.

\begin{rmk}\label{rmk:2nd1homogeneous}
	If $\cal V$ is a globally generated vector bundle on $X$ with $\dim X>{\rm rk}\,\cal V$, then $r={\rm rk}\,E_{\cal V}=h^0(X,\cal V)-{\rm rk}\,\cal V$ and $E_{\cal V}$ is generated by $h^0(X,\cal V)=r+{\rm rk}\,\cal V<\dim X+r$ global sections.
	In particular, if $X$ has Picard rank 1 then $\bb P(E_{\cal V})$ is $1$-homogeneous by Remark \ref{rmk:1st1homogeneous}.
\end{rmk}

\begin{prop}\label{prop:easyexample}
	For $n\geq 2$ we have that
	\begin{enumerate}[(i)]
		\item $\bb P(T_{\bb P^n})$ is $1$-homogeneous.
		\item $T_{\bb P^n}$ is strongly slope semistable with respect to the hyperplane class.
		\item $\Delta(T_{\bb P^n})\neq 0$.
		\item The restriction of $T_{\bb P^n}$ to every line in $\bb P^n$ is unstable.
	\end{enumerate} 
\end{prop}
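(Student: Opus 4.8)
The four assertions are essentially independent; I would dispatch (i), (iii), (iv) quickly and spend the real effort on (ii).

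\emph{Part (i).} By the discussion preceding Remark~\ref{rmk:2nd1homogeneous} one has $T_{\bb P^n}(-1)=E_{\cal O_{\bb P^n}(1)}$, so $\bb P(T_{\bb P^n})=\bb P(T_{\bb P^n}(-1))=\bb P(E_{\cal O_{\bb P^n}(1)})$. Now $\cal O_{\bb P^n}(1)$ is globally generated, $\dim\bb P^n=n>1=\mathrm{rk}\,\cal O_{\bb P^n}(1)$ for $n\ge2$, and $\bb P^n$ has Picard rank $1$, so Remark~\ref{rmk:2nd1homogeneous} applies directly and $\bb P(T_{\bb P^n})$ is $1$-homogeneous. (Equivalently, $\bb P(T_{\bb P^n})$ is the incidence variety of (point, hyperplane) pairs, of Picard rank $2$, whose two projections to $\bb P^n$ and $(\bb P^n)^{\vee}$ are globally generated, not big, and not proportional, so Remark~\ref{rmk:1st1homogeneous}(i) applies.)

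\emph{Part (iii).} From the Euler sequence $0\to\cal O\to\cal O_{\bb P^n}(1)^{\oplus(n+1)}\to T_{\bb P^n}\to0$ one gets $c(T_{\bb P^n})=(1+h)^{n+1}$ with $h$ the hyperplane class, so $c_1(T_{\bb P^n})=(n+1)h$ and $c_2(T_{\bb P^n})=\binom{n+1}{2}h^2$. With $r=\mathrm{rk}\,T_{\bb P^n}=n$,
\[\Delta(T_{\bb P^n})=2rc_2(T_{\bb P^n})-(r-1)c_1^2(T_{\bb P^n})=\bigl(n^2(n+1)-(n-1)(n+1)^2\bigr)h^2=(n+1)h^2,\]
which is nonzero in $N^2(\bb P^n)$ because $n\ge2$.

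\emph{Part (iv).} For any line $\ell\subset\bb P^n$ the subbundle $T_\ell=\cal O_{\bb P^1}(2)\subset T_{\bb P^n}|_\ell$ has slope $2$, while $\deg(T_{\bb P^n}|_\ell)=c_1(T_{\bb P^n})\cdot\ell=n+1$ and $\mathrm{rk}(T_{\bb P^n}|_\ell)=n$, so $\mu(T_{\bb P^n}|_\ell)=(n+1)/n<2$ for $n\ge2$. Hence $T_\ell$ is a destabilizing sub-line-bundle and $T_{\bb P^n}|_\ell$ is unstable. (Restricting the Euler sequence even shows $T_{\bb P^n}|_\ell\cong\cal O_{\bb P^1}(2)\oplus\cal O_{\bb P^1}(1)^{\oplus(n-1)}$.)

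\emph{Part (ii), the main point.} Write $\bb P^n=\bb P(V)=\mathrm{SL}_{n+1}/P$ with $P=\mathrm{Stab}([L])$ for a line $L\subset V$. Then $T_{\bb P^n}$ is the homogeneous bundle $E_\rho$ associated to the $P$-module $\rho=L^{\vee}\otimes(V/L)$ (the tangent space at $[L]$); the unipotent radical of $P$ acts trivially on $\rho$ and the Levi acts by a character tensored with the standard module of its $\mathrm{GL}(V/L)$-factor, so $\rho$ is irreducible. Each Frobenius pullback $(F_{\bb P^n}^m)^*T_{\bb P^n}$ is again $\mathrm{SL}_{n+1}$-homogeneous, equal to $E_{\rho^{(p^m)}}$ where $\rho^{(p^m)}$ is the Frobenius twist of $\rho$ as a $P$-module, and $\rho^{(p^m)}$ is still irreducible (the unipotent radical still acts trivially, and the twist of the standard module of $\mathrm{GL}(V/L)$ is still irreducible). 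The maximal destabilizing subsheaf of $(F_{\bb P^n}^m)^*T_{\bb P^n}$ is canonical, hence $\mathrm{SL}_{n+1}$-invariant and, by connectedness, $\mathrm{SL}_{n+1}$-equivariant; it therefore corresponds to a $P$-submodule of $\rho^{(p^m)}$, which by irreducibility is $0$ or everything. As it is nonzero, it is all of $(F_{\bb P^n}^m)^*T_{\bb P^n}$, i.e., this bundle is $\mu_H$-semistable for every $m$, proving that $T_{\bb P^n}$ is strongly $\mu_H$-semistable. I expect this to be the only genuine obstacle: the two points that need care are that the canonical destabilizing subsheaf inherits the equivariant structure (standard, from uniqueness of the Harder--Narasimhan filtration and connectedness of $\mathrm{SL}_{n+1}$) and that the Frobenius-twisted module $\rho^{(p^m)}$ remains irreducible in characteristic $p$. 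A more hands-on alternative would be to pull back the Euler sequence to identify $(F_{\bb P^n}^m)^*\Omega^1_{\bb P^n}$, up to twist, with the syzygy bundle of the regular sequence $(x_0^{p^m},\dots,x_n^{p^m})$, and invoke the known semistability of these syzygy bundles.
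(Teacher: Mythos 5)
Your proposal is correct, and for parts (i), (iii), (iv) it is essentially identical to the paper's proof: (i) is Remark~\ref{rmk:2nd1homogeneous} applied to $\cal V=\cal O_{\bb P^n}(1)$, (iii) is the same Euler-sequence computation giving $\Delta(T_{\bb P^n})=(n+1)h^2$, and (iv) is the splitting $T_{\bb P^n}|_\ell\cong\cal O(2)\oplus\cal O(1)^{\oplus(n-1)}$ with $\cal O(2)$ destabilizing.

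The one place you genuinely diverge is (ii), where the paper contents itself with the remark that the statement ``is classical and follows, e.g., from the Bott vanishing,'' whereas you supply an actual argument. Your route is the Ramanan--Umemura homogeneous-bundle argument: $T_{\bb P^n}=E_\rho$ for the irreducible $P$-module $\rho=L^{\vee}\otimes(V/L)$, each Frobenius pullback is the homogeneous bundle of the Frobenius twist $\rho^{(p^m)}$ (still irreducible, since the twist only precomposes the action with a map surjective on $k$-points), and the maximal destabilizing subsheaf, being canonical and hence $\mathrm{SL}_{n+1}$-invariant, is an equivariant coherent subsheaf of a bundle on a homogeneous space, so it is a subbundle corresponding to a $P$-submodule, which by irreducibility forces semistability. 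You correctly flag the only two points needing care (equivariance of the Harder--Narasimhan term, irreducibility after twisting); one might add for completeness that an equivariant coherent sheaf on $G/P$ is automatically locally free, which is what lets you pass from the saturated subsheaf to a $P$-submodule of the fibre. This buys a self-contained, characteristic-free proof where the paper only gives a pointer; your alternative via the syzygy bundle of $(x_0^{p^m},\dots,x_n^{p^m})$ is also a standard and valid substitute. No gaps.
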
 
\begin{proof}
	$(i)$. Apply Remark \ref{rmk:2nd1homogeneous} to $\cal V=\cal O_{\bb P^n}(1)$.	
	$(ii)$ is classical and it follows, e.g., from the Bott vanishing.
	$(iii)$. By direct computation, $\Delta(T_{\bb P^n})=\Delta(T_{\bb P^n}(-1))=n+1$. 
	$(iv)$. $T_{\bb P^n}$ restricts as $\cal O(2)\oplus\cal O(1)^{\oplus n-1}$ on every line.
\end{proof}

The easy counterexample above is slope semistable. We also give a slope unstable example inspired in part by suggestions of S.~Misra and D. S.~Nagaraj.

\begin{ex}\label{ex:unstableexample}
	On $\bb P^3$ consider the globally generated bundle $\cal V=\cal O_{\bb P^3}(1)\oplus\cal O_{\bb P^3}(2)$. Consider the associated {syzygy bundle} $M_{\cal V}$ and let $\cal E=E_{\cal V}=M_{\cal V}^{\vee}$. Then $\cal E$ is slope unstable, has positive discriminant, and $\bb P(\cal E)$ is 1-homogeneous. The bundle $\cal E$ has rank 12, $c_1(\cal E)=3$ and $c_2(\cal E)=7$. It is an immediate computation that $\Delta(\cal E)>0$. We have that $M_{\cal V}=M_{\cal O_{\bb P^3}(1)}\oplus M_{\cal O_{\bb P^3}(2)}$. The summands have slopes $-1/3$ and respectively $-2/9$. Thus $M_{\cal V}$ and its dual $\cal E$ are unstable. Since ${\rm rk}\,\cal V=2<\dim\bb P^3$, we get that $\bb P(\cal E)$ is 1-homogeneous by Remark \ref{rmk:2nd1homogeneous}.\qed
\end{ex}

We list related problems asking if our counterexamples are the simplest/smallest possible.

\begin{ques}
	
	\noindent
	\begin{enumerate}
		\item Does there exist a complex projective manifold $X$ of Picard rank 1 and dimension at least 2 supporting an ample and globally generated line bundle $L$ such that the syzygy bundle $M_L$ is $\mu_L$-unstable, but $\bb P(M_L^{\vee})$ is $1$-homogeneous? \footnote{\cite{SchneiderO} constructs an example on curves. The semistability of syzygy bundles is an active topic of research. We refer to \cite{elstable,ELM13,ButlerConjecture} and the references therein for a history of the problem.}
		\item Does there exist a complex projective surface $X$ supporting a slope unstable $\cal E$ such that $\bb P(\cal E)$ is $1$-homogeneous? 
		\item Are there any $\mu_H$-unstable bundles $\cal E$ with $\Delta(\cal E)\cdot H^{n-2}=0$ such that $\bb P(\cal E)$ is $1$-homogeneous?
	\end{enumerate}
\end{ques}

\subsection{A positive result}

\begin{lem}\label{plethysm}
	Let $V$ be a free module of rank $r$ over a commutative ring $k$.
	Then for any $a, b\ge 1$ there exist
	\begin{enumerate}
		\item  a surjection of ${\rm GL}\, (V)$-modules
		$$\Sym^a(\Sym^{b} V)\twoheadrightarrow \Sym^{ab}V,$$
		\item  an inclusion of ${\rm GL}\, (V)$-modules
		$$(\bigwedge^rV)^{\otimes 2a}\hookrightarrow \Sym^r(\Sym^{2a} V).$$
	\end{enumerate}
Moreover, the composition 
$$(\bigwedge^rV)^{\otimes 2a}\hookrightarrow \Sym^r(\Sym^{2a} V)\to \Sym^{2ra}V$$ 
is zero.
\end{lem}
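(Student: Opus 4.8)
The plan is to reduce to, and then prove, the sharper claim that (for $r\ge 2$; the case $r=1$ is a genuine exception, harmless here since then $\Sym^\bullet$ turns every bundle into a line bundle) \emph{any} ${\rm GL}(V)$-equivariant homomorphism $\phi\colon(\bigwedge^rV)^{\otimes 2a}\to\Sym^{2ra}V$ vanishes. The composite in the lemma is of this form: the multiplication map $\Sym^r(\Sym^{2a}V)\to\Sym^{2ra}V$ of part $(1)$ is ${\rm GL}(V)$-equivariant and given by a universal formula over $\mathbb Z$, and so — by construction — is the embedding of part $(2)$; hence their composite commutes with all base changes $k\to R$ and is ${\rm GL}(V)$-equivariant in the functorial sense. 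I would then fix a basis $e_1,\dots,e_r$ of $V$, identify $\Sym^\bullet V=k[e_1,\dots,e_r]$, and note that $(\bigwedge^rV)^{\otimes 2a}$ is free of rank one on $\omega:=(e_1\wedge\dots\wedge e_r)^{\otimes 2a}$ while $\Sym^{2ra}V$ is free on the degree-$2ra$ monomials $e^\beta$. So $\phi$ is determined by $m:=\phi(\omega)\in\Sym^{2ra}V$, and, since $g\cdot(e_1\wedge\dots\wedge e_r)=(\det g)\,(e_1\wedge\dots\wedge e_r)$, equivariance gives $g\cdot m=(\det g)^{2a}\,m$ for every $k$-algebra $R$ and every $g\in{\rm GL}_r(R)$.

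Next I would pin down $m$ in two steps. First, applying the relation above to the diagonal torus: over $R=k[t_1^{\pm 1},\dots,t_r^{\pm 1}]$ with $g={\rm diag}(t_1,\dots,t_r)$ one has $g\cdot e^\beta=t^\beta e^\beta$ and $\det g=t_1\cdots t_r$, and freeness of $\Sym^{2ra}(V\otimes_kR)$ over $k$ on the elements $t^\gamma e^\beta$ forces every monomial appearing in $m$ to have multidegree $(2a,\dots,2a)$; thus $m=c\cdot e_1^{2a}e_2^{2a}\cdots e_r^{2a}$ for some $c\in k$. Second, applying the relation to the transvection $u$ given by $e_1\mapsto e_1+se_2$ and $e_j\mapsto e_j$ for $j\ge 2$, which lies in ${\rm SL}_r(k[s])$ (here $r\ge 2$ is used) and satisfies $\det u=1$, I get $m=u\cdot m$, that is
\[
c\,e_1^{2a}e_2^{2a}\cdots e_r^{2a}=c\,(e_1+se_2)^{2a}\,e_2^{2a}\cdots e_r^{2a}\qquad\text{in }\Sym^{2ra}(V\otimes_k k[s]).
\]
Comparing coefficients of the monomial $e_2^{4a}e_3^{2a}\cdots e_r^{2a}$ — the contribution of the $e_1^0$-term of $(e_1+se_2)^{2a}$, which exists since $2a\ge 2$ — yields $0=c\,s^{2a}$ in $k[s]$, hence $c=0$. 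Therefore $m=0$, so $\phi=0$, and in particular the composite of the lemma is zero.

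The only delicate point is bookkeeping with the base ring: everything hinges on reading ``${\rm GL}(V)$-equivariant'' functorially, so that one may legitimately substitute the generic diagonal matrix over $k[t_i^{\pm 1}]$ and the generic transvection over $k[s]$; this is automatic from the way these plethysm maps are built, and with it in hand the argument is completely elementary and uniform in the characteristic. As a sanity check, for $r=2$, $a=1$ the embedding of part $(2)$ sends the generator to $q_2^2-q_1q_3$, where $q_1,q_2,q_3$ abbreviate $e_1^2,e_1e_2,e_2^2\in\Sym^2V$; the multiplication map then carries this to $(e_1e_2)^2-e_1^2e_2^2=0$ in $\Sym^4V$, exactly the claimed cancellation.
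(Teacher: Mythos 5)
Your argument establishes only the final ``Moreover'' clause; it does not prove part $(2)$, which is the actual content of the lemma. You write that the embedding $(\bigwedge^rV)^{\otimes 2a}\hookrightarrow \Sym^r(\Sym^{2a}V)$ is equivariant ``by construction,'' but no construction is given, and its existence over an arbitrary commutative ring is precisely the nontrivial point: one must exhibit a nonzero equivariant map defined over $\bb Z$ whose image of the generator has a unimodular coefficient, so that the map stays injective (indeed split, hence a subbundle in Corollary \ref{plethysm2}) after every base change --- in particular in positive characteristic, where one cannot invoke semisimplicity or the characteristic-zero plethysm results of \cite{BCI}. The paper does this by an explicit alternating multilinear formula indexed by tableaux of shape $((2a)^r)$ whose columns are permutations of $\{1,\dots,r\}$ (with the first column even, to make the expression symmetric rather than merely alternating up to sign), and then checks that the coefficient of $\prod_{j=1}^r e_j^{2a}$ in the image of $(e_1\wedge\dots\wedge e_r)^{\otimes 2a}$ equals $1$. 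The evenness hypothesis on the exponent $2a$ enters exactly here, and the paper's remark that $\Sym^5(\Sym^3\bb C^5)$ does not contain $(\bigwedge^5\bb C^5)^{\otimes 3}$ shows the existence statement is genuinely delicate. Without part $(2)$ your vanishing statement is vacuous, and the application (that $(\det\cal E)^{\otimes 2m}$ is a subbundle of $\Sym^r(\Sym^{2m}\cal E)$) is lost. Your $r=2$, $a=1$ sanity check ($q_2^2-q_1q_3$) is correct but does not generalize itself.

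For the part you do prove, your route is genuinely different from, and arguably cleaner than, the paper's: the paper verifies the vanishing of the composite by a direct computation reducing to $\sum_{T\in\Sigma}\sgn T=\lvert A_r\rvert\bigl(\sum_{\sigma\in S_r}\sgn\sigma\bigr)^{2a-1}=0$ for $r\ge 2$, whereas you show abstractly that \emph{every} functorially ${\rm GL}(V)$-equivariant map $(\bigwedge^rV)^{\otimes 2a}\to\Sym^{2ra}V$ vanishes when $r\ge 2$, by restricting to the diagonal torus over $k[t_1^{\pm1},\dots,t_r^{\pm1}]$ to pin the image of the generator to the weight space $k\cdot e_1^{2a}\cdots e_r^{2a}$ and then killing it with a transvection over $k[s]$. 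That argument is correct (the two plethysm maps are natural transformations defined over $\bb Z$, so the functorial equivariance you need does hold), it is uniform in the characteristic, and it has the small bonus of making explicit the $r=1$ exception, which the paper leaves implicit. But as submitted, the proposal has a genuine gap at part $(2)$.
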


\noindent Over $\bb C$, assertion $(2)$ is a particular case of the main result of \cite{BCI} which also applies to other even partitions than $\lambda=((2a)^r)$. 

\begin{proof}
	We have a canonical surjection $\bigotimes _{i=1}^a \Sym^{b} V \twoheadrightarrow \Sym^{ab}V$ coming from the symmetric multiplication. 
	By definition we also have a canonical surjection $\bigotimes _{i=1}^a \Sym^{b} V \twoheadrightarrow \Sym^a(\Sym^{b} V)$. Using the universal property of the symmetric product we get an induced map $\Sym^a(\Sym^{b} V)\to \Sym^{ab}V$, which is also surjective.
	This gives the first assertion.
	
	To prove the second assertion we reduce to the case $k=\bb Z$. For $k=\bb Z$ we construct an explicit non-zero map of ${\rm GL}\,(V)$-modules that has an associated matrix with an entry equal to $1$. The map is then a split inclusion as a morphism of $\bb Z$-modules, hence base changing to any commutative ring $k$ is still injective.	
	
	Let $\lambda=((2a)^r)=(2a,...,2a)$ be a partition of $2ar$, i.e., we have a  rectangle of size $(r\times 2a)$. Let $\Sigma$ be the set of all tableaux $T$ of shape $\lambda$  with the entries in $[1,r]=\{1,...,r\}$ so that in each column we have a permutation of the set $[1,r]$ and the first column corresponds to an even  permutation.
	We set $\sgn T=\prod _{i=1}^{2a}\sgn \sigma _i$, where $\sigma _i$ is the permutation of $[1,r]$ corresponding to the $i$-th column of $T$ and $\sgn \sigma$ is the sign of permutation $\sigma$.
	Now we define the map 	
	$$\varphi: \prod _{i=1}^{2a}\left(\prod _{j=1}^r V\right)\to \Sym^r(\Sym^{2a} V)$$
	by setting
	$$\varphi ((v_{11}, ..., v_{r1}), ...,(v_{1,2a}, ..., v_{r,2a}))=\sum _{T\in \Sigma}
	{\sgn T} \prod _{j=1}^r \left( \prod _{i=1}^{2a} v_{T (j, i), i}\right).$$
	Since this map is multilinear it factors to the map
	$$\varphi: (V^{\otimes r})^{\otimes 2a}=\bigotimes _{i=1}^{2a}\left(\bigotimes _{j=1}^r V\right)\to \Sym^r(\Sym^{2a} V)$$
	Note that this last map is alternating in each set of variables $(v_{1m}, ... , v_{rm})$, where $m\in [1, 2a]$. 
	Since we work over $\bb Z$, it is sufficient to check that the corresponding multilinear form is antisymmetric. This is clear for $m>1$ as exchanging $v_{im}$ with $v_{jm}$ defines a bijection on the set $\Sigma$ that replaces the tableau $T$ with another tableau with exchanged entries between $(i,m)$ and $(j,m)$ places. For $m=1$ it follows from the fact that exchanging $v_{i1}$ and $v_{j1}$ defines a bijection on the set $\Sigma$ that replaces 
	the tableau $T$ with another tableau with the same first column but  exchanged $i$-th and $j$-th entries on all of the remaining $2a-1$ columns. This changes the sign with which the corresponding product is taken. Therefore $\varphi$ is antisymmetric also in the variables 
	$(v_{11}, ... , v_{r1})$. This implies that the formula 
	$$\bigotimes_{i=1}^{2a}(v_{1i}\wedge ... \wedge v_{ri})\to \sum _{T\in \Sigma}
	{\sgn T} \prod _{j=1}^r \left( \prod _{i=1}^{2a} v_{T (j, i), i}\right)$$
	defines a map of ${\rm GL}\, (V)$-modules
	$$(\bigwedge^rV)^{\otimes 2a}\to \Sym^r(\Sym^{2a} V).$$
	If $(e_1,...,e_r)$ is a basis of $V$, the element $\bigotimes_{i=1}^{2a}(e_1\wedge ... \wedge e_{r})$ is mapped to 
	$$W=\sum _{T\in \Sigma}{\sgn T}
	\prod _{j=1}^r \left(
	\prod _{i=1}^{2a} e_{T(j,i)}\right).$$
	Note that $\Sym^r(\Sym^{2a} V)$ has a standard basis corresponding to 
	$$\prod _{j=1}^r \left(
	\prod _{i=1}^{r} e_{i}^{n_{ij} } \right), $$
	where $\sum_i n_{ij}=2a$ for $j=1,...,r$.
	If we write $W$ in this basis, the coefficient at the element  $\prod _{j=1}^r e_j^{2a}$ is equal to $1$, so the corresponding map is non-zero.

To see the last part of the lemma, it is sufficient to remark that we have
	$$\sum _{T\in \Sigma}{\sgn T}
\prod _{j=1}^r 
\prod _{i=1}^{2a} e_{T(j,i)}=\sum _{T\in \Sigma}{\sgn T}
\prod _{i=1}^{2a}\prod _{j=1}^r e_{T(j,i)}=(\sum _{T\in \Sigma}{\sgn T})
\prod _{i=1}^{2a}\prod _{j=1}^re_j=0$$
in $ \Sym^{2ra}V$.
\end{proof}

\begin{rmk}
	{\cite[Example 1.9]{Weintraub} shows that the plethysm $\Sym^5(\Sym^3\bb C^5)$ does not contain $(\bigwedge^5\bb C^5)^{\otimes 3}$ as a ${\rm GL}\, (\mathbb C, 5)$-submodule. Thus the parity condition in the above lemma is necessary.}	
\end{rmk}

\begin{cor}\label{plethysm2}
	Let $\cal E$ be a rank $r$ vector bundle on some scheme $X$ defined over some commutative ring $k$. Then  for any $a, b\ge 1$ we have
	\begin{enumerate}
		\item  a canonical surjection 
		$$\Sym^a(\Sym^{b} \cal E)\twoheadrightarrow \Sym^{ab}\cal E,$$
		\item  a canonical inclusion
		$$(\det \cal E)^{\otimes 2a}\hookrightarrow \Sym^r(\Sym^{2a} \cal E)$$
		onto a subbundle.
	\end{enumerate}
Moreover, the composition 
$$(\det \cal E)^{\otimes 2a}\hookrightarrow \Sym^r(\Sym^{2a}  \cal E)\twoheadrightarrow \Sym^{2ra} \cal E$$ 
is zero.
\end{cor}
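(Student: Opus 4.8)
The plan is to deduce the corollary from Lemma \ref{plethysm} by sheafification. All the constructions appearing here -- $\Sym^a(\Sym^b-)$, $\Sym^{ab}-$, $\Sym^r(\Sym^{2a}-)$ and $(\det-)^{\otimes 2a}=(\bigwedge^r-)^{\otimes 2a}$ -- are built from the functors $\otimes$, $\Sym$ and $\bigwedge$, each of which is defined for an arbitrary quasi-coherent sheaf and commutes with arbitrary base change; in particular they take a locally free sheaf of rank $r$ to a locally free sheaf. The morphisms produced in Lemma \ref{plethysm} are constructed over $\bb Z$ (for $(1)$ over an arbitrary base ring) and are ${\rm GL}_r$-equivariant, hence functorial in $V$. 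Concretely, I would cover $X$ by affine opens $U_i=\Spec R_i$ on which $\cal E$ trivializes, fix isomorphisms $\cal E|_{U_i}\cong\cal O_{U_i}^{\oplus r}$, and apply Lemma \ref{plethysm} with $V=R_i^{\oplus r}$. On an overlap $U_i\cap U_j$ the two trivializations differ by a section $g\in{\rm GL}_r\big(\cal O(U_i\cap U_j)\big)$, and ${\rm GL}_r$-equivariance shows that the local maps agree there. Hence they glue to global morphisms of $\cal O_X$-modules; by the same equivariance these are independent of the chosen trivializations, i.e. canonical.

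Given this, part $(1)$ is immediate: surjectivity of $\Sym^a(\Sym^b\cal E)\twoheadrightarrow\Sym^{ab}\cal E$ may be checked on stalks, or after restriction to the residue field at each point, where it reduces to Lemma \ref{plethysm}$(1)$, valid over any ring. Likewise, the vanishing of the composition $(\det\cal E)^{\otimes 2a}\hookrightarrow\Sym^r(\Sym^{2a}\cal E)\twoheadrightarrow\Sym^{2ra}\cal E$ is a local statement, and locally it is exactly the composition shown to be zero at the end of Lemma \ref{plethysm}.

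The only point requiring the full strength of the lemma is the assertion in $(2)$ that $(\det\cal E)^{\otimes 2a}$ is included \emph{onto a subbundle}, i.e. with locally free cokernel; I expect this to be the main (and essentially the only) obstacle. Here the normalization in Lemma \ref{plethysm} is used: over $\bb Z$ with $V=\bb Z^{\oplus r}$, the source $(\bigwedge^rV)^{\otimes 2a}$ is free of rank one, and its generator is sent to an element one of whose coordinates, in the standard monomial basis of $\Sym^r(\Sym^{2a}V)$, equals $1$. Such a map $\bb Z\to\bb Z^{\oplus N}$ is a split injection of $\bb Z$-modules, and split injectivity is preserved by any base change $\bb Z\to R$. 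Applying this with $R=R_i$ for each trivializing open, the restriction $(\det\cal E)^{\otimes 2a}|_{U_i}\to\Sym^r(\Sym^{2a}\cal E)|_{U_i}$ is a split injection with free cokernel; gluing, the global inclusion is injective with locally free cokernel of the expected rank, which is exactly the statement that it realizes $(\det\cal E)^{\otimes 2a}$ as a subbundle.
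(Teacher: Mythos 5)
Your proposal is correct and takes essentially the same route as the paper: the paper packages your local-trivialization-and-gluing argument as the associated vector bundle construction $P(W)$ applied to the ${\rm GL}(V)$-equivariant maps of Lemma \ref{plethysm}, and the subbundle assertion in $(2)$ rests, as in your argument, on the map over $\bb Z$ being a split injection (a matrix entry equal to $1$), so that the cokernel stays free after any base change.
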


\begin{proof}
	The corollary follows immediately from the previous lemma. For the convenience of the reader we recall the idea of proof.
	Let $V$ be a free $k$-module of rank $r$ and let $P\to X$  be the principal ${\rm GL \, }(V)$-bundle associated to $\cal E$. Then for any ${\rm GL }\, (V)$-module $W$ we have the associated vector bundle $P(W)$ and maps of ${\rm GL }\, (V)$-modules induce the corresponding maps of vector bundles. Applying this construction to maps from Lemma \ref{plethysm}, we get the corresponding maps from the corollary.
\end{proof} 

\medskip

When $\cal E$ is a $\mu_H$-semistable bundle on $(X,H)$ such that $\Delta(\cal E)\equiv0$, \cite[Lemma 2.3]{Misra21} observes that $\Sym^m\cal E$ is also $\mu_H$-semistable and $\Delta(\Sym^m\cal E)\equiv0$ for all $m\geq 0$. If furthermore $X$ is $1$-homogeneous, then it follows from \cite[Theorem 1.2]{Misra21} that $\bb P(\Sym^m\cal E)$ is $1$-homogeneous for all $m\geq 0$. Question \ref{ques:main} should also consider $m\geq 1$.

\begin{ex}\label{ex:2p2}
	On $\bb P^2$ consider $\cal E=\Sym^2(T_{\bb P^2}(-1))$. Then $\bb P(\cal E)\simeq{\rm Hilb}^2\bb P^2$. The divisor $E$ of nonreduced length 2 subschemes of $\bb P^2$ is contracted by the birational Hilbert--Chow morphism. In particular, it is effective, but not a nef divisor. If $L=\cal O_{\bb P(\cal E)}(1)$ and $H$ is the pullback of the class of a line in $\bb P^2$ then $E$ is linearly equivalent to $2(L-H)$. See \cite[Section 7.2]{fl13z} for details. 
	
	Another perspective at this example is as follows. 
Since $\cal E$ has rank $2$, Corollary \ref{plethysm2} and comparison of ranks imply that we have a short exact sequence of vector bundles
	$$0\to (\det \cal E)^{\otimes 2}\to\Sym^2(\Sym^{2}  \cal E)\to \Sym^{4} \cal E\to 0.$$ 
This gives a short exact sequence 
\[0\to \cal O_{\bb P^2}\to\bigl(\Sym^2(\Sym^2T_{\bb P^2}(-1))\bigr)(-2)\to \bigl(\Sym^4(T_{\bb P^2}(-1))\bigr)(-2)\to 0.\] 
In particular, we see that $\bigl(\Sym^2(\Sym^2T_{\bb P^2}(-1))\bigr)(-2)$ is effective. However, it is not nef, e.g., because its quotient $\bigl(\Sym^4(T_{\bb P^2}(-1))\bigr)(-2)$ restricts to $\cal O_L(-2)\oplus\cal O_L(-1)\oplus\cal O_L\oplus\cal O_L(1)\oplus\cal O_L(2)$ on every line $L$ in $\mathbb P^2$.  	
\end{ex}

\begin{rmk}\label{eq:condition}
	Let  $\cal E$ is a vector bundle on $X$. Then one can easily see that the following conditions are equivalent:
	\begin{enumerate}
		\item 
		$\bb P(\cal E)$ is $1$-homogeneous
		\item  If $D$ is a divisor on $X$ such that $\bigl(\Sym^m\cal E\bigr)(D)$ is effective for some $m\geq 0$ then $\bigl(\Sym^m\cal E\bigr)(D)$ is nef.
	\end{enumerate} 
\end{rmk}

\begin{thrm}
	Let $X$ be a smooth projective variety defined over an algebraically closed field $k$. Let $\cal E$ be a vector bundle of rank $r$ on $X$. Then 
	$\cal E$ is strongly slope semistable with respect to any ample polarization and $\Delta(\cal E)\equiv 0$ if any of the following conditions hold. 
	\begin{enumerate}
		\item For every $m\geq 0$, we have that $\bb P(\Sym^m\cal E)$ is $1$-homogeneous.
		\item For every divisor $D$ on $X$ and every $m,l\geq 0$, we have that $(\Sym^l(\Sym^m\cal E))(D)$ is effective if and only if it is nef.
		\item There exists $m\geq 1$ such that $\Sym^r(\Sym^{2m}\cal E)\otimes(\det\cal E^{\vee})^{\otimes  2m}$ is nef. 
		\item $\bb P(\cal E^{\otimes r})$ is $1$-homogeneous.
		\item $\bb P(\End\cal E)$ is $1$-homogeneous.
	\end{enumerate}
\end{thrm}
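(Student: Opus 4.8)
The plan is to reduce everything to showing that $\End\cal E$ is nef: once this is known, the implications $(4)\Rightarrow(1)$ and $(4)\Rightarrow(6)$ of Theorem~\ref{thm:main4} (applied with any ample polarization $H$, using that a vector bundle is reflexive) give at once that $\cal E$ is strongly slope semistable with respect to every ample polarization and that $\Delta(\cal E)\equiv 0$. To organize the five hypotheses, note first that $(1)\Leftrightarrow(2)$ is Remark~\ref{eq:condition} applied to the bundles $\Sym^m\cal E$. For $(2)\Rightarrow(3)$, write $\det\cal E=\cal O_X(D_0)$; Corollary~\ref{plethysm2}(2) with $a=1$ exhibits $\cal O_X$ as a subbundle of $\Sym^r(\Sym^2\cal E)\otimes(\det\cal E^\vee)^{\otimes2}$, so this bundle is effective, and hypothesis $(2)$ with $l=r$, $m=2$, $D=-2D_0$ then makes it nef, which is $(3)$ for $m=1$. (The same reasoning shows that $1$-homogeneity of $\bb P(\Sym^{2m}\cal E)$ for a single $m\ge 1$ already yields $(3)$.)

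For $(5)\Rightarrow\End\cal E$ nef: the identity endomorphism is a nowhere-vanishing section of $\End\cal E$, hence a nonzero section of $\cal O_{\bb P(\End\cal E)}(1)$; its zero divisor shows $c_1(\cal O_{\bb P(\End\cal E)}(1))\in\Eff(\bb P(\End\cal E))=\Nef(\bb P(\End\cal E))$, i.e.\ $\End\cal E$ is nef. For $(4)\Rightarrow\End\cal E$ nef: since $\bb P(\cal E^{\otimes r})=\bb P(\cal E^{\otimes r}\otimes\det\cal E^\vee)$, the bundle $\cal G:=\cal E^{\otimes r}\otimes\det\cal E^\vee$ has $\bb P(\cal G)$ $1$-homogeneous. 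The iterated comultiplication of the exterior algebra defines a canonical morphism $\det\cal E=\bigwedge^r\cal E\to\cal E^{\otimes r}$ which at each point sends $e_1\wedge\dots\wedge e_r$ to $\sum_{\sigma\in S_r}(\sgn\sigma)\,e_{\sigma(1)}\otimes\dots\otimes e_{\sigma(r)}\neq 0$; this is a nonzero element of $H^0(X,\cal G)$, so $\cal G$ is effective, hence nef by $1$-homogeneity. Finally, tensoring the canonical surjection $\cal E^{\otimes(r-1)}\twoheadrightarrow\bigwedge^{r-1}\cal E\cong\cal E^\vee\otimes\det\cal E$ with $\cal E$ and then with $\det\cal E^\vee$ yields a surjection $\cal G\twoheadrightarrow\End\cal E$; a quotient of a nef bundle is nef, so $\End\cal E$ is nef.

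It remains to treat $(3)$. By Corollary~\ref{plethysm2}(1) with $a=r$, $b=2m$ there is a surjection $\Sym^r(\Sym^{2m}\cal E)\twoheadrightarrow\Sym^{2rm}\cal E$; twisting by $(\det\cal E^\vee)^{\otimes2m}$ and invoking $(3)$ shows that $\cal N:=\Sym^{2rm}\cal E\otimes(\det\cal E^\vee)^{\otimes2m}=\Sym^{2rm}\!\bigl(\cal E\langle-\tfrac1r\det\cal E\rangle\bigr)$ is nef, and $c_1(\cal N)=0$. For any morphism $f\colon C\to X$ from a smooth projective curve, $f^*\cal N$ is then nef of degree $0$, hence strongly semistable by Lemma~\ref{Hartshorne-Burton}. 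If $f^*\cal E$ were not semistable, a saturated destabilizing subsheaf $\cal H\subseteq f^*\cal E$ (a subbundle, with $\mu(\cal H)>\mu(f^*\cal E)$) would give a subbundle $\Sym^{2rm}\cal H\otimes(\det f^*\cal E^\vee)^{\otimes2m}\subseteq f^*\cal N$ of degree ${\rm rk}(\Sym^{2rm}\cal H)\cdot 2rm\,(\mu(\cal H)-\mu(f^*\cal E))>0$, contradicting semistability of $f^*\cal N$. Hence $f^*\cal E$ is semistable; applying this to $f\circ F_C^k$ for all $k$ shows $f^*\cal E$ is strongly semistable, so $\End(f^*\cal E)=f^*\End\cal E$ is nef, since on a curve strong semistability of a bundle is equivalent to nefness of its endomorphism bundle. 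As nefness of $\End\cal E$ can be checked on curves, $\End\cal E$ is nef, completing the proof.

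The hardest point is this last step: the plethysm inclusion available in $(3)$ constrains $\cal E$ only through $\Sym^{2rm}$, so one must first pass to the surjection onto $\Sym^{2rm}\cal E$ to kill the first Chern class, and then descend semistability from $\Sym^{2rm}$ down to $\cal E$ one curve at a time. The remaining implications are essentially bookkeeping with Corollary~\ref{plethysm2}, Remark~\ref{eq:condition}, and Theorem~\ref{thm:main4}.
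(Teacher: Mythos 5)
Your proposal is correct, and for hypotheses $(1)$, $(2)$, $(4)$ and $(5)$ it runs essentially parallel to the paper's argument: produce a nonzero section of a suitable twist via Corollary~\ref{plethysm2} (or via $\cal O_X\hookrightarrow\End\cal E$, resp.\ $\det\cal E\hookrightarrow\cal E^{\otimes r}$), upgrade effectivity to nefness using $1$-homogeneity, and feed the result into Theorem~\ref{thm:main4}. The genuine divergence is in the descent step for $(3)$ (and, to a lesser extent, $(4)$). The paper passes from nefness of $\Sym^{2rm}\cal E\otimes(\det\cal E^{\vee})^{\otimes 2m}$ to nefness of the normalized bundle $\cal E\langle-\frac 1r\det\cal E\rangle$ by invoking the homogeneity of nefness for twisted bundles (citing \cite[Theorem 6.2.12]{laz042} and \cite{fm19}), and for $(4)$ it quotients to $\Sym^r\cal E\otimes\det\cal E^{\vee}$ and applies the same homogeneity lemma; you instead restrict to curves, apply Lemma~\ref{Hartshorne-Burton} to the degree-zero nef bundle $f^*\cal N$, and rule out a destabilizing subbundle $\cal H\subseteq f^*\cal E$ by exhibiting the positive-degree subsheaf $\Sym^{2rm}\cal H\otimes(\det f^*\cal E^{\vee})^{\otimes 2m}$, while for $(4)$ you surject $\cal E^{\otimes r}\otimes\det\cal E^{\vee}$ directly onto $\End\cal E$. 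Your route is self-contained and avoids the external homogeneity result at the cost of a slightly longer curve-by-curve argument; note also that once you know $f^*\cal E$ is semistable for every $f:C\to X$ you have verified condition $(5)$ of Theorem~\ref{thm:main4} verbatim, so the final detour through nefness of $\End(f^*\cal E)$ could be omitted.
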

\begin{proof}
	The equivalence of $(1)$ and $(2)$ follows from Remark \ref{eq:condition}.
	We focus on $(2)$. By Corollary \ref{plethysm2} for all $m\geq 1$,
	the bundle $\Sym^r(\Sym^{2m}\cal E)$ contains $(\det\cal E)^{\otimes 2m}$.
	Thus $\Sym^r(\Sym^{2m}\cal E)\otimes(\det\cal E^{\vee})^{\otimes 2m}$ is effective and hence it is also nef. Since  Corollary \ref{plethysm2} implies that   $\Sym^{2mr}\cal E
	\otimes(\det\cal E^{\vee})^{\otimes 2m}$ is a quotient of  $\Sym^r(\Sym^{2m}\cal E)\otimes(\det\cal E^{\vee})^{\otimes 2m}$, it is also nef. 
	Since nefness for (twisted) vector bundles is homogeneous (cf.~\cite[Theorem 6.2.12]{laz042}, or \cite[Lemma 3.24 and Remark 3.10]{fm19}), we deduce that $\cal E\langle-\frac 1r\det\cal E^{\vee}\rangle$ is nef. Conclude by Theorem \ref{thm:main}. This argument also handled $(3)$.
	
	$(4)$
	We have a natural inclusion $\det\cal E=\bigwedge^r\cal E \hookrightarrow \cal E^{\otimes r}$. It is obtained by dualizing the natural surjection $(\cal E^{\vee})^{\otimes r}\twoheadrightarrow\bigwedge^r(\cal E^{\vee})$. It shows that  $\cal E^{\otimes r}\otimes\det\cal E^{\vee}$ is effective. By the assumption on the positive cones, it is then also nef. Hence so is its quotient $\Sym^r\cal E\otimes\det\cal E^{\vee}$. Argue as above.
	
	$(5)$ We have a natural inclusion $\cal O_X\hookrightarrow {\cal E}{\rm nd}\, \cal E$ induced by sending $1\in \cal O_X (U)$ to ${\rm id }_{\cal E (U)}$. Therefore 
	${\cal E}{\rm nd}\, \cal E$ is effective and hence
	our assumption implies that it is nef. Now Theorem \ref{thm:main4} implies the required assertion.
\end{proof}

Together with Theorem \ref{Misra} this implies the following result:

\begin{cor}
Let $X$ be a smooth projective variety defined over an algebraically closed field $k$ and let $\cal E$ be a vector bundle of rank $r$ on $X$. Then the following conditions are equivalent:
\begin{enumerate}
\item $\bb P(\Sym^m\cal E)$ is $1$-homogeneous  for every $m\geq 0$.
\item $\bb P(\Sym^{2m}\cal E)$ is $1$-homogeneous  for some $m\geq 1$.
\item $\bb P(\cal E^{\otimes r})$ is $1$-homogeneous.
\item $\bb P(\End\cal E)$ is $1$-homogeneous.
\item The bundle $\End \cal E$ is nef and $X$ is $1$-homogeneous.
\end{enumerate}	
\end{cor}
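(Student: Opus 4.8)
The strategy is to route each of $(1)$--$(4)$ through the immediately preceding theorem, whose conclusion is exactly that $\cal E$ is strongly slope semistable with respect to every ample polarization and $\Delta(\cal E)\equiv 0$. The only feature of $(5)$ that is not part of that conclusion is the $1$-homogeneity of $X$ itself, and Theorem \ref{Misra} is precisely the device that trades this back and forth against the $1$-homogeneity of the relevant projective bundles. So I would prove $(1)\Rightarrow(2)\Rightarrow(5)$, then $(3)\Rightarrow(5)$ and $(4)\Rightarrow(5)$, and finally $(5)\Rightarrow(1),(3),(4)$; together with the obvious $(1)\Rightarrow(2)$ this closes all the equivalences.

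The step $(1)\Rightarrow(2)$ is immediate (take $m=1$). For $(2)\Rightarrow(5)$, assume $\bb P(\Sym^{2m}\cal E)$ is $1$-homogeneous for a single $m\geq 1$. By Remark \ref{eq:condition} applied to the bundle $\Sym^{2m}\cal E$, every effective twist $(\Sym^l(\Sym^{2m}\cal E))(D)$ is nef; I would apply this with $l=r$ and $D$ a divisor representing $(\det\cal E^\vee)^{\otimes 2m}$, using the subbundle inclusion $\cal O_X\hookrightarrow\Sym^r(\Sym^{2m}\cal E)\otimes(\det\cal E^\vee)^{\otimes 2m}$ of Corollary \ref{plethysm2} to conclude that this last bundle is effective, hence nef. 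This is exactly condition $(3)$ of the preceding theorem, so $\cal E$ is strongly slope semistable with respect to every polarization and $\Delta(\cal E)\equiv 0$; by Corollary \ref{local-freeness}, $\End\cal E$ is numerically flat. Consequently $\End(\Sym^{2m}\cal E)$ is again numerically flat (it is the endomorphism bundle of a Schur functor of $\cal E$, up to the formal twist normalizing $\det\cal E$, exactly as in the Bloch--Gieseker reduction in the proof of Corollary \ref{local-freeness}), so Theorem \ref{thm:main4} gives that $\Sym^{2m}\cal E$ is strongly slope semistable with $\Delta(\Sym^{2m}\cal E)\equiv 0$. Since $\bb P(\Sym^{2m}\cal E)$ is $1$-homogeneous, Theorem \ref{Misra} forces $X$ to be $1$-homogeneous, which is $(5)$. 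The implications $(3)\Rightarrow(5)$ and $(4)\Rightarrow(5)$ are the same argument, invoking conditions $(4)$ and $(5)$ of the preceding theorem directly (so that no plethysm input is needed) to obtain strong slope semistability of $\cal E$ with $\Delta(\cal E)\equiv 0$, and then applying Theorem \ref{Misra} to $\cal E^{\otimes r}$, respectively $\End\cal E$.

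For the converse, assume $(5)$. Then $\End\cal E$ is nef, hence numerically flat since it is self-dual, and, by the same Bloch--Gieseker device, $\End(\cal F)$ is numerically flat for $\cal F$ equal to any of $\Sym^m\cal E$ $(m\geq 0)$, $\cal E^{\otimes r}$, or $\End\cal E$. By Theorem \ref{thm:main4}, each such $\cal F$ is then strongly slope semistable with $\Delta(\cal F)\equiv 0$, so, $X$ being $1$-homogeneous, Theorem \ref{Misra} yields that $\bb P(\Sym^m\cal E)$ is $1$-homogeneous for all $m\geq 0$, that $\bb P(\cal E^{\otimes r})$ is $1$-homogeneous, and that $\bb P(\End\cal E)$ is $1$-homogeneous. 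These are $(1)$, $(3)$, $(4)$, and $(1)\Rightarrow(2)$ completes the cycle.

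The proof is mostly bookkeeping once the preceding theorem, Theorem \ref{thm:main4} and Theorem \ref{Misra} are in hand. The one step that is not purely formal is the deduction inside $(2)\Rightarrow(5)$ that $1$-homogeneity of a single $\bb P(\Sym^{2m}\cal E)$ with exponent $2m$ \emph{even} already yields the nefness hypothesis of the preceding theorem; this rests on the subbundle $(\det\cal E)^{\otimes 2m}\hookrightarrow\Sym^r(\Sym^{2m}\cal E)$ of Corollary \ref{plethysm2}, whose parity hypothesis cannot be dropped (cf.\ the remark after Lemma \ref{plethysm}): for odd exponents no such subbundle exists in general and the argument breaks down.
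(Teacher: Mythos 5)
Your proof is correct and follows exactly the route the paper intends: the paper states this corollary as an immediate consequence of the preceding theorem together with Theorem \ref{Misra}, and your argument is a faithful and complete unwinding of that (each of (1)--(4) feeds into a hypothesis of the preceding theorem to yield strong semistability with $\Delta(\cal E)\equiv 0$, after which Theorem \ref{Misra} trades $1$-homogeneity of the projective bundle against that of $X$, and Theorem \ref{thm:main4} supplies the nefness of $\End\cal E$). The one non-formal ingredient you flag --- the even-plethysm subbundle $(\det\cal E)^{\otimes 2m}\hookrightarrow\Sym^r(\Sym^{2m}\cal E)$ needed to pass from a single $1$-homogeneous $\bb P(\Sym^{2m}\cal E)$ to condition (3) of the preceding theorem --- is precisely the ingredient the paper relies on.
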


\bibliographystyle{amsalpha}
\bibliography{References}

\providecommand{\bysame}{\leavevmode\hbox to3em{\hrulefill}\thinspace}
\providecommand{\MR}{\relax\ifhmode\unskip\space\fi MR }
\providecommand{\MRhref}[2]{%
  \href{http://www.ams.org/mathscinet-getitem?mr=#1}{#2}
}
\providecommand{\href}[2]{#2}
\begin{thebibliography}{BPMGNO19}

\bibitem[Bar71]{Barton71}
Charles~M. Barton, \emph{Tensor products of ample vector bundles in
  characteristic {$p$}}, Amer. J. Math. \textbf{93} (1971), 429--438.
  \MR{289525}

\bibitem[BB08]{BBprincipal}
Indranil Biswas and Ugo Bruzzo, \emph{On semistable principal bundles over a
  complex projective manifold}, Int. Math. Res. Not. IMRN (2008), no.~12, Art.
  ID rnn035, 28. \MR{2426752}

\bibitem[BBG19]{BBG19}
Indranil Biswas, Ugo Bruzzo, and Sudarshan Gurjar, \emph{Higgs bundles and
  fundamental group schemes}, Adv. Geom. \textbf{19} (2019), no.~3, 381--388.
  \MR{3982575}

\bibitem[BCI11]{BCI}
Peter B\"{u}rgisser, Matthias Christandl, and Christian Ikenmeyer, \emph{Even
  partitions in plethysms}, J. Algebra \textbf{328} (2011), 322--329.
  \MR{2745569}

\bibitem[BG71]{BlochGieseker}
Spencer Bloch and David Gieseker, \emph{The positivity of the {C}hern classes
  of an ample vector bundle}, Invent. Math. \textbf{12} (1971), 112--117.
  \MR{297773}

\bibitem[BHP14]{BHP14}
Indranil Biswas, Amit Hogadi, and A.~J. Parameswaran, \emph{Pseudo-effective
  cone of {G}rassmann bundles over a curve}, Geom. Dedicata \textbf{172}
  (2014), 69--77. \MR{3253771}

\bibitem[BHR06]{BHHiggs}
U.~Bruzzo and D.~Hern\'{a}ndez~Ruip\'{e}rez, \emph{Semistability vs. nefness
  for ({H}iggs) vector bundles}, Differential Geom. Appl. \textbf{24} (2006),
  no.~4, 403--416. \MR{2231055}

\bibitem[BPMGNO19]{ButlerConjecture}
L.~Brambila-Paz, O.~Mata-Guti\'{e}rrez, P.~E. Newstead, and Angela Ortega,
  \emph{Generated coherent systems and a conjecture of {D}. {C}. {B}utler},
  Internat. J. Math. \textbf{30} (2019), no.~5, 1950024, 25. \MR{3961440}

\bibitem[Das20]{Das}
Omprokash Das, \emph{Finiteness of log minimal models and nef curves on 3-folds
  in characteristic {$p>5$}}, Nagoya Math. J. \textbf{239} (2020), 76--109.
  \MR{4138896}

\bibitem[DH91]{Bertini}
Steven Diaz and David Harbater, \emph{Strong {B}ertini theorems}, Trans. Amer.
  Math. Soc. \textbf{324} (1991), no.~1, 73--86. \MR{986689}

\bibitem[dJ96]{dejong96}
Aise~Johan de~Jong, \emph{Smoothness, semi-stability and alterations}, Inst.
  Hautes \'Etudes Sci. Publ. Math. (1996), no.~83, 51--93.

\bibitem[DPS94]{DPS94}
Jean-Pierre Demailly, Thomas Peternell, and Michael Schneider, \emph{Compact
  complex manifolds with numerically effective tangent bundles}, J. Algebraic
  Geom. \textbf{3} (1994), no.~2, 295--345. \MR{1257325}

\bibitem[DW20]{De-We}
Christopher Deninger and Annette Werner, \emph{Parallel transport for vector
  bundles on {$p$}-adic varieties}, J. Algebraic Geom. \textbf{29} (2020),
  no.~1, 1--52. \MR{4028065}

\bibitem[EL92]{elstable}
Lawrence Ein and Robert Lazarsfeld, \emph{Stability and restrictions of
  {P}icard bundles, with an application to the normal bundles of elliptic
  curves}, Complex projective geometry ({T}rieste, 1989/{B}ergen, 1989), London
  Math. Soc. Lecture Note Ser., vol. 179, Cambridge Univ. Press, Cambridge,
  1992, pp.~149--156. \MR{1201380}

\bibitem[ELM13]{ELM13}
Lawrence Ein, Robert Lazarsfeld, and Yusuf Mustopa, \emph{Stability of syzygy
  bundles on an algebraic surface}, Math. Res. Lett. \textbf{20} (2013), no.~1,
  73--80. \MR{3126723}

\bibitem[FL83]{fl83}
William Fulton and Robert Lazarsfeld, \emph{Positive polynomials for ample
  vector bundles}, Ann. of Math. (2) \textbf{118} (1983), no.~1, 35--60.

\bibitem[FL17a]{flpos}
Mihai Fulger and Brian Lehmann, \emph{Positive cones of dual cycle classes},
  Algebraic Geometry \textbf{4} (2017), no.~1, 1--28.

\bibitem[FL17b]{fl13z}
\bysame, \emph{Zariski decompositions of numerical cycle classes}, J. Algebraic
  Geom. \textbf{26} (2017), no.~1, 43--106. \MR{3570583}

\bibitem[FM21]{fm19}
Mihai Fulger and Takumi Murayama, \emph{Seshadri constants for vector bundles},
  J. Pure Appl. Algebra \textbf{225} (2021), no.~4, 106559, 35. \MR{4158762}

\bibitem[Ful98]{fulton84}
William Fulton, \emph{Intersection theory}, second ed., Ergeb. Math. Grenzgeb.
  (3), vol.~2, Springer-Verlag, Berlin, 1998. \MR{1644323}

\bibitem[Ful11]{ful11}
Mihai Fulger, \emph{Cones of effective cycles on projective bundles over
  curves}, Math. Z. \textbf{269} (2011), no.~1-2, 449--459. \MR{2836078}

\bibitem[Ful20]{FulgerConesvect}
\bysame, \emph{Cones of positive vector bundles}, Rev. Roumaine Math. Pures
  Appl. \textbf{65} (2020), no.~3, 285--302. \MR{4216530}

\bibitem[Gie71]{Gieseker}
David Gieseker, \emph{{$p$}-ample bundles and their {C}hern classes}, Nagoya
  Math. J. \textbf{43} (1971), 91--116. \MR{296078}

\bibitem[GKP16]{GKP16}
Daniel Greb, Stefan Kebekus, and Thomas Peternell, \emph{Movable curves and
  semistable sheaves}, Int. Math. Res. Not. IMRN (2016), no.~2, 536--570.
  \MR{3493425}

\bibitem[Har70]{Har70}
Robin Hartshorne, \emph{Ample subvarieties of algebraic varieties}, Lecture
  Notes in Mathematics, Vol. 156, Springer-Verlag, Berlin-New York, 1970, Notes
  written in collaboration with C. Musili.

\bibitem[Har71]{har71}
\bysame, \emph{Ample vector bundles on curves}, Nagoya Math. J. \textbf{43}
  (1971), 73--89. \MR{292847}

\bibitem[HL10]{HL10}
Daniel Huybrechts and Manfred Lehn, \emph{The geometry of moduli spaces of
  sheaves}, second ed., Cambridge Mathematical Library, Cambridge University
  Press, Cambridge, 2010. \MR{2665168}

\bibitem[JP15]{Joshi-Pauly}
Kirti Joshi and Christian Pauly, \emph{Hitchin-{M}ochizuki morphism, opers and
  {F}robenius-destabilized vector bundles over curves}, Adv. Math. \textbf{274}
  (2015), 39--75. \MR{3318144}

\bibitem[Kle69]{Kleiman69}
Steven~L. Kleiman, \emph{Ample vector bundles on algebraic surfaces}, Proc.
  Amer. Math. Soc. \textbf{21} (1969), 673--676. \MR{251044}

\bibitem[Kle05]{Kleiman-Picard}
\bysame, \emph{The {P}icard scheme}, Fundamental algebraic geometry, Math.
  Surveys Monogr., vol. 123, Amer. Math. Soc., Providence, RI, 2005,
  pp.~235--321. \MR{2223410}

\bibitem[Kol96]{kollarrational}
J\'anos Koll\'ar, \emph{Rational curves on algebraic varieties}, Ergeb. Math.
  Grenzgeb. (3), vol.~32, Springer-Verlag, Berlin, 1996. \MR{1440180}

\bibitem[Lan04]{Langer04}
Adrian Langer, \emph{Semistable sheaves in positive characteristic}, Ann. of
  Math. (2) \textbf{159} (2004), no.~1, 251--276. \MR{2051393}

\bibitem[Lan11]{Langerflat}
\bysame, \emph{On the {S}-fundamental group scheme}, Ann. Inst. Fourier
  (Grenoble) \textbf{61} (2011), no.~5, 2077--2119 (2012). \MR{2961849}

\bibitem[Lan12]{Langerflat2}
\bysame, \emph{On the {S}-fundamental group scheme. {II}}, J. Inst. Math.
  Jussieu \textbf{11} (2012), no.~4, 835--854. \MR{2979824}

\bibitem[Lan15]{Langer15}
\bysame, \emph{Bogomolov's inequality for {H}iggs sheaves in positive
  characteristic}, Invent. Math. \textbf{199} (2015), no.~3, 889--920.
  \MR{3314517}

\bibitem[Lan19]{Langer19}
\bysame, \emph{Nearby cycles and semipositivity in positive characteristic},
  2019, to appear in \emph{J. Eur. Math. Soc.}, arXiv:1902.05745v3 [math.AG].

\bibitem[Lan21]{La-Chern}
\bysame, \emph{On algebraic chern classes of flat vector bundles}, 2021,
  arXiv:2107.03127 [math.AG].

\bibitem[Laz04a]{laz04}
Robert Lazarsfeld, \emph{Positivity in algebraic geometry. {I}}, Ergeb. Math.
  Grenzgeb. (3), vol.~48, Springer-Verlag, Berlin, 2004, Classical setting:
  line bundles and linear series. \MR{2095472}

\bibitem[Laz04b]{laz042}
\bysame, \emph{Positivity in algebraic geometry. {II}}, Ergeb. Math. Grenzgeb.
  (3), vol.~49, Springer-Verlag, Berlin, 2004, Positivity for vector bundles,
  and multiplier ideals. \MR{2095472}

\bibitem[Lie68]{Lieberman}
David~I. Lieberman, \emph{Numerical and homological equivalence of algebraic
  cycles on {H}odge manifolds}, Amer. J. Math. \textbf{90} (1968), 366--374.
  \MR{230336}

\bibitem[LP97]{LePotier97}
J.~Le~Potier, \emph{Lectures on vector bundles}, Cambridge Studies in Advanced
  Mathematics, vol.~54, Cambridge University Press, Cambridge, 1997, Translated
  by A. Maciocia. \MR{1428426}

\bibitem[Mis21]{Misra21}
Snehajit Misra, \emph{Pseudo-effective cones of projective bundles and weak
  {Z}ariski decomposition}, Eur. J. Math. \textbf{7} (2021), no.~4, 1438--1457.
  \MR{4340943}

\bibitem[Miy87]{Miyaoka}
Yoichi Miyaoka, \emph{The {C}hern classes and {K}odaira dimension of a minimal
  variety}, Algebraic geometry, {S}endai, 1985, Adv. Stud. Pure Math., vol.~10,
  North-Holland, Amsterdam, 1987, pp.~449--476. \MR{946247}

\bibitem[Mor98]{Moriwaki98}
Atsushi Moriwaki, \emph{Relative {B}ogomolov's inequality and the cone of
  positive divisors on the moduli space of stable curves}, J. Amer. Math. Soc.
  \textbf{11} (1998), no.~3, 569--600. \MR{1488349}

\bibitem[MR21]{MisraRayAmple}
Snehajit Misra and Nabanita Ray, \emph{On {A}mpleness of vector bundles}, C. R.
  Math. Acad. Sci. Paris \textbf{359} (2021), 763--772. \MR{4311802}

\bibitem[MR82]{MehtaRamanathan}
V.~B. Mehta and A.~Ramanathan, \emph{Semistable sheaves on projective varieties
  and their restriction to curves}, Math. Ann. \textbf{258} (1981/82), no.~3,
  213--224. \MR{649194}

\bibitem[Nak99]{NakayamaNormalized}
Noboru Nakayama, \emph{Normalized tautological divisors of semi-stable vector
  bundles}, no. 1078, 1999, Free resolutions of coordinate rings of projective
  varieties and related topics (Japanese) (Kyoto, 1998), pp.~167--173.
  \MR{1715587}

\bibitem[RR84]{RR84}
S.~Ramanan and A.~Ramanathan, \emph{Some remarks on the instability flag},
  Tohoku Math. J. (2) \textbf{36} (1984), no.~2, 269--291. \MR{742599}

\bibitem[Sch05]{SchneiderO}
Olivier Schneider, \emph{Stabilit\'{e} des fibr\'{e}s {$\Lambda^p E_L$} et
  condition de {R}aynaud}, Ann. Fac. Sci. Toulouse Math. (6) \textbf{14}
  (2005), no.~3, 515--525. \MR{2172589}

\bibitem[Sim92]{Simpson}
Carlos~T. Simpson, \emph{Higgs bundles and local systems}, Inst. Hautes
  \'{E}tudes Sci. Publ. Math. (1992), no.~75, 5--95. \MR{1179076}

\bibitem[Wei90]{Weintraub}
Steven~H. Weintraub, \emph{Some observations on plethysms}, J. Algebra
  \textbf{129} (1990), no.~1, 103--114. \MR{1037395}

\end{thebibliography}

\end{document}